	\newcommand{\blind}{0}
    \renewcommand\section{\@startsection {section}{1}{\z@}%
                                       {-3.5ex \@plus -1ex \@minus -.2ex}%
                                       {2.3ex \@plus.2ex}%
                                       {\normalfont\fontfamily{phv}\fontsize{16}{19}\bfseries}}
    \renewcommand\subsection{\@startsection{subsection}{2}{\z@}%
                                         {-3.25ex\@plus -1ex \@minus -.2ex}%
                                         {1.5ex \@plus .2ex}%
                                         {\normalfont\fontfamily{phv}\fontsize{14}{17}\bfseries}}
    \renewcommand\subsubsection{\@startsection{subsubsection}{3}{\z@}%
                                        {-3.25ex\@plus -1ex \@minus -.2ex}%
                                         {1.5ex \@plus .2ex}%
                                         {\normalfont\normalsize\fontfamily{phv}\fontsize{14}{17}\selectfont}}
	\newtheorem{theorem}{Theorem}[section]
    \newtheorem{lemma}[theorem]{Lemma}
    \newtheorem{proposition}[theorem]{Proposition}
    \newtheorem{corollary}{Corollary}[theorem]
    \theoremstyle{definition}
    \newtheorem{example}{Example}[subsection]
    \newcommand{\dx}{\mathrm{d}}
    \newcommand{\I}{\mathcal{I}}
    \definecolor{darkblue}{RGB}{23, 23, 78}
\begin{document}
		
		\def\spacingset#1{\renewcommand{\baselinestretch}%
			{#1}\small\normalsize} \spacingset{1}
		
		\if0\blind
		{
			\title{\bf {An exact analysis and comparison of manual picker routing heuristics}}
			\author{Tim Engels $^a$, Ivo Adan$^b$, Onno Boxma$^a$ and Jacques Resing $^a$ \\
			\scriptsize{$^a$ Department of Mathematics \& Computer Science, Eindhoven University of Technology, Eindhoven, The Netherlands} \\
			\scriptsize{$^b$ Department of Industrial Engineering \& Innovation Sciences, Eindhoven University of Technology, Eindhoven, The Netherlands}}
			\date{\today}
			\maketitle
		} \fi
		
		\if1\blind
		{

            \title{\bf \emph{IISE Transactions} \LaTeX \ Template}
			\author{Author information is purposely removed for double-blind review}
			
\bigskip
			\bigskip
			\bigskip
			\begin{center}
				{\LARGE\bf \emph{IISE Transactions} \LaTeX \ Template}
			\end{center}
			\medskip
		} \fi
		\bigskip
\begin{abstract}
   This paper presents exact derivations of the first two moments of the total picking time in a warehouse for four routing heuristics, under the assumption of random storage. The analysis is done for general order size distributions and provides formulas in terms of the probability generating function of the order size distribution. These results are used to investigate differences between routing heuristics, order size distributions and warehouse layouts. In specific, we model a warehouse with $c$ pickers as an M/G/c queue to estimate the average order-lead time.
\end{abstract}
	\noindent%
	{\it Keywords:} Warehousing, order picking, mean order lead time.

	\spacingset{1.6}
	
\section{Introduction} \label{sec:Introduction}
Order picking is an essential and expensive process in all warehouses, with estimates of the costs ranging from 50-70\% of all expenses in a warehouse \citep{Bowersox1977, Tompkins2011}.  This paper presents an exact analysis of the performance of routing heuristics in a manual warehouse. In particular, we derive exact expressions for the first two moments of the total picking time for four of the most used picker routing policies, namely return, midpoint, largest gap and S-shaped routing. Manual order picker-to-parts warehouses make up a large percentage of all warehouses in Western Europe. In 2007 this was 80\% according to \citet{DeKoster2007}. Many companies use manual order picking due to the high variability in demand, seasonality in products and lower costs of installment \citep{Petersen2004}, but also because of the flexible nature of humans with respect to changes in the order picking process \citep{Grosse2015,Grosse2017}.\\
A large amount of literature on the optimization of the order picking process has been accumulated, e.g. see the overview in \citet{DeKoster2007}. This optimization is often done by considering one of the three parts of the order picking process: storage, batching and routing \citep{Petersen2004}. But one also considers other options such as the zoning and layout of warehouses. In this paper we focus on the routing problem, i.e. how should a picker be guided through the warehouse. For the analysis we consider four popular routing heuristics, i.e. intuitive ways of routing a picker through a warehouse, and compare these heuristics based on two performance statistics: the \emph{order picking time} and average \emph{order-lead time} (the average time an order spends in the system). For this analysis we consider a single block picker-to-parts warehouse with random storage, that is: each item is randomly assigned a location in the warehouse. This storage policy is widely adopted in practice, because it is straightforward to use, requires less space than other storage methods and improves the usage level of all aisles \citep{Petersen2004}.\\
The analysis of the \emph{order picking time} is based on both average and standard deviation and allows for the analysis of the effects of routing heuristics, lay-out of the warehouse and order size distribution. Optimizing the average \emph{order picking time} is equivalent to optimizing the average travel distance, since a smaller travel distance implies a smaller order picking time. Literature therefore often considers the average route-length, which is frequently approximated \citep[see e.g.][]{Hall1993, Caron1998}. Exact results for the route-length are scarce and given in 
\citet{Dijkstra2017}, yet for a more restrictive order size distribution. In this paper, we build on this by providing exact results for the first two moments of the total picking time for general order sizes under the assumption of random storage. The methods and derivations presented in this paper, however, do form a foundation for an analysis for more general storage policies.\\
The analysis of the average \emph{order-lead time} is done by modeling a warehouse as an M/G/c queue where each order represents a job. We then approximate the average order-lead time by using a well-known two moment approximation of the average waiting time in an M/G/c queue. This modeling and approximation is similar to the work in \cite{Chew1999} and \citet{Le-Duc2007}, in which the authors approximate the second moment of the picking time for the S-shaped policy. We extend this work by considering four routing heuristics and by deriving exact expressions for the second moment of the order picking time and therefore yielding accurate approximations.\\
This paper is organized as follows. After a short literature review in Section \ref{sec:literature}, we discuss the warehouse model in Section \ref{sec:WarehouseModel}, where we also provide formulas for the total picking time. In Section \ref{sec:Prelim} we discuss preliminary results, which we use throughout our derivations; most of these results come from existing literature and are translated to the warehouse model in this section. Section \ref{sec:Results} provides exact expressions for both the first and second moment of the total picking time. In Section \ref{sec:Application} we present numerical results and use these to investigate the effect of different aspects of the warehouse: routing heuristic, the order size distribution and layout of a warehouse. Afterwards, in Section \ref{sec:Queueing}, we give numerical results for the average order-lead time and discuss how the performance measure can affect the choice of routing policy and lay-out. Throughout this paper we defer proofs of the statements to the Appendix, but give the main idea of each statement in the text itself.

\section{Literature review}\label{sec:literature}
The literature on the optimization of warehouses, and in specific order picking, is vast. Comprehensive overviews of the literature can be found in \citet{DeKoster2007, Gu2010}. \citet{Boysen2019} provide a more recent literature overview with a focus on the application to e-commerce warehouses. The optimization of the order picking process is based on the performance statistic that is used. For instance, one can find the storage assignment that minimizes the required space in the warehouse or that minimizes the average order picking route-length. Below we briefly highlight literature on the routing problem, with a focus on the order picking- and order-lead times.\\
The routing problem can be seen as a special traveling salesman problem \citep{H.DonaldRatliff1983} for which an exact optimum can be found quickly \citep{Theys2010}. However, in practice, many manual warehouses tend to use heuristics as discussed in \citet{Hall1993}, since exact optimal solutions may cause confusion amongst pickers as well as errors that ultimately result in large picking times \citep{Gademann2005}.\\
Literature often compares routing heuristics based on the average route-length, since the time spent walking in the warehouse is ``wasted'' time. \citet{Hall1993} already provided approximations for the average route-length in a warehouse. Afterwards this theory has been extended, and is often used to combine warehousing problems. For example, \citet{Rao2013} discuss the average route-length for several storage assignments in case of S-shaped routing, while \citet{Dijkstra2017} find exact route-length formulas for a specific order size distribution and use these results to find optimal zones for class-based storage. Other analyses of route-lengths in combination with other problems can be found in \citet{Hwang2004, Roodbergen2006, Roodbergen2008}.\\
A second statistic for the comparison of routing heuristics is the order-lead time, i.e. the time an order spends in the order picking system. The order-lead time can be seen as the delay that a customer experiences after placing an order, and therefore it is often minimized. For this, one has to consider the stochastic nature of both the order arrivals and order picking time. \citet{Chew1999} use a queueing model to find an optimal batching policy, for which they use approximations of the first and second moment of the total picking time. \citet{Le-Duc2007} build on this research by considering a 2-block warehouse and by performing a more direct analysis of the first and second moments of the total picking time. Higher level performance statistics are the cost of the workers or the percentage of due times reached. \citet{Rijal2021} for instance discuss the scheduling of workers and shifts.

\section{The warehouse model} \label{sec:WarehouseModel}
In this paper we consider a standard warehouse model with a front and back cross-aisle and $k$ storage aisles of length $l$. Both cross-aisles do not give access to storage, yet do allow the picker to move from one aisle to another. We assume there is a combined start- and endpoint where pickers receive the order list and deliver the picked orders, indicated as I/O. Furthermore, we assume that items have continuous locations within the aisles and that a picker can reach items both left and right without moving. The locations of items are assumed to be uniformly distributed amongst and within aisles. Lastly, we assume that the distance between the middle of two aisles is $w_a$ for all aisles and that a picker walks with a fixed speed $v$ and takes a randomly distributed time $P$ to pick an arbitrary item, independent of the location of the item as well as the picking time of other items.

\subsection{Routing heuristics}
Warehouses often deploy routing heuristics to provide shorter route lengths, without causing much confusion amongst the pickers. In this paper we discuss four routing heuristics: \emph{return}, \emph{midpoint}, \emph{largest gap} and \emph{S-shaped} routing. We briefly explain each of these heuristics, accompanied with a visual representation in Figure \ref{fig:Routes}.

\begin{figure}[h]
\centering
    \begin{subfigure}{0.45\textwidth}
        \centering
        \includegraphics[width = \textwidth]{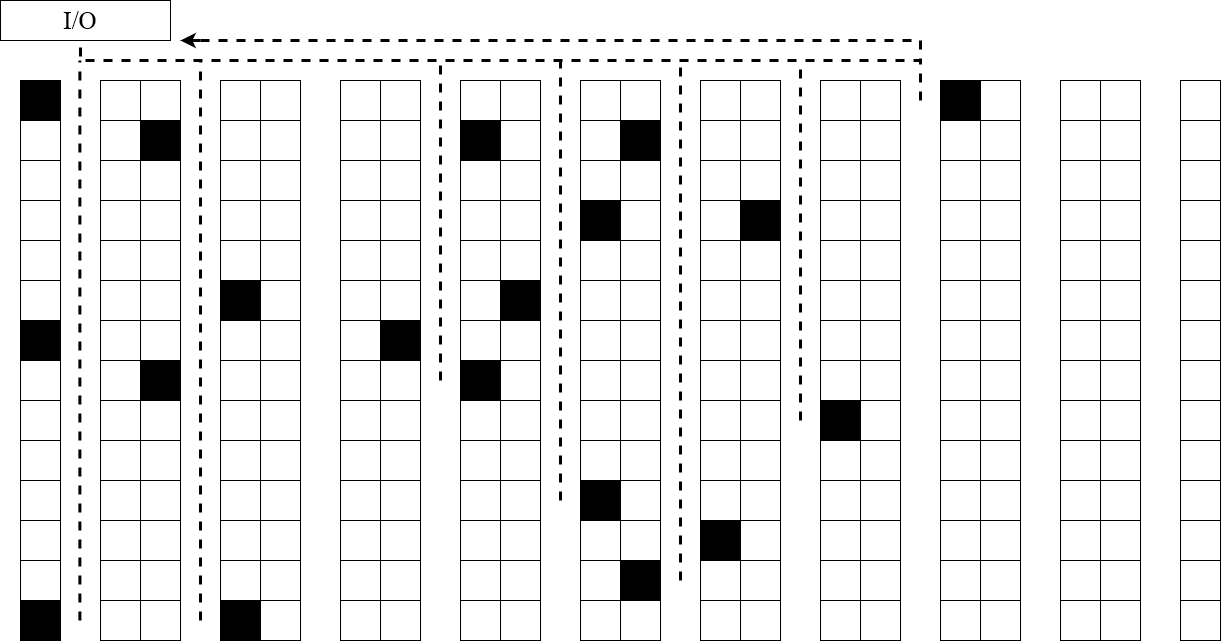}
        \caption{Return routing.}
        \label{fig:ReturnRouting}
    \end{subfigure}
    \hspace{0.05\textwidth}
    \begin{subfigure}{0.45\textwidth}
        \centering
        \includegraphics[width = \textwidth]{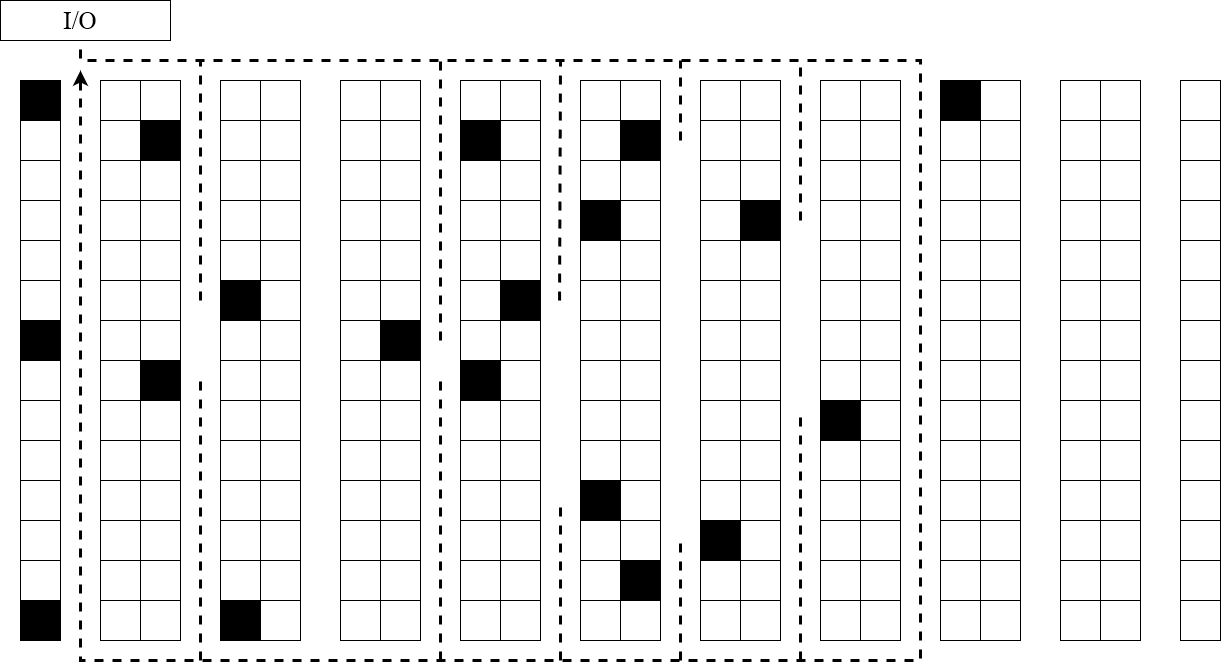}
        \caption{Midpoint routing.}
        \label{fig:MidpointRouting}
    \end{subfigure}

    \begin{subfigure}{0.45\textwidth}
        \centering
        \includegraphics[width = \textwidth]{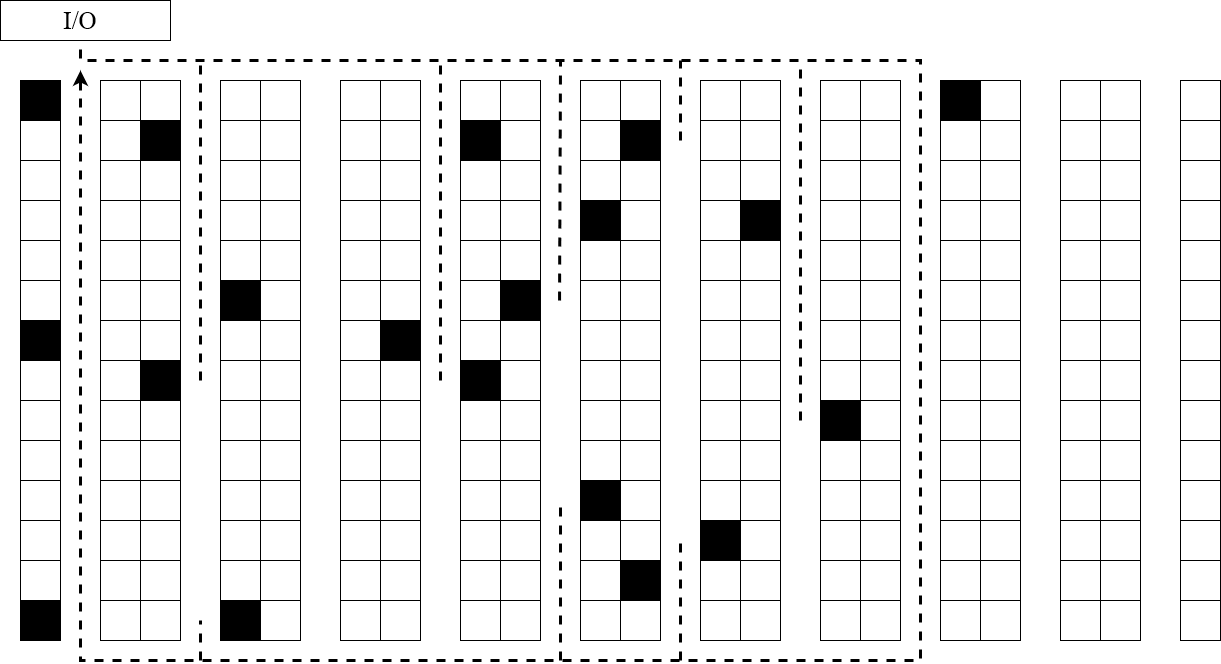}
        \caption{Largest gap routing.}
        \label{fig:LargestGapRouting}
    \end{subfigure}
    \hspace{0.05\textwidth}
    \label{fig:RoutingComp}
        \begin{subfigure}{0.45\textwidth}
        \centering
        \includegraphics[width = \textwidth]{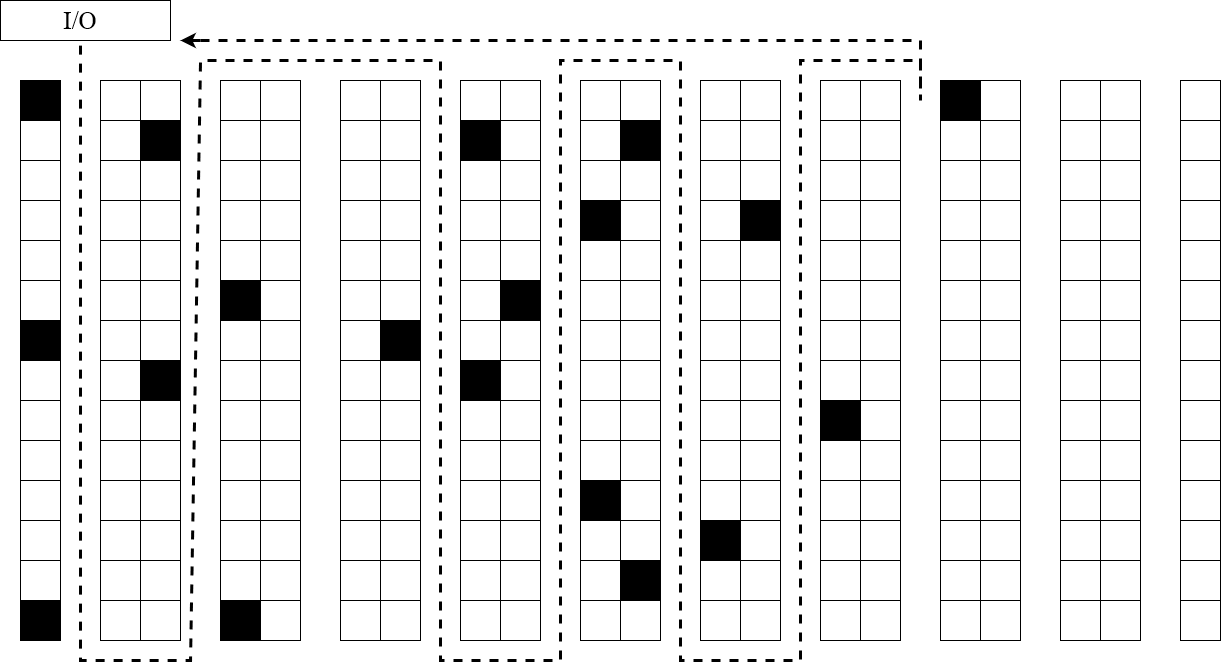}
        \caption{S-shaped routing.}
        \label{fig:S-shapedRouting}
    \end{subfigure}
    \caption{Comparison of different routing heuristics for the same example.}
    \label{fig:Routes}
\end{figure}

\noindent \textbf{Return routing}\\
Return routing is arguably the simplest routing heuristic. The picker walks across the front cross-aisle and walks into each aisle up until the item that is furthest away, picking all items on the left and right along the way. The picker then returns to the front cross-aisle and repeats this for each aisle, see Figure \ref{fig:ReturnRouting}. We denote $A_i$ as the location of the item furthest away in aisle $i$ (as a fraction of the aisle length). For instance, $A_i = 0.5$ means that the furthest item in aisle $i$ is halfway across the aisle. Secondly, we define $k^+$ as the furthest aisle with items. Then the total picking time of the picker for an order of size $M$ is given by:
\begin{align}
    \label{eq:ReturnRouting}
    T = \sum_{i=1}^M P_i + \frac{2l}{v}\sum_{i=1}^{k} A_i + \frac{2}{v}w_a(k^+-1).
\end{align}
This formula consists of three components. The first term denotes the total time the picker spends picking items. The second term denotes the travel time within the aisles, where the $2$ comes from the fact that the picker also walks back to the cross-aisle. The last term denotes how far the picker has to walk along the cross-aisle.

\noindent \textbf{Midpoint routing}\\
In the case of midpoint routing, the picker follows a similar strategy as in return routing. However, the picker now walks across both back and front cross-aisles and picks all items in an aisle until the midpoint. In this heuristic, the picker starts walking across the front cross-aisle and ignores the first aisle with items to be picked. From this point onwards, the picker walks into each aisle and picks all items until the midpoint, until having reached the last aisle with items that have to be picked; this aisle is traversed completely. The picker then walks across the back cross-aisle and again picks all items up until the midpoint in each aisle (from the other side), apart from the last one; this one again is traversed completely after which the picker reaches the I/O point, see Figure \ref{fig:MidpointRouting}.\\
We now denote $A^f_i$ to be the location of the item furthest away in the first half of aisle $i$ (as a fraction of the half-aisle length), i.e. on the side of the front cross-aisle (similarly $A^b_i$ in the back half). Secondly, we define $k^-$ to be the closest aisle with items to pick. Then the picking time of an order of size $M$ is:
\begin{align}
    \label{eq:MidpointRouting}
    T = \sum_{i=1}^M P_i + \frac{l}{v}\sum_{i=k^-+1}^{k^+-1}\Big(A^f_i+A^b_i\Big) + \frac{2l}{v} + \frac{2}{v}w_a(k^+-1).
\end{align}
\textbf{Remark:} Over the course of this paper we assume that the picker will always traverse two aisles completely, the first and last one with items. This, for example, includes the case in which the picker only has to pick items in one aisle.

\noindent \textbf{Largest gap routing}\\
This routing policy is comparable to the midpoint strategy. However, the picker now identifies the largest gap in each aisle, and picks all items from the front- and back cross-aisle up until the largest gap, see Figure \ref{fig:LargestGapRouting}.\\
For the analysis of this strategy we let $D_i$ denote the largest gap in aisle $i$, as a fraction of the aisle length $l$. We again define $k^+$ and $k^-$ to be the furthest aisle and closest aisle with items to pick respectively. Then, for orders of size $M$, we have:
\begin{align}
    \label{eq:LargestGapRouting}
     T = \sum_{i=1}^M P_i + \frac{2l}{v}\sum_{i=k^-+1}^{k^+-1}\Big(1-D_i\Big) + \frac{2l}{v} + \frac{2}{v}w_a(k^+-1).
\end{align}
\textbf{Remark:} Similar to the case of midpoint routing, we assume that the picker always traverses two aisles completely.

\noindent \textbf{S-shaped routing}\\
In case of S-shaped routing the picker traverses the whole aisle for all aisles in which items have to be picked, apart from possibly the last one. By doing so, the picker alternates between walking across the front- and back cross-aisle. If the total number of aisles with items is odd, the picker enters the last aisle from the front cross-aisle and picks all items until the furthest item. The picker then returns to the I/O point, see Figure \ref{fig:S-shapedRouting}. Otherwise the picker also traverses this aisle completely.\\
We define $I_i$ to indicate whether or not aisle $i$ contains items and we define $I_{\text{odd}}$ as the indicator function that returns one when the number of aisles with items is odd. Lastly, we use $A_{k^+}$ to indicate how far across the aisle the furthest item in the last aisle is. Then we have that:
\begin{align}
    \label{eq:S-shapedRouting}
    T = \sum_{i=1}^M P_i + \frac{l}{v}\Big(\sum_{i=1}^{k} I_i + I_{\text{odd}}(2A_{k^+}-1)\Big)+\frac{2}{v}w_a(k^+-1).
\end{align}

\section{Preliminaries} \label{sec:Prelim}

In this section we discuss some preliminary results on different elements of the total picking time. We use these results in the upcoming sections to obtain expressions for the first and second moment of the total picking time. The different components of the results are illustrated in the scheme in Figure \ref{fig:DependenceScheme}. Each of these elements will be discussed in its separate section. The proofs of the statements in this section are given in Appendix \ref{app:ProofsPrelim}.\\
At the top of the scheme we see the order size, $M$, which follows an arbitrary distribution with probability generating function (abv. PGF) $P_M(x) = \mathbb{E}[x^M]$, this distribution affects all other elements of the scheme. Next, we have the number of items to be picked in each aisle: $(N_1,N_2,...,N_k)$, which follows a \emph{mixed} multinomial distribution with $M$ trials and equal probabilities $p=\frac{1}{k}$. Consequently, the joint probability generating function of $(N_1,N_2,...,N_k)$ satisfies:
\begin{align}
    &P_{N_1,N_2,...,N_k}(x_1,x_2,...,x_k) = P_M\Big(\frac{1}{k}\sum_{i=1}^k x_i\Big),
    \label{eq:PGFn}
    \intertext{and hence:} 
    &P_{N_i}(x_i) = P_M\Big(1 - \frac{1}{k} + \frac{x_i}{k}\Big), \quad P_{N_i,N_j}(x_i,x_j) = P_M\Big(1-\frac{2}{k} + \frac{x_i+x_j}{k}\Big).
\end{align}
Next we have the random variables $k^+,k^-$, present in the picking time of each routing heuristic. These are related to the order statistics of discrete uniform random variables and are discussed in Section \ref{sec:kplus}. Furthermore, we have the random variables $A_i$, $A_i^f$, $A_i^b$ and $D_i$ appearing in the different routing heuristics which are related to the order statistics of a continuous uniform random variable, discussed in Section \ref{sec:order}. Finally, in Section \ref{sec:Multinomial} we discuss the random variables $\sum I_i$ and $I_{odd}$ appearing in the S-shaped routing heuristic.
These variables are related to the so-called occupancy problem, see \citet{Chew1999}.

\begin{figure}[H]
\centering
\begin{tikzpicture}
\node[draw] (M) at (0,0) {Order size: $M$};
\node[draw, inner sep=2pt,
  text width=6cm, align = center] (Ni) at (0,-2) {Number of items to be picked in each aisle: $(N_1,N_2,...,N_k)$};
\draw[thick,->,>=stealth] (M)--(Ni);
\node[draw,inner sep=2pt,
  text width=4cm, align = center](k+) at (-4,-4) {Discrete uniform order statistics: $k^+,k^-$};
\node[draw,inner sep=2pt,
  text width=4cm, align = center](I) at (4,-4) {Occupancy problem: $\sum I_i, I_\text{odd}$};
\draw[thick,->,>=stealth] (Ni)--(k+);
\draw[thick,->,>=stealth] (Ni)--(I);
\node[draw,inner sep=2pt,
  text width=5cm, align = center](A) at (0,-6) {Continuous uniform order statistics: $A_i, A_i^f, A_i^b, D_i$};
 \draw[thick,->,>=stealth, dashed] (Ni)--(A);
\end{tikzpicture}
\caption{Scheme of dependencies in the warehouse model}
\label{fig:DependenceScheme}
\end{figure}
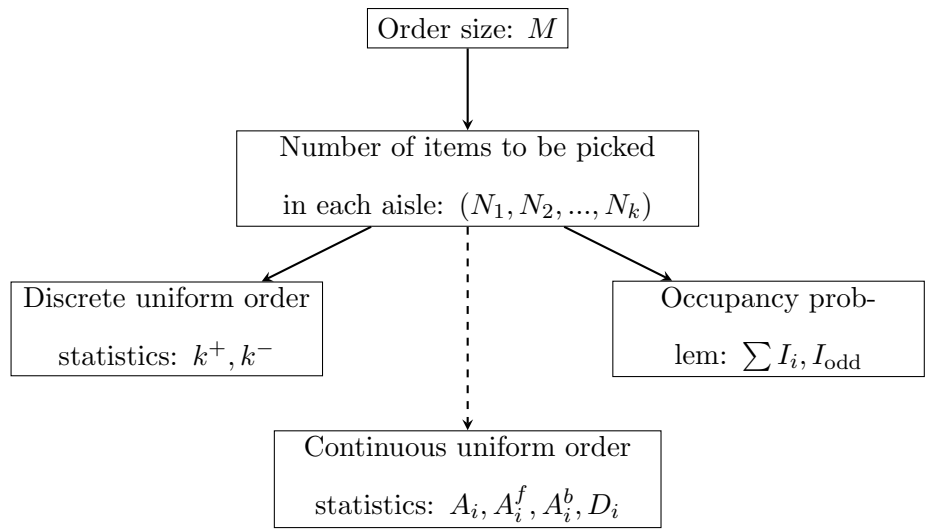

\subsection{Order statistics of discrete uniform random variables} \label{sec:kplus}
In each routing heuristic the picker travels across the cross-aisle up until $k^+$, the furthest aisle in which items have to be picked. Remark that $k^+$ is the maximum of $M$ discrete uniform random variables on $[1,k]$. Conditional on $M=m$ we thus have: $\mathbb{P}(k^+\leq j\vert M = m) = (j/k)^m$. By deconditioning w.r.t. $M$ we thus get:

\begin{lemma}
\label{lemma:kplus}
The moments of $k^+$ are given by:
\begin{align}
\label{eq:PRE_expkplus}
    &\mathbb{E}[k^+] = k - \sum_{j=0}^{k-1}P_M\Big(\frac{j}{k}\Big),\quad\; \mathbb{E}\big[{k^+}^2\big] = k^2 - \sum_{j=0}^{k-1}(2j+1)P_M\Big(\frac{j}{k}\Big).
\end{align}
\end{lemma}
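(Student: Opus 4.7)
The plan is to compute both moments of $k^+$ directly from its tail distribution, which is essentially handed to us by the paragraph preceding the lemma. The starting point is the conditional CDF $\mathbb{P}(k^+\leq j\mid M=m) = (j/k)^m$, which expresses the event that all $m$ independent, uniformly assigned aisle indices fall in $\{1,\ldots,j\}$. Deconditioning on $M$ via the definition $P_M(x)=\mathbb{E}[x^M]$ immediately yields the unconditional CDF
\[
  \mathbb{P}(k^+\leq j) \;=\; \mathbb{E}\big[(j/k)^M\big] \;=\; P_M(j/k), \qquad j=0,1,\ldots,k.
\]
(The boundary case $j=0$ uses the empty-product convention $0^0=1$, which is consistent with defining $k^+=0$ when $M=0$.)

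Next, I would apply the standard tail-sum identities for a non-negative integer-valued random variable $X$: namely $\mathbb{E}[X] = \sum_{j\geq 0}\mathbb{P}(X>j)$, and $\mathbb{E}[X^2] = \sum_{j\geq 0}(2j+1)\mathbb{P}(X>j)$, the latter coming from the telescoping identity $n^2 = \sum_{j=0}^{n-1}(2j+1)$ applied pointwise to $n=X$ and then taking expectations. Since $k^+\leq k$ almost surely, both infinite sums truncate at $j=k-1$, and substituting $\mathbb{P}(k^+>j)=1-P_M(j/k)$ gives
\[
  \mathbb{E}[k^+] \;=\; \sum_{j=0}^{k-1}\bigl(1-P_M(j/k)\bigr), \qquad \mathbb{E}[{k^+}^2] \;=\; \sum_{j=0}^{k-1}(2j+1)\bigl(1-P_M(j/k)\bigr).
\]

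Finally, I would use the elementary evaluations $\sum_{j=0}^{k-1}1 = k$ and $\sum_{j=0}^{k-1}(2j+1)=k^2$ to pull out the ``deterministic'' parts and obtain the two formulas in the lemma statement. There is no real obstacle here: the entire argument is a two-line exercise once one recognises that the PGF $P_M$ is precisely the generating function needed to decondition $(j/k)^M$. The only mild care point is the boundary convention at $j=0$ and the truncation of the tail sum at $j=k-1$, both of which are easy to justify.
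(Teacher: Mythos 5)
Your argument is correct and follows exactly the paper's own route: decondition the conditional CDF $\mathbb{P}(k^+\le j\mid M=m)=(j/k)^m$ to get $\mathbb{P}(k^+\le j)=P_M(j/k)$, then apply the tail-sum identities $\mathbb{E}[X]=\sum_j\mathbb{P}(X>j)$ and $\mathbb{E}[X^2]=\sum_j(2j+1)\mathbb{P}(X>j)$ truncated at $j=k-1$. Your extra remarks on the $j=0$ boundary and the justification of the second tail-sum identity are fine but not needed beyond what the paper already does.
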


\begin{example}[$M \sim \mathrm{Poi}(\lambda) + 1$] We choose this distribution, since the shifted Poisson distribution results in strictly positive order sizes. We have $P_M(x) = x\exp\big(-\lambda(1-x)\big)$:
\begin{align}
    \mathbb{E}[k^+] &= k - \sum_{j=0}^{k-1} \frac{j}{k}\exp\Big(-\lambda + j\frac{\lambda}{k}\Big) = k - \frac{1}{k}\exp\Big(-\lambda + \frac{\lambda}{k}\Big) \frac{\dx}{\dx x}  \sum_{j=0}^{k-1} x^j \bigg\vert_{x=\exp(\lambda/k)}. \nonumber
\end{align}
Using the geometric sum now results in:
\begin{align}
    \mathbb{E}[k^+] &= k - \frac{1}{k}\frac{(k-1)\exp(\lambda/k) - k + \exp(-\lambda + \lambda/k)}{(1-\exp(\lambda/k))^2}. 
\end{align}
An illustration on how this expected value and variance (see \eqref{eq:PRE_expkplus}) are affected by order size and distribution is given in Figure \ref{fig:kplusexample}. We see that the expectation converges to $k$ when $\mathbb{E}[M]$ increases, while the variance converges to $0$. 

\begin{figure}[H]
    \centering
    \includegraphics[width = \textwidth]{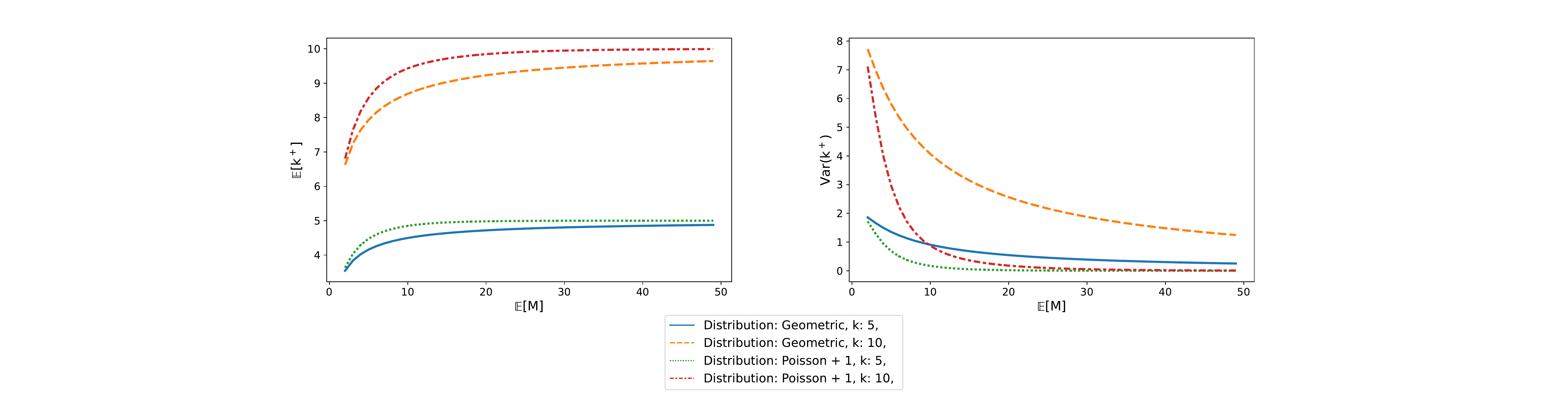}
    \caption{Moments of $k^+$ for various distributions and $k$.}
    \label{fig:kplusexample}
\end{figure}
\end{example}

In a similar way we can study the interaction between $k^+$ and $M$.
\begin{lemma}
\label{lemma:intMkplus}
The expectation $\mathbb{E}\Big[Mk^+\Big]$ is given by
\begin{align}
\label{eq:PRE_Mkplus}
    \mathbb{E}\Big[Mk^+\Big] = k\mathbb{E}[M] - \sum_{j=0}^{k-1} \frac{j}{k}P_M'\Big(\frac{j}{k}\Big).
\end{align}
\end{lemma}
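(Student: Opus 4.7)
The plan is to mimic the derivation of Lemma \ref{lemma:kplus}, but carry the factor $M$ through the conditioning. The key observation is that, conditional on $M=m$, the random variable $k^+$ is the maximum of $m$ i.i.d.\ discrete uniforms on $\{1,\dots,k\}$, so
\begin{equation*}
\mathbb{P}(k^+ \le j \mid M=m) = (j/k)^m, \qquad 0 \le j \le k.
\end{equation*}
Using the tail-sum formula for a nonnegative integer-valued random variable bounded by $k$, I would first write
\begin{equation*}
\mathbb{E}[k^+ \mid M] = k - \sum_{j=0}^{k-1} \mathbb{P}(k^+ \le j \mid M) = k - \sum_{j=0}^{k-1} (j/k)^M.
\end{equation*}

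Next I would apply the tower property with the outer factor $M$:
\begin{equation*}
\mathbb{E}[M k^+] = \mathbb{E}\big[M \cdot \mathbb{E}[k^+\mid M]\big] = k\,\mathbb{E}[M] - \sum_{j=0}^{k-1} \mathbb{E}\!\left[M (j/k)^M\right].
\end{equation*}
Swapping expectation and sum is trivially justified because the sum is finite.

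The last step is the identification $\mathbb{E}[M x^M] = x P_M'(x)$, which follows from differentiating $P_M(x) = \mathbb{E}[x^M]$ termwise (valid for $x \in [0,1]$ by dominated convergence / monotone convergence on the power series). Plugging $x = j/k$ gives $\mathbb{E}[M(j/k)^M] = (j/k) P_M'(j/k)$, and substituting back yields the claimed identity. There is no real obstacle here: the only mildly delicate point is justifying the interchange of expectation and derivative at $x = j/k \le 1$, but since $P_M$ is a probability generating series with nonnegative coefficients this is standard, so the proof is essentially a one-line computation once the conditional tail formula for $k^+$ from Lemma \ref{lemma:kplus} is in hand.
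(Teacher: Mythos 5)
Your proposal is correct and follows essentially the same route as the paper: condition on $M$, use the tail-sum formula $\mathbb{E}[k^+\mid M=m]=k-\sum_{j=0}^{k-1}(j/k)^m$, apply the tower property with the factor $M$, and identify $\mathbb{E}[Mx^M]=xP_M'(x)$. The only difference is notational (the paper writes the deconditioning as an explicit sum over $m$ rather than via $\mathbb{E}[M\cdot\mathbb{E}[k^+\mid M]]$), so there is nothing substantive to add.
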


Given $M=m$, we can even find the joint distribution of $k^+$ and the number of items, $N_i$, to be picked in aisle $i$, denoted as $p_m(n,j):= \mathbb{P}(N_i = n,k^+ = j | M=m)$:
\begin{align}
\label{eq:jointmassfunctioNiKs}
    p_m(n,j) = \begin{dcases} \binom{m}{n}\Big(\frac{1}{k}\Big)^n\cdot \bigg\{\Big(\frac{j-1}{k}\Big)^{m-n}-\Big(\frac{j-2}{k}\Big)^{m-n}\bigg\} & \text{if } j > i, n < m;\\ 
    \binom{m}{n}\Big(\frac{1}{k}\Big)^n\cdot \Big(\frac{j-1}{k}\Big)^{m-n} & \text{if } j = i, 0<n\leq m;\\
    \mathbb{P}(k^+ = j | M=m) & \text{if } j < i, n = 0.
    \end{dcases}
\end{align}
The equality in (\ref{eq:jointmassfunctioNiKs}) for the case $j>i$ follows from the following reasoning. Out of the $m$ items, $n$ of them should come from aisle $i$ (leading to the term $\binom{m}{n}\Big(\frac{1}{k}\Big)^n$) and furthermore the remaining $m-n$ should come from the first $j$ aisles without aisle $i$, with at least one item coming from aisle $j$ (leading to the term  $\Big(\frac{j-1}{k}\Big)^{m-n}-\Big(\frac{j-2}{k}\Big)^{m-n}$).
The equality in (\ref{eq:jointmassfunctioNiKs}) for $j=i$ follows from a similar reasoning. In this case, $n$ items should come from aisle $j$ (leading to the term $\binom{m}{n}\Big(\frac{1}{k}\Big)^n$) and furthermore, the remaining $m-n$ should come from the first $j-1$ aisles (leading to the term  $\Big(\frac{j-1}{k}\Big)^{m-n}$). The equality in (\ref{eq:jointmassfunctioNiKs}) for $j<i$ follows directly from the fact that in this case $k^+=j$ implies that $N_i=0$.\\ 
From this we can find an expression for  $\mathbb{E}[z^{N_i}\mathbbm{1}\{k^+ = j\}]$, which will be useful in Section \ref{sec:Results}.
\begin{lemma}
\label{lemma:ShortcutPGF}
The function $\mathbb{E}\Big[z^{N_i}\mathbbm{1}\{k^+ = j\}\Big]$ is given by
\begin{align}
\label{eq:PRE_PGFkplus}
    \mathbb{E}\Big[z^{N_i}\mathbbm{1}\{k^+ = j\}\Big] =\begin{dcases}
    P_M\Big(\frac{j}{k}\Big)-P_M\Big(\frac{j-1}{k}\Big) &\text{if } j<i;\\
    P_M\Big(\frac{j-1+z}{k}\Big) - P_M\Big(\frac{j-1}{k}\Big) &\text{if } j = i;\\
    P_M\Big(\frac{j-1+z}{k}\Big) - P_M\Big(\frac{j-2+z}{k}\Big)&\text{if } j > i.
    \end{dcases}
\end{align} 
\end{lemma}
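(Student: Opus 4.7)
The plan is to derive the three cases of the formula by conditioning on $M=m$, applying the joint mass function $p_m(n,j)$ from~\eqref{eq:jointmassfunctioNiKs}, summing $z^n p_m(n,j)$ over $n$ via the binomial theorem, and then deconditioning through the PGF $P_M$. Concretely, I start from
\begin{align}
    \mathbb{E}\Big[z^{N_i}\mathbbm{1}\{k^+=j\}\Big] = \sum_{m\geq 0}\mathbb{P}(M=m)\sum_{n\geq 0} z^n\, p_m(n,j),
\end{align}
and handle each of the three cases $j<i$, $j=i$, $j>i$ in turn.

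For $j<i$, only $n=0$ contributes (the furthest occupied aisle being $j<i$ forces $N_i=0$), and $p_m(0,j)=\mathbb{P}(k^+=j\mid M=m)=(j/k)^m-((j-1)/k)^m$. Deconditioning gives $P_M(j/k)-P_M((j-1)/k)$. For $j=i$ the relevant range is $1\le n\le m$, and substituting the second line of~\eqref{eq:jointmassfunctioNiKs} yields
\begin{align}
    \sum_{n=1}^m z^n\binom{m}{n}\Big(\tfrac{1}{k}\Big)^n\Big(\tfrac{j-1}{k}\Big)^{m-n} = \Big(\tfrac{j-1+z}{k}\Big)^m - \Big(\tfrac{j-1}{k}\Big)^m,
\end{align}
by adding and subtracting the missing $n=0$ term and applying the binomial theorem. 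Deconditioning then produces the middle case. For $j>i$, I sum the first line of~\eqref{eq:jointmassfunctioNiKs} over $0\le n\le m-1$; the boundary term at $n=m$ vanishes identically because the bracket $((j-1)/k)^{0}-((j-2)/k)^{0}=0$, so the sum can be extended to $0\le n\le m$ and split into two binomial expansions, giving $((j-1+z)/k)^m-((j-2+z)/k)^m$, which deconditions to the third case.

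The only steps needing mild care are (i) justifying why the $n=0$ term does not contribute when $j=i$ and why the $n=m$ term vanishes when $j>i$, both of which follow from the definition of $k^+$ (if $k^+=j$ then aisle $j$ must be occupied, and if additionally $j>i$ then not all items can lie in aisle $i$); and (ii) the interchange of summation when deconditioning, which is routine since all terms are nonnegative and bounded by $\mathbb{P}(M=m)$ in absolute value for $|z|\le 1$. Apart from these bookkeeping points, the proof is a direct calculation from the joint distribution established just above the lemma, so no genuine obstacle arises.
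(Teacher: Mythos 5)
Your proposal is correct and follows essentially the same route as the paper's proof: condition on $M$, substitute the joint mass function $p_m(n,j)$ from \eqref{eq:jointmassfunctioNiKs}, collapse the sum over $n$ with the binomial theorem, and decondition via the PGF. You are somewhat more careful than the paper about the boundary terms ($n=0$ when $j=i$ and $n=m$ when $j>i$), but this is the same argument, just spelled out for all three cases rather than only the illustrative one.
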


For the midpoint and largest gap routing policy we are furthermore interested in the interaction with $k^+, k^-$. Comparably to Lemma \ref{lemma:ShortcutPGF} we have:
\begin{lemma}
\label{lemma:ShortcutPGF2}
For $1\leq l < i < j \leq k$ we have:
\begin{align}
\label{eq:shortcutpgf2}
\begin{aligned}
    \mathbb{E}\Big[z^{N_i^f}y^{N_i^b}\mathbbm{1}\{k^+ = j, k^- = l\}\Big] &= P_M\Big(\frac{2j - 2l + y +z}{2k}\Big) - 2P_M\Big(\frac{2j - 2l - 2 + y +z}{2k}\Big)\\
    &\quad+ P_M\Big(\frac{2j - 2l - 4+y+z}{2k}\Big).
\end{aligned}
\end{align}
\end{lemma}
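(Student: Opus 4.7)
The plan is to view each item as landing independently and uniformly in one of the $2k$ half-aisles, and then to use a two-sided inclusion-exclusion to reduce the event $\{k^+ = j,\, k^- = l\}$ to nested events of the form $\{k^+ \le b,\, k^- \ge a\}$, for which a direct PGF computation (along the lines of Lemma \ref{lemma:ShortcutPGF}) is available.

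\textbf{Step 1: Inclusion-exclusion.} Since $k^+ = j$ is the same as $\{k^+ \le j\} \setminus \{k^+ \le j-1\}$ and $k^- = l$ is the same as $\{k^- \ge l\} \setminus \{k^- \ge l+1\}$, I would write
\begin{align*}
\mathbbm{1}\{k^+ = j,\, k^- = l\} = \;&\mathbbm{1}\{k^+ \le j,\, k^- \ge l\} - \mathbbm{1}\{k^+ \le j-1,\, k^- \ge l\} \\
& - \mathbbm{1}\{k^+ \le j,\, k^- \ge l+1\} + \mathbbm{1}\{k^+ \le j-1,\, k^- \ge l+1\}.
\end{align*}
The hypothesis $l < i < j$ guarantees that aisle $i$ lies strictly inside each of the four intervals $[l,j]$, $[l,j-1]$, $[l+1,j]$, $[l+1,j-1]$, so the computation below applies uniformly to all four terms.

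\textbf{Step 2: Marking on half-aisles.} Each item is uniformly distributed in one of the $2k$ half-aisles. The event $\{k^+ \le b,\, k^- \ge a\}$ (with $a \le i \le b$) means every item falls in aisles $a, a+1, \ldots, b$, which is $2(b-a+1)$ half-aisles, of which two are the front and back halves of aisle $i$ (weighted by $z$ and $y$ respectively) and the remaining $2(b-a)$ are weighted by $1$. Items landing outside aisles $a,\ldots,b$ contribute $0$. Thus, for each item, its contribution to the generating function is
\[
\frac{z + y + 2(b-a)}{2k},
\]
and since items are independent, deconditioning on $M$ gives
\[
\mathbb{E}\bigl[z^{N_i^f} y^{N_i^b}\, \mathbbm{1}\{k^+ \le b,\, k^- \ge a\}\bigr] = P_M\!\left(\frac{2(b-a) + y + z}{2k}\right).
\]

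\textbf{Step 3: Assembling.} Substituting the four choices $(a,b) \in \{(l,j), (l,j-1), (l+1,j), (l+1,j-1)\}$ into the inclusion-exclusion yields
\[
P_M\!\left(\tfrac{2j-2l + y+z}{2k}\right) - P_M\!\left(\tfrac{2j-2l-2 + y+z}{2k}\right) - P_M\!\left(\tfrac{2j-2l-2 + y+z}{2k}\right) + P_M\!\left(\tfrac{2j-2l-4 + y+z}{2k}\right),
\]
and collecting the two identical middle terms produces the stated formula.

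The main subtlety (rather than obstacle) is to verify carefully that aisle $i$ lies in the allowed range in every one of the four sub-events, so that the same marking scheme applies. Given $1 \le l < i < j \le k$, this is immediate, and once that is observed the rest reduces to the clean multinomial PGF calculation above.
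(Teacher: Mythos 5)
Your proof is correct and takes essentially the same approach as the paper's: the paper embeds the same two-sided inclusion--exclusion (forcing at least one item in aisle $l$ and at least one in aisle $j$) inside the joint probability mass function of $(N_i^f,N_i^b,k^+,k^-)$ and then applies the multinomial theorem, whereas you perform the inclusion--exclusion at the level of indicator functions first and evaluate each rectangle event $\{k^+\le b,\, k^-\ge a\}$ by per-item independence. The two computations coincide term by term, so this is a repackaging rather than a different argument.
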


\begin{lemma}
\label{lemma:ShortcutPGF3}
For arbitrary aisles $l,i,i^*,j$ with: $l < i< i^* < j$ we have:
\begin{align}
\label{eq:shortcutpgf3}
\begin{aligned}
    \mathbb{E}\Big[z^{N_i}y^{N_{i^*}}\mathbbm{1}\{k^+ = j, k^- = l\}\Big] &= P_M\Big(\frac{j-l-1+z+y}{k}\Big) - 2P_M\Big(\frac{j-l-2+z+y}{k}\Big)\\
    &\quad+ P_M\Big(\frac{j - l - 3+z+y}{k}\Big).
\end{aligned}
\end{align}
\end{lemma}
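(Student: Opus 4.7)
The plan is to mimic the reasoning behind Lemma \ref{lemma:ShortcutPGF2}, but with two distinguished interior aisles instead of splitting a single aisle into a front and a back half. I would first condition on $M=m$, treat the $m$ item locations as i.i.d.\ uniform draws from $\{1,\dots,k\}$, and write down the joint probability
\[
\mathbb{P}\bigl(N_i=n,\,N_{i^*}=n^*,\,k^+=j,\,k^-=l\,\big|\,M=m\bigr)
\]
as a multinomial coefficient $\binom{m}{n,n^*,m-n-n^*}(1/k)^{n+n^*}$ multiplied by the probability that the remaining $m-n-n^*$ items land in the $j-l-1$ aisles $\{l,l+1,\dots,j\}\setminus\{i,i^*\}$ while leaving at least one item in aisle $l$ and at least one in aisle $j$.

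For this last probability I would apply inclusion-exclusion on the two events ``no item in aisle $l$'' and ``no item in aisle $j$,'' all conditional on the items being confined to those $j-l-1$ aisles. This yields
\[
\Bigl(\tfrac{j-l-1}{k}\Bigr)^{m-n-n^*} - 2\Bigl(\tfrac{j-l-2}{k}\Bigr)^{m-n-n^*} + \Bigl(\tfrac{j-l-3}{k}\Bigr)^{m-n-n^*},
\]
where the factor of $2$ comes from the symmetric roles of aisles $l$ and $j$ in the single-exclusion terms. Multiplying by $z^n y^{n^*}$ and summing over $n,n^*\ge 0$ with $n+n^*\le m$, the multinomial theorem collapses each of the three pieces, giving
\[
\Bigl(\tfrac{j-l-1+z+y}{k}\Bigr)^{m} - 2\Bigl(\tfrac{j-l-2+z+y}{k}\Bigr)^{m} + \Bigl(\tfrac{j-l-3+z+y}{k}\Bigr)^{m}.
\]

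Finally, deconditioning on $M$ by taking $\mathbb{E}[\,\cdot\,]$ replaces each $(\cdot)^m$ with $P_M(\cdot)$ since $P_M(x)=\mathbb{E}[x^M]$, and the claimed identity drops out immediately.

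The main obstacle I anticipate is simply bookkeeping: making sure the inclusion-exclusion counts aisles correctly. In particular, one must verify that the strict ordering $l<i<i^*<j$ is what guarantees that removing aisles $l$ or $j$ leaves exactly $j-l-2$ allowed aisles in each single-exclusion term (and $j-l-3$ in the double-exclusion term), rather than fewer. If the aisles $i,i^*$ coincided with $l$ or $j$, the counting would be different and the three-term structure would degenerate, much as in Lemma \ref{lemma:ShortcutPGF}. Once the geometry is pinned down, the remainder is the same multinomial-theorem plus deconditioning step used throughout Section \ref{sec:Prelim}.
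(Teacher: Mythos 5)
Your proposal is correct and follows essentially the same route as the paper: the paper's proof likewise writes down the joint mass function $\mathbb{P}(N_i=n,N_{i^*}=h,k^+=j,k^-=l\mid M=m)$ as a multinomial coefficient times the same three-term inclusion-exclusion bracket, and then sums and deconditions exactly as in Lemmas \ref{lemma:ShortcutPGF} and \ref{lemma:ShortcutPGF2}. No substantive differences.
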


In the sequel we use, for an arbitrary random variable $X$, the notation 
\begin{align}
\label{eq:PGFcondDefinition}
    \hat{P}_{X}(z;j,l) = \mathbb{E}\Big[z^{X}\mathbbm{1}\{k^+ = j, k^- = l\}\Big].
\end{align}
In particular we use this notation when $X$ represents the number of items in a (half-)aisle, i.e. $X=N_i(^f \text{ or } ^b)$. In this case we remark that $\hat{P}_{N_i}(z;j,l)$ is the same for all $i$ satisfying $l < i < j$. Moreover, in this case, the function $\hat{P}_{N_i}(z;j,l)$ is also the same for all $i$, $j$ and $l$ such that $l < i < j$ and $j-l$ is fixed.

\subsection{Order statistics of continuous uniform random variables}
\label{sec:order}
In the routing problem, we encounter several random variables related to the order statistics of continuous uniform random variables. Firstly we discuss the \emph{furthest} location of an item in a (half-)aisle $i$, $A_i$ (and $A_i^f$ or $A_i^b$). This is the maximum of $N_i$ uniform random variables and hence: $\mathbb{P}(A_i\leq x\vert N_i = n) = x^n$, by conditioning over $N_i$ we now find:

\begin{lemma}
\label{lemma:PRE_momentsA}
For the furthest location, $A_i$, of an item in aisle $i$ we have
\begin{align}
    \label{eq:PRE_expA}
    &\mathbb{E}[A_i] = 1 - \int_{x=0}^1 P_M\Big(1-\frac{1}{k}+\frac{x}{k}\Big)\dx x;
    &\mathbb{E}[A_i^2] = 1 - 2\int_{x=0}^1 xP_M\Big(1-\frac{1}{k}+\frac{x}{k}\Big)\dx x.
\end{align}
Furthermore, for the furthest locations $A_i$ and $A_j$ in two different aisles we have
\begin{align}
    \label{eq:PRE_corA}
    &\mathbb{E}[A_iA_j] = 1 - 2\int_{x=0}^1 P_M\Big(1-\frac{1}{k}+\frac{x}{k}\Big)\dx x + \int_{x=0}^1 \int_{y=0}^1P_M\Big(1-\frac{2}{k}+\frac{x+y}{k}\Big)\dx y\dx x.
\end{align}
\end{lemma}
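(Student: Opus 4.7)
The plan is to exploit the standard fact that, conditional on $N_i = n$, the item locations in aisle $i$ are i.i.d.\ $\mathrm{Uniform}(0,1)$, so $A_i$ is the maximum of these $n$ variables and hence $\mathbb{P}(A_i \leq x \mid N_i = n) = x^n$ for $x \in [0,1]$. This is already noted in the text just above the lemma; I only need the convention $A_i := 0$ when $N_i = 0$ so that the formula continues to hold at $n=0$ (consistent with $x^0=1$). Deconditioning over $N_i$ then immediately produces the marginal cdf
\[
F_{A_i}(x) \;=\; \mathbb{E}\bigl[x^{N_i}\bigr] \;=\; P_{N_i}(x) \;=\; P_M\Big(1-\tfrac{1}{k}+\tfrac{x}{k}\Big),
\]
using the PGF identity recorded at the start of Section~\ref{sec:Prelim}.

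From here, the first-moment and second-moment formulas in \eqref{eq:PRE_expA} follow at once from the tail-integral identities $\mathbb{E}[A_i]=\int_0^1(1-F_{A_i}(x))\dx x$ and $\mathbb{E}[A_i^2]=\int_0^1 2x\bigl(1-F_{A_i}(x)\bigr)\dx x$, together with the elementary evaluations $\int_0^1 \dx x = 1$ and $\int_0^1 2x\dx x = 1$.

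For the product moment \eqref{eq:PRE_corA} I would first observe that, conditional on $(N_i, N_j) = (n,m)$ with $i \neq j$, the items in aisles $i$ and $j$ are independent, so
\[
\mathbb{P}(A_i \leq x,\, A_j \leq y \mid N_i = n, N_j = m) \;=\; x^n y^m;
\]
deconditioning and invoking the joint PGF $P_{N_i,N_j}(x,y) = P_M\bigl(1-\tfrac{2}{k}+\tfrac{x+y}{k}\bigr)$ then yields the joint cdf of $(A_i, A_j)$. Combining this with the product tail identity $\mathbb{E}[A_iA_j]=\int_0^1\!\!\int_0^1 \mathbb{P}(A_i>x, A_j>y)\dx x\dx y$ and the inclusion-exclusion expansion $\mathbb{P}(A_i>x, A_j>y) = 1 - F_{A_i}(x) - F_{A_j}(y) + F_{A_i,A_j}(x,y)$ delivers \eqref{eq:PRE_corA} after the two single-variable terms each contribute $-\int_0^1 P_M(1-\tfrac{1}{k}+\tfrac{\cdot}{k})$.

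There is no real obstacle here: the entire argument is a routine tail-integration built on top of PGF identities already in hand. The only care point is the boundary case $N_i = 0$, where the convention $A_i := 0$ is exactly what is needed for $\mathbb{P}(A_i \leq x \mid N_i = n) = x^n$ to remain valid at $n = 0$, so that the deconditioning step collapses cleanly into a single PGF evaluation rather than splitting off a degenerate atom at zero.
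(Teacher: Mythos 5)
Your proposal is correct and follows essentially the same route as the paper's proof: condition on $N_i$ to get $\mathbb{P}(A_i\leq x\mid N_i=n)=x^n$, decondition via the PGF to obtain $\mathbb{P}(A_i\leq x)=P_M\big(1-\tfrac{1}{k}+\tfrac{x}{k}\big)$, apply the tail-integral identities for the first two moments, and use the inclusion-exclusion expansion of $\mathbb{P}(A_i>x,A_j>y)$ under the double tail integral for the cross moment. Your explicit remark about the convention $A_i:=0$ when $N_i=0$ is a welcome clarification the paper leaves implicit, but it does not change the argument.
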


\begin{example}[$M \sim \mathrm{Poi}(\lambda) + 1$]
Using partial integration shows:
\begin{align}
  \mathbb{E}[A_i] &= 1-\int_{x=0}^1 \Big(1-\frac{1}{k}+\frac{x}{k}\Big)\exp\Big(-\lambda\Big[\frac{1}{k}-\frac{x}{k}\Big]\Big)\dx x \nonumber\\
  &= 1 - \frac{k}{\lambda^2}(\lambda - 1) + \frac{1}{\lambda^2}\exp(-\lambda/k)(k\lambda - \lambda - k).
\end{align}
This function, alongside the variance and correlation of $A_i$, is illustrated in Figure \ref{fig:AiExample}. Additionally we also plotted these values for the geometric distribution. We see that the Poisson order sizes cause a higher expected value, but a lower variance. This can be explained by the fact that the PGF of the Poisson distribution dominates that of the geometric distribution with the same mean: that is $P_\mathrm{Poi+1}(x)\geq P_\mathrm{Geo}(x)$ for all $x\in[0,1]$. 

\begin{figure}[H]
    \centering
    \includegraphics[width = \textwidth]{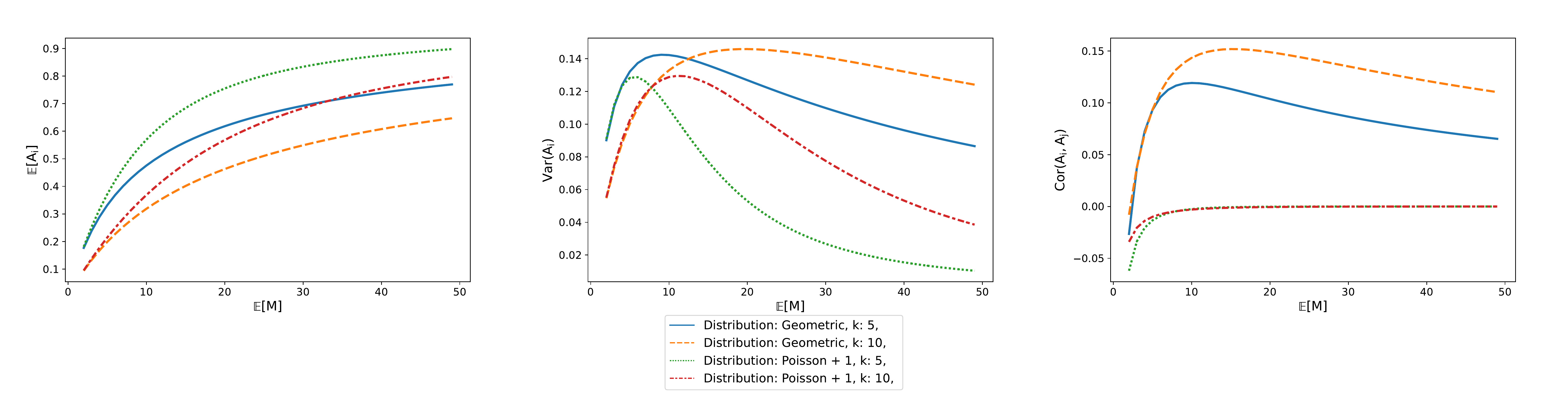}
    \caption{Moments of $A_i$ for choice of the distribution and of $k$.}
    \label{fig:AiExample}
\end{figure}
\end{example}

Comparably, for $A_i^f$ and $A_i^b$ we can essentially replace $k$ by $2k$, since the number of half-aisles is twice the number of aisles.
\begin{corollary}
\label{cor:PRE_expAf}
For the furthest locations $A_i^f$ and $A_i^b$ in an half-aisle we have
\begin{align}
    \label{eq:PRE_expAf}
    &\mathbb{E}[A_i^f] =\mathbb{E}[A_i^b] = 1 - \int_{x=0}^1 P_M\Big(1-\frac{1}{2k}+\frac{x}{2k}\Big)\dx x;\\
    \label{eq:PRE_secAf}
    &\mathbb{E}\left[({A_i^f})^2\right] = \mathbb{E}\left[({A_i^b})^2\right] = 1 - 2\int_{x=0}^1 x P_M\Big(1-\frac{1}{2k}+\frac{x}{2k}\Big)\dx x;\\
    \label{eq:PRE_corAf}
    &\mathbb{E}[A_i^f A_j^f] = \mathbb{E}[A_i^b A_j^b] = 
    \begin{aligned}[t]
    &1 - 2\int_{x=0}^1 P_M\Big(1-\frac{1}{2k}+\frac{x}{2k}\Big)\dx x \\
    &+ \int_{x=0}^1 \int_{y=0}^1P_M\Big(1-\frac{1}{k}+\frac{x+y}{2k}\Big)\dx y\dx x.
    \end{aligned}
\end{align}
\end{corollary}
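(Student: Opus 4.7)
The plan is to reduce directly to Lemma \ref{lemma:PRE_momentsA} by viewing the $k$ aisles partitioned into their front and back halves as a hypothetical warehouse with $2k$ aisles (of half the length). Since each item is uniformly distributed among the $k$ aisles and uniformly located within its aisle, every item independently lands in any prescribed half-aisle with probability $1/(2k)$. Consequently the occupancy vector $(N_1^f, N_1^b, N_2^f, \ldots, N_k^b)$ has the same mixed multinomial distribution as $(N_1, \ldots, N_{2k})$ would have in a $2k$-aisle warehouse, and conditional on $N_i^f = n$ the $n$ positions, measured as fractions of the half-aisle length, are i.i.d.\ uniform on $[0,1]$. Hence $A_i^f$ (respectively $A_i^b$) has the same law as $A_i$ would have in such a $2k$-aisle warehouse, and similarly any pair $(A_i^f, A_j^f)$ or $(A_i^b, A_j^b)$ with $i \neq j$ matches the joint law of $(A_{i'}, A_{j'})$ for two distinct aisles in that warehouse.

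With this identification in hand the conclusion follows by substituting $k \mapsto 2k$ in \eqref{eq:PRE_expA} and \eqref{eq:PRE_corA}. The factor $1/k$ becomes $1/(2k)$, so the argument $1 - 1/k + x/k$ of $P_M$ becomes $1 - 1/(2k) + x/(2k)$, producing \eqref{eq:PRE_expAf} and \eqref{eq:PRE_secAf}. For the cross moment, the same substitution turns $1 - 2/k + (x+y)/k$ into $1 - 1/k + (x+y)/(2k)$, which is precisely the integrand in \eqref{eq:PRE_corAf}. The equalities $\mathbb{E}[A_i^f] = \mathbb{E}[A_i^b]$ and $\mathbb{E}[(A_i^f)^2] = \mathbb{E}[(A_i^b)^2]$ follow from the evident symmetry of the model under interchange of front and back halves.

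There is no real obstacle; the corollary is essentially a relabeling of Lemma \ref{lemma:PRE_momentsA}. The only point worth flagging is that the cross moment in \eqref{eq:PRE_corAf} involves two half-aisles sitting in \emph{different} aisles, so these correspond to two distinct cells of the $2k$-partition and the two-aisle formula \eqref{eq:PRE_corA} applies after the substitution. (Cross moments such as $\mathbb{E}[A_i^f A_i^b]$, involving the two halves of the same aisle, are not required here and are not asserted.)
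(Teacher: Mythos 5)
Your proposal is correct and matches the paper's approach: the paper gives no separate appendix proof of this corollary, justifying it only with the remark that one can ``essentially replace $k$ by $2k$, since the number of half-aisles is twice the number of aisles,'' which is exactly the identification you make precise. Your substitutions into \eqref{eq:PRE_expA} and \eqref{eq:PRE_corA} check out, and your side-remark that $\mathbb{E}[A_i^f A_i^b]$ (same aisle, opposite halves) is a different quantity not covered here is a sensible clarification.
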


We can can use this method to find the expectation of $A_i\mathbbm{1}\{k^+=j\}$, giving insight in the expected value of $A_i$ when $k^+$ is known.
\begin{lemma}
\label{lemma:Interact-A_i/k^+}
The expectation $\mathbb{E}[A_i\mathbbm{1}\{k^+=j\}]$ is given by
\begin{align}
    \mathbb{E}[A_i\mathbbm{1}\{k^+=j\}]=\mathbb{P}(k^+ = j)-\int_{x=0}^1 \mathbb{E}[x^{N_i}\mathbbm{1}\{k^+ = j\}]\dx x.
\end{align}
\end{lemma}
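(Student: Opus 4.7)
The plan is to express $A_i$ as an integral of an indicator and then exchange integration with expectation. Since $A_i \in [0,1]$ by construction, we can write
\begin{align*}
    A_i\,\mathbbm{1}\{k^+=j\} = \int_{0}^{1}\mathbbm{1}\{A_i > x\}\,\mathbbm{1}\{k^+=j\}\,\dx x.
\end{align*}
Taking expectations on both sides and applying Tonelli (the integrand is nonnegative and bounded) yields
\begin{align*}
    \mathbb{E}[A_i\,\mathbbm{1}\{k^+=j\}] = \int_{0}^{1}\mathbb{P}(A_i > x,\ k^+=j)\,\dx x = \mathbb{P}(k^+=j) - \int_{0}^{1}\mathbb{P}(A_i \leq x,\ k^+=j)\,\dx x,
\end{align*}
where in the last step I split the joint probability over the complementary event $\{A_i \leq x\}$.

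Next, I would evaluate $\mathbb{P}(A_i \leq x, k^+=j)$ by conditioning on the number of items $N_i$ in aisle $i$. Recall that, given $N_i = n \geq 1$, the variable $A_i$ is the maximum of $n$ independent $\mathrm{Unif}(0,1)$ random variables, so $\mathbb{P}(A_i \leq x \mid N_i = n) = x^n$. For $n=0$ we adopt the natural convention $A_i = 0$, which gives $\mathbb{P}(A_i \leq x \mid N_i = 0) = 1 = x^0$ consistently. Moreover, conditional on $N_i$, the aisle-$i$ items are independent of the items placed in the other aisles, so conditional on $N_i$ the event $\{A_i \leq x\}$ is independent of $\{k^+ = j\}$. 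Hence
\begin{align*}
    \mathbb{P}(A_i \leq x,\ k^+=j) = \sum_{n \geq 0} x^n\,\mathbb{P}(N_i = n,\ k^+=j) = \mathbb{E}\bigl[x^{N_i}\,\mathbbm{1}\{k^+=j\}\bigr].
\end{align*}
Substituting this back into the previous display gives exactly the claimed identity.

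The only subtle step is the handling of the $N_i=0$ case, which is why I would remark explicitly on the convention $A_i=0$ when aisle $i$ is empty; everything else is a routine Tonelli argument together with the order-statistic CDF $\mathbb{P}(\max \leq x) = x^n$ that the paper has already been using. I do not expect a genuine obstacle here.
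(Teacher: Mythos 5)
Your proof is correct and follows essentially the same route as the paper: writing $\mathbb{E}[A_i\mathbbm{1}\{k^+=j\}]$ as $\int_0^1 \mathbb{P}(A_i>x,k^+=j)\,\dx x$, passing to the complement, and summing $x^n\,\mathbb{P}(N_i=n,k^+=j)$ over $n$. Your explicit remarks on the conditional independence given $N_i$ and on the convention $A_i=0$ when $N_i=0$ are details the paper leaves implicit, but they do not change the argument.
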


Next we consider the random variables $D_i$, the \emph{largest} gap between two subsequent uniform random variables. For this we use the theory in  \citet{Pyke1965}[Sections 4.1-4.4] and \citet{Holst1980}[Theorem~2.2] for fixed $N_i$. We build on this by conditioning on $N_i$, resulting in:
\begin{lemma}
\label{corollary:LargGapMoments}
For the largest gap, $D_i$, in aisle $i$ we have
\begin{align}
\label{eq:EDex}
    &\mathbb{E}[D_i] = - \int_{x=0}^1 P_M\Big(1-\frac{1}{k}+\frac{x}{k}\Big)\log(1-x)\mathrm{d}x.\\
\label{eq:SecDex}
    &\mathbb{E}[D_i^2] = \int_{x=0}^1 xP_M\Big(1-\frac{1}{k}+\frac{x}{k}\Big)\int_{y=x}^1\frac{\log^2(1-y)}{y^2}\dx y\dx x.
\end{align}
Moreover for the largest gaps $D_i$ and $D_j$ in two different aisles $i$ and $j$ we have
\begin{align}
    \label{eq:corDex}
     &\mathbb{E}[D_iD_j] = \int_{x=0}^1\int_{y=0}^1 P_M\Big(1-\frac{2}{k}+\frac{x+y}{k}\Big)\log(1-x)\log(1-y)\dx y\mathrm{d}x.
\end{align}
\end{lemma}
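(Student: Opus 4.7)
The plan is to derive the conditional moments of $D_i$ given $N_i = n$ from the classical theory of uniform spacings and then de-condition through the probability generating function $P_{N_i}(x) = P_M\bigl(1 - \tfrac{1}{k} + \tfrac{x}{k}\bigr)$. Given $N_i = n$, the item locations in aisle $i$ are $n$ i.i.d.\ uniform points on $[0,1]$ and $D_i$ is the maximum of the resulting $n+1$ spacings, for which inclusion--exclusion following \citet{Pyke1965} yields
\[
\mathbb{P}(D_i > t \mid N_i = n) = \sum_{j=1}^{n+1} (-1)^{j+1}\binom{n+1}{j}(1-jt)_+^n.
\]
Integrating this tail over $t \in [0,1]$ and rewriting the resulting finite sum, or equivalently expanding $-\log(1-x) = \sum_{m \geq 1} x^m/m$ and checking directly that $-\int_0^1 x^n \log(1-x)\,\dx x = \sum_{m \geq 1}\frac{1}{m(n+m+1)} = H_{n+1}/(n+1)$, delivers $\mathbb{E}[D_i \mid N_i = n] = -\int_0^1 x^n \log(1-x)\,\dx x$. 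Swapping sum and integral in $\mathbb{E}[D_i] = \sum_n \mathbb{P}(N_i = n)\,\mathbb{E}[D_i \mid N_i = n]$ then collapses $\sum_n \mathbb{P}(N_i=n)\,x^n$ into $P_{N_i}(x)$, yielding (\ref{eq:EDex}).

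The second moment (\ref{eq:SecDex}) follows the same template. Starting from $\mathbb{E}[D_i^2 \mid N_i = n] = 2\int_0^1 t\,\mathbb{P}(D_i > t \mid N_i = n)\,\dx t$, I would plug in the Pyke tail and reorganise the resulting finite sum into $\int_0^1 x^{n+1}\int_x^1 \log^2(1-y)/y^2\,\dx y\,\dx x$. A systematic way to do this is to expand $\log^2(1-y)/y^2 = \bigl(\sum_{m\geq 1} y^{m-1}/m\bigr)^2$ as a power series in $y$ and then match coefficients of $x^n$ after integration, exactly as in the first-moment computation. De-conditioning over $N_i$ again produces (\ref{eq:SecDex}).

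For the cross-moment (\ref{eq:corDex}), the key observation is that conditional on $(N_i, N_j)$ the item locations in aisle $i$ are drawn independently from those in aisle $j$, so $D_i$ and $D_j$ are conditionally independent. Applying the conditional first-moment formula to each aisle gives
\[
\mathbb{E}[D_i D_j \mid N_i, N_j] = \int_0^1\int_0^1 x^{N_i} y^{N_j} \log(1-x)\log(1-y)\,\dx x\,\dx y,
\]
and taking the outer expectation together with the joint PGF $\mathbb{E}[x^{N_i}y^{N_j}] = P_M\bigl(1 - 2/k + (x+y)/k\bigr)$ from (\ref{eq:PGFn}) yields (\ref{eq:corDex}). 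The main obstacle is the second-moment step: recasting the inclusion--exclusion sum for $\mathbb{E}[D_i^2 \mid N_i = n]$ in the compact double-integral form involving $\log^2(1-y)/y^2$ is the one genuinely non-routine manipulation. The first moment follows quickly from the standard $\log(1-x)$ series, and the cross moment is then immediate by conditional independence.
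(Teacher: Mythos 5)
Your proposal is correct and shares the paper's overall architecture (compute conditional moments of the maximum spacing given $N_i=n$, then decondition through $P_{N_i}(x)=P_M(1-\tfrac1k+\tfrac{x}{k})$ and the joint PGF), but it reaches the conditional moments by a genuinely different route. The paper invokes the representation $\mathbb{E}[D_i^r\mid N_i=n]=\frac{n!}{(n+r)!}\mathbb{E}[(\max_{i\le n+1}X_i)^r]$ with $\max_i X_i\overset{d}{=}\sum_{j=1}^{n+1}X_j/j$ for i.i.d.\ unit exponentials, which yields $H_{n+1}/(n+1)$ and $\tfrac{2}{(n+1)(n+2)}\sum_{j\le n+1}H_j/j$, and then converts harmonic numbers into integrals via $H_n=\int_0^1\frac{1-x^n}{1-x}\,\dx x$ followed by integration by parts and repeated interchanges of integration order. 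You instead start from the Whitworth--Fisher tail $\mathbb{P}(D_i>t\mid N_i=n)=\sum_{j}(-1)^{j+1}\binom{n+1}{j}(1-jt)_+^n$ and from power-series expansions of $\log(1-x)$; for the first and cross moments this is complete and arguably more self-contained (the identity $-\int_0^1x^n\log(1-x)\,\dx x=H_{n+1}/(n+1)$ is verified directly, and the cross moment follows from conditional independence exactly as in the paper). The one place where you only sketch is the second moment: integrating the tail gives $\mathbb{E}[D_i^2\mid N_i=n]=\tfrac{2}{(n+1)(n+2)}\sum_{j=1}^{n+1}(-1)^{j+1}\binom{n+1}{j}j^{-2}$, and matching this to $\int_0^1x^{n+1}\int_x^1\log^2(1-y)y^{-2}\,\dx y\,\dx x$ (which, after swapping the integrals, equals $\tfrac{1}{n+2}\int_0^1y^n\log^2(1-y)\,\dx y$) requires the Euler-type identities $\sum_{j=1}^{N}(-1)^{j+1}\binom{N}{j}j^{-2}=\sum_{j=1}^{N}H_j/j$ and $\int_0^1y^n\log^2(1-y)\,\dx y=\tfrac{2}{n+1}\sum_{j=1}^{n+1}H_j/j$, the latter following from the expansion $\log^2(1-y)=\sum_{r\ge2}\tfrac{2H_{r-1}}{r}y^r$ and partial fractions. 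These do go through, so your plan is sound, but as written this step is asserted rather than proved; the paper's chain of integrations by parts avoids the alternating binomial sum entirely, which is what its exponential-representation detour buys it.
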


\begin{example}[Numerical examples]
The integrals given in Lemma \ref{corollary:LargGapMoments} are hard to find analytically, instead we used a numerical evaluation of integrals for the calculation of the moments for both shifted Poisson and Geometric distributions. This is illustrated in Figure \ref{fig:DiExample}. Here, we again see that the expected value is smaller for geometric order size distributions, but at the cost of a higher variance
\begin{figure}[H]
    \centering
    \includegraphics[width = \textwidth]{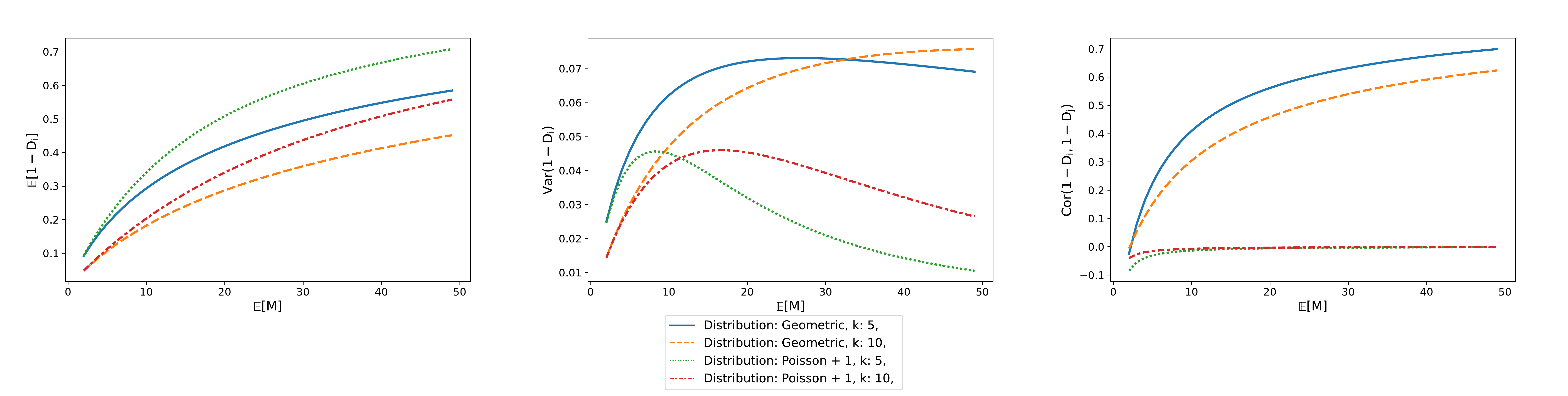}
    \caption{Moments of $1-D_i$ for choice of the distribution and of $k$.}
    \label{fig:DiExample}
\end{figure}
\end{example}

\subsection{Classical occupancy problem}
\label{sec:Multinomial}

In the case of S-shaped routing we are interested in the quantity $\sum_{i=1}^k I_i$, the number of aisles containing items that have to be picked in an order. For a fixed order size, $M=m$, this quantity is related to the number of occupied bins in the classical occupancy problem in which $m$ balls are randomly allocated to $k$ bins. For this problem the distribution, expectation and variance of the number of occupied bins are well-known (see e.g. \citet{Feller1968,Johnson1977}). From these results we can obtain the distribution, expectation and second moment of the number of aisles containing items in the case of arbitrarily distributed order sizes, as formulated in the following lemma. 

\begin{lemma}
\label{lemma:sshaped1}
For the random variable $\sum_{i=1}^k I_i$ we have
\begin{alignat}{2}
\label{eq:istarprobJacques} 
    &\mathbb{P}\Big(\sum_{i=1}^k I_i = j\Big)&&=\binom{k}{j}\sum_{l=0}^{j}(-1)^{j-l}\binom{j}{l}P_M\Big(\frac{l}{k}\Big),\\
\label{eq:PRE_istar}
    &\mathbb{E}\Big[\sum_{i=1}^k I_i\Big] &&= k-kP_M\Big(1-\frac{1}{k}\Big),\\
\label{eq:PRE_istar2}
    &\mathbb{E}\Big[\Big(\sum_{i=1}^k I_i\Big)^2\Big] &&= k^2 + k(1-2k)P_M\Big(1-\frac{1}{k}\Big)+k(k-1)P_M\Big(1-\frac{2}{k}\Big).
\end{alignat}
\end{lemma}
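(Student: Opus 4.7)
The plan is to handle the pointwise probability mass function via the classical occupancy problem conditional on $M = m$, and then decondition by recognising the resulting series as PGFs evaluated at the points $l/k$. For the two moments I would take a slicker route via indicator variables and the joint PGF \eqref{eq:PGFn}, rather than going back through the PMF formula.

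For \eqref{eq:istarprobJacques}, condition on $M = m$. Given $m$ balls distributed uniformly into $k$ bins, the event $\{\sum_{i=1}^k I_i = j\}$ is exactly the event that precisely $j$ bins are occupied. First choose which $j$ bins are occupied (giving a factor $\binom{k}{j}$). Then the probability that these $j$ specified bins are \emph{all} non-empty while the remaining $k-j$ bins are empty can be handled by inclusion--exclusion on the events ``bin $i$ of the $j$ chosen bins is empty,'' conditional on all balls falling in the $j$ chosen bins. This yields
\[
\mathbb{P}\Big(\sum_{i=1}^k I_i = j \,\Big|\, M = m\Big) \;=\; \binom{k}{j}\sum_{l=0}^{j}(-1)^{j-l}\binom{j}{l}\Big(\tfrac{l}{k}\Big)^{m}.
\]
Deconditioning with $\sum_{m} \mathbb{P}(M=m)(l/k)^m = P_M(l/k)$ and interchanging the finite and infinite sums gives \eqref{eq:istarprobJacques}.

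For the two moments I would avoid summing against the PMF and instead use $I_i = \mathbbm{1}\{N_i > 0\}$. By \eqref{eq:PGFn},
\[
\mathbb{E}[I_i] \;=\; 1 - \mathbb{P}(N_i = 0) \;=\; 1 - P_{N_i}(0) \;=\; 1 - P_M\!\Big(1-\tfrac{1}{k}\Big),
\]
so linearity immediately yields \eqref{eq:PRE_istar}. For the second moment, $\mathbb{E}\big[(\sum_{i=1}^k I_i)^2\big] = k\,\mathbb{E}[I_1] + k(k-1)\,\mathbb{E}[I_1 I_2]$ since $I_i^2 = I_i$. Here
\[
\mathbb{E}[I_1 I_2] \;=\; 1 - 2\,\mathbb{P}(N_1 = 0) + \mathbb{P}(N_1 = 0, N_2 = 0) \;=\; 1 - 2P_M\!\Big(1-\tfrac{1}{k}\Big) + P_M\!\Big(1-\tfrac{2}{k}\Big),
\]
using the joint PGF formula in \eqref{eq:PGFn} to evaluate $P_{N_1,N_2}(0,0) = P_M(1-2/k)$. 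Collecting the coefficients of $1$, $P_M(1-1/k)$ and $P_M(1-2/k)$ then gives $k^2 + k(1-2k)P_M(1-1/k) + k(k-1)P_M(1-2/k)$, which is \eqref{eq:PRE_istar2}.

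The only real subtlety is the inclusion--exclusion bookkeeping in the PMF step (keeping the signs and the index shift $l \mapsto j-l$ aligned); the moment computations are essentially one-line consequences of \eqref{eq:PGFn} once one writes $I_i$ as an indicator of $\{N_i > 0\}$. As a sanity check, one can verify that the moment formulas are consistent with summing $j$ and $j^2$ against the PMF \eqref{eq:istarprobJacques}, but the indicator route is cleaner and avoids reproving what the classical occupancy literature already provides.
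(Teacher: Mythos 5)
Your proof is correct. The PMF part matches the paper's: both condition on $M=m$, reduce to the classical occupancy problem, and decondition via $\sum_m (l/k)^m\,\mathbb{P}(M=m)=P_M(l/k)$ (the paper simply cites Johnson (1977) for the conditional formula where you sketch the inclusion--exclusion; your displayed conditional probability and the index change $l=j-s$ are right). For the moments, however, you take a genuinely different and arguably cleaner route. The paper deconditions the known conditional mean and \emph{variance} of the occupied-bin count, then reassembles the second moment via $\mathbb{E}[X^2\mid M=m]=\mathbb{V}ar(X\mid M=m)+\mathbb{E}[X\mid M=m]^2$ before averaging over $M$. You instead write $I_i=\mathbbm{1}\{N_i>0\}$ and work directly with the unconditional mixed-multinomial PGFs from \eqref{eq:PGFn}, so that $\mathbb{E}[I_1]=1-P_M(1-\tfrac1k)$ and $\mathbb{E}[I_1I_2]=1-2P_M(1-\tfrac1k)+P_M(1-\tfrac2k)$ fall out in one line each, and the algebra $k\,\mathbb{E}[I_1]+k(k-1)\,\mathbb{E}[I_1I_2]$ indeed collects to $k^2+k(1-2k)P_M(1-\tfrac1k)+k(k-1)P_M(1-\tfrac2k)$. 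What your route buys is that it never needs the conditional variance formula at all and makes the role of the joint PGF transparent; what the paper's route buys is a direct appeal to standard occupancy results without re-deriving anything. Both are sound and yield identical expressions.
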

Furthermore, by the interchangeability of the aisles, we know that:
\begin{corollary}
\label{cor:PRE_corSshaped}
The set of aisles, $\I$, in which a picker has to pick items satisfies:
\begin{align}
\label{eq:PRE_setIprob}
    \mathbb{P}\Big(\I = \{1,2,...,j\}\Big) =  \sum_{m=0}^j \binom{j}{m}(-1)^{j-m}P_M\Big(\frac{m}{k}\Big).
\end{align}
\end{corollary}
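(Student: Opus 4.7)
The plan is to exploit the symmetry built into the random storage model together with Lemma \ref{lemma:sshaped1}. Since items are assigned independently and uniformly to aisles, the joint distribution of $(N_1, N_2, \ldots, N_k)$, and hence of $(I_1, \ldots, I_k)$, is exchangeable. Consequently, for every subset $A \subseteq \{1, \ldots, k\}$ of size $j$ the probability $\mathbb{P}(\mathcal{I} = A)$ is the same. As the event $\{\sum_{i=1}^k I_i = j\}$ is the disjoint union of the $\binom{k}{j}$ events $\{\mathcal{I} = A\}$ over all $j$-subsets $A$, it follows that
\begin{align*}
    \mathbb{P}\bigl(\mathcal{I} = \{1,2,\ldots,j\}\bigr) = \frac{1}{\binom{k}{j}} \, \mathbb{P}\Bigl(\sum_{i=1}^k I_i = j\Bigr).
\end{align*}
Substituting the expression from \eqref{eq:istarprobJacques} cancels the prefactor $\binom{k}{j}$ and leaves exactly the sum $\sum_{m=0}^j \binom{j}{m}(-1)^{j-m} P_M(m/k)$, which is the claimed identity (after relabeling $l$ as $m$).

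As a sanity check, one can also derive the result directly, without invoking Lemma \ref{lemma:sshaped1}. Conditioning on $M = m$, the probability that all $m$ items fall into a prescribed set of $r$ aisles equals $(r/k)^m$, and deconditioning yields $P_M(r/k)$. The event $\{\mathcal{I} = \{1,\ldots,j\}\}$ can be rewritten as the event that no items land in aisles $j+1,\ldots,k$ and at least one item lands in each of aisles $1,\ldots,j$. Applying inclusion-exclusion over the latter $j$ "nonempty" constraints and using the observation above for each term produces $\sum_{s=0}^{j} (-1)^s \binom{j}{s} P_M((j-s)/k)$, which upon the substitution $m = j-s$ matches the stated formula.

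There is no real obstacle here; the only mildly delicate point is noticing that one does not need to integrate out anything or work with the joint law of $\mathcal{I}$, because exchangeability reduces the problem to a single marginal already computed in Lemma \ref{lemma:sshaped1}. The reindexing step in the inclusion-exclusion variant is purely bookkeeping.
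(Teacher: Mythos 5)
Your first argument is exactly the paper's proof: exchangeability of the aisles makes every $j$-subset equally likely, so dividing \eqref{eq:istarprobJacques} by $\binom{k}{j}$ gives the claim. The inclusion--exclusion derivation you add as a sanity check is also correct, but the main route is the same as the paper's, just spelled out in more detail.
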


\begin{example}[$M \sim \mathrm{Poi}(\lambda) + 1$]
This case gives rise to a nice expression of \eqref{eq:istarprobJacques}, since the sum can be rewritten as the derivative of the binomial expansion:
\begin{align*}
    \mathbb{P}\Big(\sum_{i=1}^k I_i = j\Big)&=\binom{k}{j}\sum_{l=0}^{j}(-1)^{j-l}\binom{j}{l}\frac{l}{k}\exp\Big(-\lambda+\lambda\frac{l}{k}\Big)\\
    &=\binom{k-1}{j-1}\exp(-\lambda/k)^{k-j+1}\Big(1-\exp(-\lambda/k)\Big)^{j-1}.
\end{align*}
Remark that this is the probability mass function of a shifted Binomial distribution with $k-1$ trials and success probability $1-\exp(-\lambda/k)$.
\end{example}
Using Lemma \ref{lemma:sshaped1} we now find the probability of having an odd number of non-empty aisles:
\begin{lemma}
\label{lemma:Iodd}
For the random variable $I_{\text{odd}}$ we have 
\begin{align}
\label{eq:PRE_iodd}
    \mathbb{E}[I_{\text{odd}}] = \mathbb{E}[I^2_{\text{odd}}]= \sum_{l=0}^{k-1} \binom{k}{l}(-1)^{l+1}2^{k-l-1}P_M\Big(\frac{l}{k}\Big) + \mathbbm{1}\{k \text{ is odd}\}.
\end{align}
\end{lemma}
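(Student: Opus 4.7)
The first equality is immediate: since $I_{\text{odd}}$ is a $\{0,1\}$-valued indicator, $I_{\text{odd}}^2=I_{\text{odd}}$, so $\mathbb{E}[I_{\text{odd}}]=\mathbb{E}[I_{\text{odd}}^2]=\mathbb{P}\bigl(\sum_{i=1}^k I_i\text{ is odd}\bigr)$. The work is to evaluate this probability and show it equals the stated sum.

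The plan is to start from Lemma \ref{lemma:sshaped1}, which gives the distribution of $\sum_i I_i$, and sum over odd values of $j$:
\begin{align*}
\mathbb{E}[I_{\text{odd}}] \;=\; \sum_{\substack{j=1 \\ j\text{ odd}}}^{k} \binom{k}{j}\sum_{l=0}^{j}(-1)^{j-l}\binom{j}{l}P_M\!\Big(\tfrac{l}{k}\Big).
\end{align*}
I would then swap the order of summation, grouping by $l$, and apply the trinomial identity $\binom{k}{j}\binom{j}{l}=\binom{k}{l}\binom{k-l}{j-l}$ to factor out the dependence on $j$. Substituting $i=j-l$ yields, for each $l$, an inner sum of the form $\sum_{i}\binom{k-l}{i}(-1)^{i}$ restricted to those $i$ with $i+l$ odd.

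The main technical step is to evaluate this restricted alternating binomial sum. For $l<k$ the upper limit $n=k-l$ is at least $1$, and the standard identities $(1-1)^n=0$ and $(1+1)^n=2^n$ (adding and subtracting them) give $\sum_{i\text{ even}}(-1)^i\binom{n}{i}=2^{n-1}$ and $\sum_{i\text{ odd}}(-1)^i\binom{n}{i}=-2^{n-1}$. A quick case split on the parity of $l$ (needing $i$ odd when $l$ is even, and $i$ even when $l$ is odd) shows that in both cases the inner sum equals $(-1)^{l+1}2^{k-l-1}$. This produces exactly the $l=0,\ldots,k-1$ contribution in the claimed formula.

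The remaining obstacle is the boundary term $l=k$, where $n=k-l=0$ and the identities above do not apply. Here only $i=0$ (so $j=k$) contributes, and this contributes if and only if $k$ itself is odd, in which case the value is $\binom{k}{k}P_M(1)\cdot 1=1$. This produces the extra $\mathbbm{1}\{k\text{ is odd}\}$ term, completing the identification with the right-hand side of \eqref{eq:PRE_iodd}.
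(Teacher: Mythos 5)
Your proposal is correct and follows essentially the same route as the paper: sum the distribution from Lemma \ref{lemma:sshaped1} over odd $j$, interchange the order of summation via $\binom{k}{j}\binom{j}{l}=\binom{k}{l}\binom{k-l}{j-l}$, evaluate the restricted alternating binomial sum as $(-1)^{l+1}2^{k-l-1}$ (the paper cites \citet{Riordan1958} for this identity where you derive it from $(1\pm 1)^{k-l}$), and isolate the $l=k$ boundary term to obtain $\mathbbm{1}\{k \text{ is odd}\}$. No gaps.
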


\begin{example}[$M\sim \mathrm{Poi}(\lambda) + 1$]
In this case we can rewrite the sum in \eqref{eq:PRE_iodd} as the binomial sum:
\begin{align*}
    \mathbb{E}[I_\text{odd}] &= \exp(-\lambda)\sum_{l=1}^{k-1} \binom{k-1}{l-1}(-1)^{l+1}2^{k-l-1}\exp\Big(\frac{\lambda l}{k}\Big) + \mathbbm{1}\{k \text{ is odd}\}\\
   &=\frac{1}{2}\exp\Big(-\lambda+ \frac{\lambda}{k}\Big)\Big(2-\exp\big(\frac{\lambda}{k}\big)\Big)^{k-1}+\frac{1}{2}\mathbbm{1}\{k \text{ is odd}\}.
\end{align*}
A plot of numerical results for the case of deterministic and shifted Poisson order sizes can be found in Figure \ref{fig:IoddExample}. Here we see odd fluctuations for small $M$ in the deterministic case.
\begin{figure}[h!b]
    \centering
    \makebox[\textwidth][c]{\includegraphics[width = 1.33\textwidth]{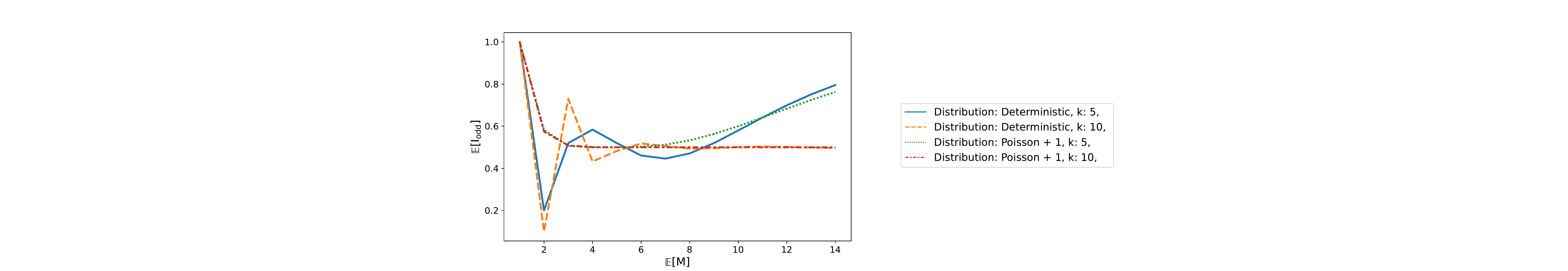}}
    \caption{The expectation of $I_{\text{odd}}$ for different order size distributions and values of $k$.}
    \label{fig:IoddExample}
\end{figure}
\end{example}
Similar to Lemmas \ref{lemma:ShortcutPGF}-\ref{lemma:ShortcutPGF3} we also consider the function $\mathbb{E}[z^{N_i}\mathbbm{1}\big\{\I = \{1,2,...,j\}\big\}]$, which we extensively use in Section \ref{sec:Sshaped}. 
\begin{lemma}
\label{lemma:SshapedPGF}
The number of items in an aisle satisfies for $i\leq j:$
\begin{align}
\label{eq:SshapedPGF}
    \mathbb{E}\Big[z^{N_i}\mathbbm{1}\big\{\I = \{1,2,...,j\}\big\}\Big] =    \sum_{l=0}^{j-1} \binom{j-1}{l}(-1)^{j-1-l}\Big\{P_M\Big(\frac{z+l}{k}\Big) - P_M\Big(\frac{l}{k}\Big)\Big\}.
\end{align}
\end{lemma}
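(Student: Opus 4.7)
The plan is to rewrite the event $\{\mathcal{I} = \{1,\ldots,j\}\}$ as the intersection of two simpler events, namely that \emph{all} $M$ items land in the aisles $\{1,\ldots,j\}$ and that \emph{none} of these $j$ aisles is empty. The first event is easy to handle with the PGF of $M$ (since, conditional on $M=m$, the items are i.i.d.\ uniform on $\{1,\ldots,k\}$), while the non-emptiness constraint is treated by inclusion--exclusion over subsets $S \subseteq \{1,\ldots,j\}$ of aisles forced to be empty.

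Concretely, I would write
\begin{align*}
\mathbb{E}\big[z^{N_i}\mathbbm{1}\{\mathcal{I}=\{1,\ldots,j\}\}\big]
 = \sum_{S\subseteq\{1,\ldots,j\}}(-1)^{|S|}\,
   \mathbb{E}\big[z^{N_i}\mathbbm{1}\{\text{all items lie in }\{1,\ldots,j\}\setminus S\}\big].
\end{align*}
Conditioning on $M=m$, each of the $m$ items independently lies in aisle $i$ with probability $1/k$ or in $\{1,\ldots,j\}\setminus(S\cup\{i\})$ with probability $(j-|S|-\mathbbm{1}\{i\notin S\})/k$, so a one-line multinomial expansion gives
\begin{align*}
\mathbb{E}\big[z^{N_i}\mathbbm{1}\{\text{all items in }\{1,\ldots,j\}\setminus S\}\mid M=m\big]
 = \begin{dcases}
     \Big(\tfrac{z+j-|S|-1}{k}\Big)^m, & i\notin S,\\[2pt]
     \Big(\tfrac{j-|S|}{k}\Big)^m, & i\in S,
   \end{dcases}
\end{align*}
the second line using $z^{N_i}=1$ when $N_i=0$. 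Deconditioning on $M$ replaces $(\cdot)^m$ by $P_M(\cdot)$.

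I would then split the sum over $S$ by cardinality $s=|S|$ and by whether $i\in S$. The number of $S$ with $|S|=s$ and $i\notin S$ is $\binom{j-1}{s}$, and with $i\in S$ is $\binom{j-1}{s-1}$. This gives
\begin{align*}
\mathbb{E}\big[z^{N_i}\mathbbm{1}\{\mathcal{I}=\{1,\ldots,j\}\}\big]
 = \sum_{s=0}^{j-1}(-1)^s\binom{j-1}{s}P_M\Big(\tfrac{z+j-1-s}{k}\Big)
 - \sum_{s=0}^{j-1}(-1)^s\binom{j-1}{s}P_M\Big(\tfrac{j-1-s}{k}\Big),
\end{align*}
after reindexing $s\mapsto s-1$ in the second group. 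The substitution $l=j-1-s$ and using $\binom{j-1}{j-1-l}=\binom{j-1}{l}$ yields precisely \eqref{eq:SshapedPGF}.

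The only subtle step is the case split on whether $i\in S$, since it determines whether $z^{N_i}$ contributes a factor of $z/k$ (when $i$ is among the admissible aisles) or collapses to $1$ (when aisle $i$ is forced to be empty); everything else is bookkeeping. Note also that the assumption $i\le j$ is used only to ensure that aisle $i$ lies inside $\{1,\ldots,j\}$, so that both cases are non-vacuous.
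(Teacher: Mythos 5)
Your proof is correct. Both arguments are at heart the same inclusion--exclusion over which aisles of $\{1,\ldots,j\}$ are forced to be empty, but you organize it around different elementary events than the paper does. The paper first uses interchangeability of aisles to reduce to $\mathbb{E}\big[z^{N_j}\mathbbm{1}\{\I=\{1,\ldots,j\}\}\big]$, then expresses this by inclusion--exclusion as an alternating sum of the quantities $\mathbb{E}\big[z^{N_l}\mathbbm{1}\{k^+=l\}\big]$, and finally substitutes the closed form for these from Lemma \ref{lemma:ShortcutPGF}; its base events are thus $\{k^+=l\}$, and the proof leans on that earlier lemma. You instead decompose over subsets $S\subseteq\{1,\ldots,j\}$ of aisles forced empty inside the event ``all items lie in $\{1,\ldots,j\}$,'' and evaluate each term from scratch by a multinomial expansion conditional on $M=m$, with the necessary case split on whether $i\in S$ (where $z^{N_i}$ collapses to $1$). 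What your route buys is self-containment --- it does not invoke Lemma \ref{lemma:ShortcutPGF} or the $k^+$ machinery at all --- and it makes the $\binom{j-1}{s}$ versus $\binom{j-1}{s-1}$ counting completely explicit; what the paper's route buys is reuse of an already-established building block and consistency with the way the neighbouring interaction terms are computed. Your bookkeeping (the reindexing of the $i\in S$ sum and the substitution $l=j-1-s$) checks out and reproduces \eqref{eq:SshapedPGF} exactly.
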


\section{Results}
\label{sec:Results}
In this section we give exact expressions for the first two moments of the total picking time. For this, we use the results from Section \ref{sec:Prelim} and discuss each routing policy separately: return routing (Section \ref{sec:Return}), midpoint routing (Section \ref{sec:Midpoint}), largest gap routing (Section \ref{sec:Midpoint}) and S-shaped routing (Section \ref{sec:Sshaped}). The proofs of the statements in this section are deferred to Appendix \ref{app:ProofsRes}.

\subsection{Return routing} \label{sec:Return}
We first consider the return routing policy, since its analysis is rather simple, yet provides a nice overview of some of the techniques we use and some of the intricacies of warehouse routing problems.

\begin{theorem}[Return Routing]
\label{thm:ReturnRouting}
The first moment of $T$ is given by:
\begin{align}
    \mathbb{E}[T] = \mathbb{E}[M]\mathbb{E}[P] + 
    \frac{2lk}{v}\mathbb{E}[A_i] + \frac{2w_a}{v}\Big(\mathbb{E}\big[k^+\big]-1\Big),
\end{align}
with $\mathbb{E}[A_i]$ as given in \eqref{eq:PRE_expA} and $\mathbb{E}[k^+]$ as in \eqref{eq:PRE_expkplus}. The second moment of $T$ is given by:
\begin{alignat}{2}
 &\mathbb{E}[T^2] &&=\mathbb{E}[M(M-1)]\mathbb{E}[P]^2 + \mathbb{E}[M]\mathbb{E}[P^2] +  \frac{4l^2}{v^2}\bigg(k\mathbb{E}[A_i^2]+k(k-1)\mathbb{E}[A_iA_j]\bigg) \nonumber \\
 & && \quad + \frac{4w_a^2}{v^2}\bigg(\mathbb{E}\big[{k^+}^2\big]-2\mathbb{E}\big[{k^+}\big]+1\bigg) + \frac{4lk}{v}\mathbb{E}[P]\mathbb{E}[MA_i]  \\
 & &&\quad +  \frac{4w_a}{v}\mathbb{E}[P]\bigg(\mathbb{E}[Mk^+]- \mathbb{E}[M]\bigg) + \frac{8w_al}{v^2}\bigg(\sum_{i=1}^k \mathbb{E}[A_ik^+]-k\mathbb{E}[A_i]\bigg),\nonumber 
\end{alignat}
where $\mathbb{E}[A_i^2],\mathbb{E}[A_iA_j]$ are as in \eqref{eq:PRE_expA} and \eqref{eq:PRE_corA} respectively, $\mathbb{E}[{k^+}^2]$ as in \eqref{eq:PRE_expkplus}, $\mathbb{E}[MA_i]$ and $\mathbb{E}[Mk^+]$ as in \eqref{eq:ReturnCorP}, \eqref{eq:PRE_Mkplus} and $\mathbb{E}[A_ik^+]$ as in \eqref{eq:ReturnCorShortcut}.
\end{theorem}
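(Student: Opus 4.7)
The plan is to write $T = T_1 + T_2 + T_3$ following \eqref{eq:ReturnRouting}, with $T_1 = \sum_{i=1}^M P_i$ the picking time, $T_2 = (2l/v)\sum_{i=1}^k A_i$ the in-aisle travel time, and $T_3 = (2w_a/v)(k^+ - 1)$ the cross-aisle travel time, and then to compute both moments by linearity.  Throughout, I rely on two structural facts: the picking times $P_1, P_2, \ldots$ are i.i.d.\ and independent of $M$ and of all spatial random variables $(N_i, A_i, k^+)$; and the $k$ aisles are exchangeable, so all $A_i$ share the marginal computed in Lemma \ref{lemma:PRE_momentsA} and all pairs $(A_i, A_j)$, $i \neq j$, share the joint law captured by \eqref{eq:PRE_corA}.

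For the first moment, Wald's identity gives $\mathbb{E}[T_1] = \mathbb{E}[M]\mathbb{E}[P]$; exchangeability together with Lemma \ref{lemma:PRE_momentsA} yields $\mathbb{E}[T_2] = (2lk/v)\mathbb{E}[A_i]$; and Lemma \ref{lemma:kplus} yields $\mathbb{E}[T_3]$.  For the second moment I expand $\mathbb{E}[T^2] = \sum_{a=1}^{3} \mathbb{E}[T_a^2] + 2\sum_{a<b}\mathbb{E}[T_a T_b]$.  The three diagonal pieces are routine: conditioning on $M$ and using the i.i.d.\ structure of the picks produces the compound-sum identity $\mathbb{E}[T_1^2] = \mathbb{E}[M(M-1)]\mathbb{E}[P]^2 + \mathbb{E}[M]\mathbb{E}[P^2]$; expanding $\bigl(\sum_i A_i\bigr)^2$ and separating the $k$ diagonal terms from the $k(k-1)$ off-diagonal terms (by exchangeability) produces $\mathbb{E}[T_2^2]$; and expanding $(k^+-1)^2$ produces $\mathbb{E}[T_3^2]$.

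The two cross terms involving $T_1$ are handled by a single observation: since the $P_j$ are independent of everything else, $\mathbb{E}[T_1 X] = \mathbb{E}[P]\mathbb{E}[MX]$ for any $X$ measurable with respect to $(M, \text{locations})$.  Taking $X = \sum_i A_i$ reduces $\mathbb{E}[T_1 T_2]$ to computing $\mathbb{E}[MA_i]$, which I would obtain via the tail representation $\mathbb{E}[MA_i] = \mathbb{E}[M] - \int_0^1 \mathbb{E}[M x^{N_i}]\,\dx x$ combined with $\mathbb{E}[M x^{N_i}] = (1 - 1/k + x/k)\,P_M'(1 - 1/k + x/k)$ (obtained by conditioning on $M$ and using the marginal PGF of $N_i$ recorded after \eqref{eq:PGFn}); this is the closed form \eqref{eq:ReturnCorP}.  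Taking $X = k^+ - 1$ reduces $\mathbb{E}[T_1 T_3]$ immediately to $\mathbb{E}[Mk^+]$, which is exactly Lemma \ref{lemma:intMkplus}.

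The main obstacle is the remaining cross term $\mathbb{E}[T_2 T_3] = (4lw_a/v^2)\bigl(\sum_{i=1}^k \mathbb{E}[A_i k^+] - k\mathbb{E}[A_i]\bigr)$, since $A_i$ and $k^+$ are correlated through $M$: a large order tends to inflate both quantities.  My plan is to decompose $\mathbb{E}[A_i k^+] = \sum_{j=1}^k j\,\mathbb{E}[A_i \mathbbm{1}\{k^+=j\}]$ and then invoke Lemma \ref{lemma:Interact-A_i/k^+} to rewrite each summand as an integral of $\mathbb{E}[x^{N_i}\mathbbm{1}\{k^+ = j\}]$.  The integrand is supplied by Lemma \ref{lemma:ShortcutPGF}, which splits into the three regimes $j < i$, $j = i$, $j > i$.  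The most delicate step is the bookkeeping: summing over $j$ in each regime and then over $i$, using the fact that adjacent differences $P_M((j-1+z)/k) - P_M((j-2+z)/k)$ telescope across $j$, which collapses the three-part formula into the closed form \eqref{eq:ReturnCorShortcut}.  Assembling the three diagonal contributions with the three cross terms produces the stated formula for $\mathbb{E}[T^2]$.
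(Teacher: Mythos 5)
Your proposal is correct and follows the same overall route as the paper: decompose $T$ into the picking, within-aisle and cross-aisle components, expand the square by linearity and exchangeability, and reduce everything to the moments from Section \ref{sec:Prelim} plus the two interaction terms of Proposition \ref{prop:ReturnCorP}. Your treatment of $\mathbb{E}[A_i k^+]$ (sum over $j$, Lemma \ref{lemma:Interact-A_i/k^+} plus the three-regime formula of Lemma \ref{lemma:ShortcutPGF}, with the $j<i$ regime contributing nothing since $A_i=0$ there) is exactly the paper's argument. The one place you diverge is $\mathbb{E}[MA_i]$: you use the tail representation $\mathbb{E}[MA_i]=\mathbb{E}[M]-\int_0^1\mathbb{E}[Mx^{N_i}]\,\dx x$ together with $\mathbb{E}[Mx^{N_i}]=\big(1-\tfrac{1}{k}+\tfrac{x}{k}\big)P_M'\big(1-\tfrac{1}{k}+\tfrac{x}{k}\big)$, whereas the paper writes $\mathbb{E}[M(1-A_i)]=\mathbb{E}[M/(N_i+1)]$ via the conditional mean of the maximum of uniforms and then splits $M=\sum_j N_j$ into the $j=i$ and $j\neq i$ contributions. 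Both are valid and both land on \eqref{eq:ReturnCorP}; your route needs one integration by parts (substituting $u=1-\tfrac{1}{k}+\tfrac{x}{k}$ and integrating $uP_M'(u)$) to reach the stated closed form, a step worth writing out, while the paper's route trades that for the evaluation of $\mathbb{E}[N_j/(N_i+1)]$ from the joint PGF. No gaps.
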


Remark that most expectations in the expression of Theorem \ref{thm:ReturnRouting} have been derived in Section \ref{sec:Prelim}. In fact we already have obtained all results necessary for the first moment. For the second moment, we need the following interaction terms:

\begin{proposition}
\label{prop:ReturnCorP}
The interactions with the furthest item location in an aisle satisfy:
\begin{alignat}{2}
    \label{eq:ReturnCorP}
    &\mathbb{E}\Big[MA_i\Big] &&=\mathbb{E}[M]-k + (k-1)P_M\big(1-\frac{1}{k}\big) + \int_{x=0}^1P_M\big(1-\frac{1}{k}+\frac{x}{k}\big)\mathrm{d}x.\\ 
    \label{eq:ReturnCorShortcut}
    &\mathbb{E}[A_ik^+] &&=k\mathbb{E}[A_i] - \sum_{j=i}^{k-1}\bigg\{ P_M\Big(\frac{j}{k}\Big)-\int_{x=0}^{1} P_M\Big(\frac{j-1+x}{k}\Big)\mathrm{d}x\bigg\}.
\end{alignat}
\end{proposition}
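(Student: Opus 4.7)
The key idea throughout is the tail representation $A_i = \int_{x=0}^1 \mathbbm{1}\{A_i > x\}\dx x$, which reduces computations involving $A_i$ to computing probabilities of the event $\{A_i \leq x\}$. Conditional on $M=m$, this event occurs iff each of the $m$ items either lies outside aisle $i$ (probability $1-\frac{1}{k}$) or lies in aisle $i$ at position at most $x$ (probability $\frac{x}{k}$). Hence $\mathbb{P}(A_i\leq x\mid M=m) = (1-\frac{1}{k}+\frac{x}{k})^m$, and deconditioning yields $\mathbb{P}(A_i\leq x) = P_M(1-\frac{1}{k}+\frac{x}{k})$.

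For \eqref{eq:ReturnCorP}, I would first write
\[
\mathbb{E}[MA_i] = \int_{x=0}^1 \mathbb{E}\Big[M\mathbbm{1}\{A_i > x\}\Big]\dx x = \mathbb{E}[M] - \int_{x=0}^1 \mathbb{E}\Big[M\Big(1-\tfrac{1}{k}+\tfrac{x}{k}\Big)^M\Big]\dx x.
\]
Using the identity $\mathbb{E}[MY^M]=Y P_M'(Y)$ evaluated at $Y = 1-\frac{1}{k}+\frac{x}{k}$, substituting $y=Y$ as the integration variable (Jacobian $k$), and applying integration by parts on $\int y P_M'(y)\dx y$ (using $P_M(1)=1$), I get a boundary term $1 - (1-\frac{1}{k})P_M(1-\frac{1}{k})$ minus an integral of $P_M(y)$. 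Rescaling back to $x$ turns that remaining integral into $\int_0^1 P_M(1-\frac{1}{k}+\frac{x}{k})\dx x$, and collecting everything gives exactly \eqref{eq:ReturnCorP}.

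For \eqref{eq:ReturnCorShortcut}, I would exploit the alternative representation $k^+ = k - \sum_{j=0}^{k-1}\mathbbm{1}\{k^+\leq j\}$, so that $\mathbb{E}[A_i k^+] = k\mathbb{E}[A_i] - \sum_{j=0}^{k-1}\mathbb{E}[A_i\mathbbm{1}\{k^+\leq j\}]$. For $j<i$ the event $\{k^+\leq j\}$ forces $N_i=0$, hence $A_i=0$, so those terms vanish and the sum effectively starts at $j=i$. For $j\geq i$, I would condition on $(M,N_i)$: given $M=m$, items are placed independently and uniformly among the $k$ aisles, so
\[
\mathbb{P}(N_i = n,\ k^+\leq j\mid M=m) = \binom{m}{n}\Big(\frac{1}{k}\Big)^n\Big(\frac{j-1}{k}\Big)^{m-n},
\]
the factor $((j-1)/k)^{m-n}$ accounting for placing the remaining $m-n$ items in the $j-1$ aisles $\{1,\dots,j\}\setminus\{i\}$. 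Combined with $\mathbb{E}[A_i\mid N_i = n] = \int_{x=0}^1 (1-x^n)\dx x$ and the binomial theorem, summing over $n$ yields
\[
\mathbb{E}\Big[A_i\mathbbm{1}\{k^+\leq j\}\,\Big|\,M=m\Big] = \Big(\frac{j}{k}\Big)^m - \int_{x=0}^1 \Big(\frac{j-1+x}{k}\Big)^m\dx x,
\]
and deconditioning over $M$ converts the $m$-th powers into the corresponding values of $P_M$. Substituting into the formula for $\mathbb{E}[A_i k^+]$ above delivers \eqref{eq:ReturnCorShortcut}.

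Neither identity is conceptually deep; the main bookkeeping challenges are the integration-by-parts manipulation in the first part and, in the second part, correctly identifying $(j-1)/k$ rather than $j/k$ as the per-item probability of landing in $\{1,\dots,j\}\setminus\{i\}$, as well as verifying that the $j<i$ terms vanish so that the sum can be truncated.
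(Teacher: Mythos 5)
Your proof is correct, and both halves take a genuinely different computational route from the paper's. For \eqref{eq:ReturnCorP} the paper writes $\mathbb{E}[M(1-A_i)]=\mathbb{E}[M/(N_i+1)]$, splits $M=N_i+\sum_{j\neq i}N_j$, and evaluates $\mathbb{E}[1/(N_i+1)]$ and $\mathbb{E}[N_j/(N_i+1)]$ separately through the joint PGF of $(N_i,N_j)$; you instead integrate the tail probability $\mathbb{P}(A_i>x)$ against $M$, use $\mathbb{E}[MY^M]=YP_M'(Y)$, and finish with a change of variables plus one integration by parts. Your version checks out (the boundary term $1-(1-\tfrac1k)P_M(1-\tfrac1k)$, scaled by $k$, yields exactly the $-k+(k-1)P_M(1-\tfrac1k)$ in the statement) and avoids the bivariate-PGF bookkeeping. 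For \eqref{eq:ReturnCorShortcut} the paper computes $\sum_{j\ge i} j\,\mathbb{E}[A_i\mathbbm{1}\{k^+=j\}]$ via Lemma \ref{lemma:Interact-A_i/k^+} and the case-split formula of Lemma \ref{lemma:ShortcutPGF}, which then needs a telescoping/summation-by-parts manipulation to reach the stated form $k\mathbb{E}[A_i]-\sum_{j=i}^{k-1}\{\cdots\}$; your identity $k^+=k-\sum_{j=0}^{k-1}\mathbbm{1}\{k^+\le j\}$ lands on that form directly, and your joint law $\mathbb{P}(N_i=n,\,k^+\le j\mid M=m)=\binom{m}{n}(1/k)^n((j-1)/k)^{m-n}$ is precisely the cumulative analogue of the paper's \eqref{eq:jointmassfunctioNiKs}, with the vanishing of the $j<i$ terms handled correctly. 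The only step worth making explicit in a final write-up is the conditional independence of $A_i$ and the event $\{k^+\le j\}$ given $N_i$ (within-aisle positions are independent of the aisle assignment), which both you and the paper use tacitly when reducing $\mathbb{E}[A_i\mathbbm{1}\{\cdot\}]$ to an integral of $\mathbb{E}[x^{N_i}\mathbbm{1}\{\cdot\}]$.
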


\subsection{Midpoint routing} \label{sec:Midpoint}
The midpoint and return routing policies are rather similar with respect to the within aisle travel times, since both policies require the picker to pick items up until the furthest item in an (half-) aisle. However, the derivation in this section is different, since the picker, in the case of midpoint routing, completely traverses the first and last aisle in which items have to be picked. This means that we cannot use linearity of expectations to tackle the term: $\sum_{i=k^-+1}^{k^+-1}A_i^f$. \\
\textbf{Remark:} Since the half-aisles are interchangeable the expressions for $A_i^f$ and $A_i^b$ are equivalent. Throughout this section we therefore derive expressions for $A_i^f$.

\begin{theorem}[Midpoint routing]
\label{thm:MidpointRouting}
The first moment of $T$ is given by:
\begin{align}
    \mathbb{E}[T] = \mathbb{E}[M]\mathbb{E}[P] + 
    \frac{2l}{v}\mathbb{E}\Big[\sum_{i=k^-+1}^{k^+ -1}A_i^f\Big] + \frac{2w_a}{v}\Big(\mathbb{E}\big[k^+\big]-1\Big) + \frac{2l}{v},
\end{align}
with $\mathbb{E}\Big[\sum_{i=k^-+1}^{k^+ -1}A_i^f\Big]$ as in \eqref{eq:MidpointExpA} and $\mathbb{E}\big[k^+\big]$ as in \eqref{eq:PRE_expkplus}.
The second moment of $T$ equals:
\begin{alignat}{2}
 &\mathbb{E}[T^2] &&= \mathbb{E}[M(M-1)]\mathbb{E}[P]^2 + \mathbb{E}[M]\mathbb{E}[P^2] + \frac{l^2}{v^2}\mathbb{E}\bigg[\Big(\sum_{i=k^-+1}^{k^+ -1}\big\{A_i^f+A_i^b\big\}\Big)^2\bigg] \nonumber\\
 & &&\quad+ \frac{4w_a^2}{v^2}\bigg(\mathbb{E}\big[{k^+}^2\big]-2\mathbb{E}\big[{k^+}\big]+1\bigg) + \frac{4l^2}{v^2} + \frac{4l}{v}\mathbb{E}[P]\mathbb{E}\Big[M\sum_{i=k^-+1}^{k^+-1}A_i^f\Big]  \\
 & &&\quad+   \frac{4w_a}{v}\mathbb{E}[P]\bigg(\mathbb{E}\Big[Mk^+\Big]- \mathbb{E}[M]\bigg) + \frac{8w_al}{v^2}\bigg(\mathbb{E}\Big[k^+\sum_{j=k^-+1}^{k^+-1}A_i^f\Big]-\mathbb{E}\Big[\sum_{j=k^-+1}^{k^+-1}A_i^f\Big]\bigg)\nonumber\\
 & &&\quad+\frac{4l}{v}\bigg(\mathbb{E}[T]-\frac{2l}{v}\bigg), \nonumber
\end{alignat}
with $\mathbb{E}\bigg[\Big(\sum_{i=k^-+1}^{k^+ -1}\big\{A_i^f+A_i^b\big\}\Big)^2\bigg]$ as in \eqref{eq:MidpointSecA}, $\mathbb{E}\big[{k^+}^2\big]$ as in \eqref{eq:PRE_expkplus} and $\mathbb{E}\Big[M\sum_{i=k^-+1}^{k^+-1}A_i^f\Big]$, $\mathbb{E}\Big[Mk^+\Big]$ and $\mathbb{E}\Big[k^+\sum_{j=k^-+1}^{k^+-1}A_i^f\Big]$ as given in \eqref{eq:MidpointAsumP}, \eqref{eq:PRE_Mkplus} and \eqref{eq:MidpointAks} respectively.
\end{theorem}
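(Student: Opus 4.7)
The plan is to decompose the picking time in \eqref{eq:MidpointRouting} into four additive pieces,
\begin{equation*}
T = T_P + T_A + T_L + T_C,
\end{equation*}
where $T_P = \sum_{i=1}^M P_i$ is the pure picking time, $T_A = (l/v)\sum_{i=k^-+1}^{k^+-1}(A_i^f+A_i^b)$ is the partial within-aisle travel, $T_L = 2l/v$ is the deterministic cost of completely traversing the two boundary aisles, and $T_C = (2w_a/v)(k^+-1)$ is the cross-aisle travel. Expanding $T^2$ as the sum of four diagonal and six cross products $\mathbb{E}[T_X T_Y]$ then lets me read each line of the theorem off directly, provided I can identify every cross-expectation.

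For the first moment, linearity combined with Wald's identity yields $\mathbb{E}[T_P] = \mathbb{E}[M]\mathbb{E}[P]$ (using independence of the i.i.d.\ pick times from the routing randomness), while Lemma~\ref{lemma:kplus} supplies $\mathbb{E}[T_C]$. By the interchangeability of front and back half-aisles, $\mathbb{E}[T_A] = (2l/v)\mathbb{E}\bigl[\sum_{i=k^-+1}^{k^+-1} A_i^f\bigr]$, and this expectation is the content of \eqref{eq:MidpointExpA}.

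For the second moment, the picking-time diagonal $\mathbb{E}[T_P^2] = \mathbb{E}[M(M-1)]\mathbb{E}[P]^2+\mathbb{E}[M]\mathbb{E}[P^2]$ is the standard compound-sum identity, $\mathbb{E}[T_L^2] = 4l^2/v^2$ is trivial, and $\mathbb{E}[T_C^2]$ follows from Lemma~\ref{lemma:kplus}; the piece $\mathbb{E}[T_A^2]$ is exactly \eqref{eq:MidpointSecA}. Among the cross-terms, independence of $\{P_i\}$ allows me to factor $\mathbb{E}[T_P T_A]$ and $\mathbb{E}[T_P T_C]$ as $\mathbb{E}[P]$ times the interactions in \eqref{eq:MidpointAsumP} and Lemma~\ref{lemma:intMkplus} respectively; $\mathbb{E}[T_A T_C]$ matches \eqref{eq:MidpointAks}. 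Because $T_L$ is deterministic, the three cross-terms $2\mathbb{E}[T_L(T_P + T_A + T_C)]$ collapse to $(4l/v)(\mathbb{E}[T] - 2l/v)$, yielding the final line of the statement.

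The real work, and the main obstacle I expect, lies in the evaluation of the random-range sums behind \eqref{eq:MidpointExpA}, \eqref{eq:MidpointSecA}, \eqref{eq:MidpointAsumP}, and \eqref{eq:MidpointAks}. My scheme is to rewrite
\begin{equation*}
\sum_{i=k^-+1}^{k^+-1} X_i = \sum_{1\le l<i<j\le k} X_i\,\mathbbm{1}\{k^+=j,\,k^-=l\},
\end{equation*}
so that expectations become triple sums over $(l,i,j)$ weighted by the joint-PGF quantities of Lemma~\ref{lemma:ShortcutPGF2}. For $X_i = A_i^f$, I plan to use the conditional relation $\mathbb{P}(A_i^f\le x\mid N_i^f = n) = x^n$ and the tail-integral identity
\begin{equation*}
\mathbb{E}[A_i^f\,\mathbbm{1}\{k^+=j,\,k^-=l\}] = \mathbb{P}(k^+=j,\,k^-=l) - \int_0^1 \hat{P}_{N_i^f}(x;j,l)\,\mathrm{d}x,
\end{equation*}
in the spirit of Lemma~\ref{lemma:Interact-A_i/k^+} and Corollary~\ref{cor:PRE_expAf}. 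The squared and coupled contributions then require the two-index joint PGF of Lemma~\ref{lemma:ShortcutPGF3} to handle pairs $A_i^f A_{i^*}^f$ with $i\ne i^*$, and the interaction with $M$ in \eqref{eq:MidpointAsumP} is obtained by differentiating the joint PGF in the auxiliary variable. The bookkeeping challenge is to keep the diagonal ($i=i^*$) and off-diagonal ($i\ne i^*$) contributions straight when squaring $T_A$, and to pair same-aisle front/back quantities $(A_i^f,A_i^b)$ through Lemma~\ref{lemma:ShortcutPGF2} rather than Lemma~\ref{lemma:ShortcutPGF3}.
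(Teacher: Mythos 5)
Your decomposition $T = T_P + T_A + T_L + T_C$ of \eqref{eq:MidpointRouting} and the term-by-term expansion of $\mathbb{E}[T^2]$ --- with the front/back half-aisle symmetry reducing $T_A$-terms to $A_i^f$, independence of the $P_i$ factoring out $\mathbb{E}[P]$, and the deterministic $T_L$ collapsing its three cross-terms into $\frac{4l}{v}(\mathbb{E}[T]-\frac{2l}{v})$ --- is exactly how the theorem follows, and it matches the paper, which leaves this expansion implicit and delegates the constituent expectations to Lemma \ref{lemma:ConditionalMomentsAiMidpoint} and Proposition \ref{prop:MidpointLargeProp} just as you do. One small correction to your sketch of the supporting computations: for off-diagonal pairs $A_i^fA_{i^*}^f$ with $i\neq i^*$ you do not need a half-aisle analogue of Lemma \ref{lemma:ShortcutPGF3}, because all half-aisles strictly between $k^-$ and $k^+$ are exchangeable and their two-index joint PGF coincides with that of Lemma \ref{lemma:ShortcutPGF2}; this is why \eqref{eq:MidpointSecA} uses the single representative $\mathbb{E}\big[A_2^fA_2^b\mathbbm{1}\{k^+=m+1,k^-=1\}\big]$ for all $(2m-2)(2m-3)$ cross terms.
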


Many elements in the expressions for the moments contain the factor $\sum_{i=k^-+1}^{k^+-1}A_i^f$ and thus depend on both $k^+,k^-$. We therefore first consider the moments of $A_i^f,A_i^b$ conditional on the values of $k^+$ and $k^-$. Extending the reasoning in Lemma \ref{lemma:Interact-A_i/k^+} we find:

\begin{lemma}
\begingroup\allowdisplaybreaks
\label{lemma:ConditionalMomentsAiMidpoint}
We have for $l < i < j$:
\begin{align}
\label{eq:MP_expA}
    &\mathbb{E}\Big[A_i^f\mathbbm{1}\{k^+ = j, k^- = l\}\Big] = \mathbb{P}(k^+ = j, k^- = l) - \int_{z=0}^1  \hat{P}_{N_i^f}(z;j,l)\dx z ;\\
\label{eq:MP_secA}
    &\mathbb{E}\Big[\big(A_i^f\big)^2\mathbbm{1}\{k^+ = j, k^- = l\}\Big] = \mathbb{P}(k^+ = j, k^- = l) - 2\int_{z=0}^1z  \hat{P}_{N_i^f}(z;j,l)\dx z;\\
\label{eq:MP_corA}
    &\mathbb{E}\Big[A_i^fA_i^b\mathbbm{1}\{k^+ = j, k^- = l\}\Big]=\begin{aligned}[t]
        &\mathbb{P}(k^+ = j, k^- = l) - 2\int_{z=0}^1  \hat{P}_{N_i^f}(z;j,l)\dx z \\
        &+\int_{z=0}^1 \int_{y=0}^1  \hat{P}_{N_i^f,N_i^b}(z,y;j,l)\dx y\dx z,
    \end{aligned}
\end{align}
with $\hat{P}_{N_i^f}(z;j,l)$ as defined in \eqref{eq:PGFcondDefinition} and given in Lemma \ref{lemma:ShortcutPGF2}. Furthermore, for $l < i < j$, $l\leq m \leq j$:
\begin{align}
\label{eq:MP_expNjAi}
    &\mathbb{E}\bigg[N_m^bA_i^f\mathbbm{1}\{k^+ = j, k^- = l\}\bigg] = \hat{P}'_{N_i^f}(1;j,l) - \hat{P}_{N_i^f}(1;j,l) + \hat{P}_{N_i^f}(0;j,l),\\
\label{eq:MP_expNiAi}
    &\mathbb{E}\bigg[N_i^fA_i^f\mathbbm{1}\{k^+ = j, k^- = l\}\bigg] =\hat{P}'_{N_i^f}(1;j,l) - \hat{P}_{N_i^f}(1;j,l)+\int_{z=0}^1\hat{P}_{N_i^f}(z;j,l)\dx z.
\end{align}
\endgroup
\end{lemma}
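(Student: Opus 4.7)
The unifying strategy is to exploit the conditional independence of $A_i^f$ from everything else given $N_i^f$, together with the order-statistics identity $\mathbb{E}[A_i^f \mid N_i^f] = 1 - \int_0^1 z^{N_i^f}\,\dx z$ (the same identity driving Lemma \ref{lemma:PRE_momentsA}, extended to the edge case $N_i^f=0$ via the convention $A_i^f=0$, which is consistent with $\int_0^1 z^0\,\dx z = 1$). My plan is to condition on $N_i^f$, or on $(N_i^f, N_i^b)$ for the cross-terms; push the indicator $\mathbbm{1}\{k^+=j,\,k^-=l\}$ through the resulting integral by Fubini; and read off the answer as $\hat{P}_{N_i^f}(z;j,l)$ or one of its derivatives.

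For \eqref{eq:MP_expA}, the tower property immediately yields
\[
\mathbb{E}\big[A_i^f\,\mathbbm{1}\{k^+=j, k^-=l\}\big] = \mathbb{P}(k^+=j,k^-=l) - \int_0^1 \hat{P}_{N_i^f}(z;j,l)\,\dx z,
\]
and \eqref{eq:MP_secA} follows identically from $\mathbb{E}[(A_i^f)^2 \mid N_i^f] = 1 - 2\int_0^1 z\,z^{N_i^f}\,\dx z$. For \eqref{eq:MP_corA} I would use the further conditional independence of $A_i^f$ and $A_i^b$ given $(N_i^f, N_i^b)$ to factor the conditional expectation, expand the resulting product, and apply Fubini; the front/back symmetry $\hat{P}_{N_i^b}(z;j,l) = \hat{P}_{N_i^f}(z;j,l)$ then collapses the two single integrals into $-2\int_0^1 \hat{P}_{N_i^f}(z;j,l)\,\dx z$, while the double integral produces the $\hat{P}_{N_i^f, N_i^b}$ term.

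Identity \eqref{eq:MP_expNiAi} follows by the same conditioning: one obtains $\mathbb{E}[N_i^f\mathbbm{1}\{\cdot\}] - \int_0^1 \mathbb{E}[N_i^f z^{N_i^f}\mathbbm{1}\{\cdot\}]\,\dx z = \hat{P}'_{N_i^f}(1;j,l) - \int_0^1 z\,\hat{P}'_{N_i^f}(z;j,l)\,\dx z$, and a single integration by parts on the remaining integral gives $-\hat{P}_{N_i^f}(1;j,l) + \int_0^1 \hat{P}_{N_i^f}(z;j,l)\,\dx z$, which is the claim.

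The main obstacle is \eqref{eq:MP_expNjAi}. The same reduction produces $\mathbb{E}[N_m^b\mathbbm{1}\{\cdot\}] - \int_0^1 \mathbb{E}[N_m^b z^{N_i^f}\mathbbm{1}\{\cdot\}]\,\dx z$, so the decisive step is to identify the integrand with $\hat{P}'_{N_i^f}(z;j,l)$. Direct differentiation of the marginal PGF $\hat{P}_{N_i^f}(z;j,l)$ naturally pulls down a factor of $N_i^f$ rather than $N_m^b$, so a bare derivative argument does not suffice. My approach is to exploit that the joint PGF $\hat{P}_{N_i^f,N_i^b}(z,y;j,l)$ from Lemma \ref{lemma:ShortcutPGF2} is symmetric in $(z,y)$: the event $\{k^+=j, k^-=l\}$ constrains only $N_i = N_i^f + N_i^b$, so the front and back halves of aisle $i$ are exchangeable on the event. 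This symmetry forces
\[
\partial_y\,\hat{P}_{N_i^f,N_i^b}(z,y;j,l)\big|_{y=1} = \partial_z\,\hat{P}_{N_i^f,N_i^b}(z,y;j,l)\big|_{y=1} = \hat{P}'_{N_i^f}(z;j,l),
\]
and the left-hand side is precisely $\mathbb{E}[N_i^b z^{N_i^f}\mathbbm{1}\{\cdot\}]$, delivering the identity in the case $m=i$. Integrating in $z$ then contributes $-(\hat{P}_{N_i^f}(1;j,l) - \hat{P}_{N_i^f}(0;j,l))$, and combining with $\mathbb{E}[N_m^b\mathbbm{1}\{\cdot\}] = \hat{P}'_{N_i^f}(1;j,l)$ (again by the front/back exchangeability) gives the stated formula.
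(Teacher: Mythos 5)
Your overall route coincides with the paper's: you obtain \eqref{eq:MP_expA}--\eqref{eq:MP_corA} exactly as the paper does, by the argument of Lemma \ref{lemma:Interact-A_i/k^+} applied to the refined event $\{k^+=j,k^-=l\}$, and you treat the interaction terms through $\mathbb{E}[A_i^f\mid N_i^f]=N_i^f/(N_i^f+1)$ as in Proposition \ref{prop:ReturnCorP}. Your proof of \eqref{eq:MP_expNiAi}, which integrates $\int_0^1 z\,\hat{P}'_{N_i^f}(z;j,l)\,\dx z$ by parts, is a harmless variant of the paper's partial-fraction split $N\big(1-\tfrac{1}{N+1}\big)=N-1+\tfrac{1}{N+1}$; both give the stated formula.

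The genuine gap is in \eqref{eq:MP_expNjAi}. The asserted implication ``$\hat{P}_{N_i^f,N_i^b}(z,y;j,l)$ is symmetric in $(z,y)$, hence $\partial_y\hat{P}(z,y)\vert_{y=1}=\partial_z\hat{P}(z,y)\vert_{y=1}$'' is false in general: $F(z,y)=zy+z^2y^2$ is the (sub-)PGF of a perfectly exchangeable pair, yet $\partial_yF(z,1)=z+2z^2\neq 1+2z=\partial_zF(z,1)$. What actually carries the step --- and what Lemma \ref{lemma:ShortcutPGF2} displays explicitly --- is the stronger property that $\hat{P}_{N_i^f,N_i^b}(z,y;j,l)$ depends on $(z,y)$ only through $z+y$ (given $N_i$ and the event, the front/back split is symmetric binomial, so the transform is $\mathbb{E}\big[\big(\tfrac{z+y}{2}\big)^{N_i}\mathbbm{1}\{\cdot\}\big]$ up to rescaling); with $\hat{P}(z,y)=G(z+y)$ one has $\partial_y=\partial_z$ identically and your computation, including $\int_0^1 G'(z+1)\,\dx z=G(2)-G(1)=\hat{P}_{N_i^f}(1;j,l)-\hat{P}_{N_i^f}(0;j,l)$, goes through. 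You should replace the symmetry argument by this function-of-the-sum argument. Relatedly, your integrand identification is carried out only for $m=i$, whereas the lemma asserts \eqref{eq:MP_expNjAi} for all $l\le m\le j$; for $m\neq i$ the same property is needed for the joint transform $\hat{P}_{N_i^f,N_m^b}(z,y;j,l)$, which is how the paper proceeds (it evaluates $\int_0^1\partial_x\hat{P}_{N_i^f,N_m^b}(z,x;j,l)\vert_{x=1}\,\dx z=\hat{P}_{N_m^b}(1;j,l)-\hat{P}_{N_m^b}(0;j,l)$ and identifies $\hat{P}_{N_m^b}$ with $\hat{P}_{N_i^f}$). For interior $m$ this follows from the half-aisle exchangeability you invoke; the boundary cases $m\in\{l,j\}$ (which are used, e.g.\ the $N_1^f$ term in \eqref{eq:MidpointAsumP}) are not covered by that exchangeability and deserve an explicit remark in any complete write-up.
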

Using Lemma \ref{lemma:ConditionalMomentsAiMidpoint}, we can find expressions for the terms in Theorem \ref{thm:MidpointRouting} that involve $\sum_{i=k^-+1}^{k^+-1}A_i^f$. We do this by summing over the possible values of $k^+$ and $k^-$ and applying the result of Lemma \ref{lemma:ConditionalMomentsAiMidpoint} to each aisle. We then use that all terms in Lemma \ref{lemma:ConditionalMomentsAiMidpoint} solely depend on the difference between $k^+$ and $k^-$. This results in Proposition \ref{prop:MidpointLargeProp}.

\begin{proposition}
\label{prop:MidpointLargeProp}
\begingroup
\allowdisplaybreaks
Let $S^f = \sum_{i=k^-+1}^{k^+-1}A_i^{f}$ and $S^b = \sum_{i=k^-+1}^{k^+-1}A_i^{b}$, then:
\begin{align}
\label{eq:MidpointExpA}
  &\mathbb{E}\Big[S^f\Big]= \sum_{m=2}^{k-1}(m-1)(k-m) \mathbb{E}\Big[A_2^f\mathbbm{1}\{k^+ = m+1, k^- = 1\}\Big],\\
\label{eq:MidpointSecA}
  &\mathbb{E}\bigg[\Big(S^b + S^f\Big)^2\bigg] =\begin{aligned}[t]
  &\sum_{m=2}^{k-1}(2m-2)(k-m) \mathbb{E}\Big[\big(A_2^f\big)^2 \mathbbm{1}\{k^+ = m+1, k^- = 1\}\Big]\\
    & + \sum_{m=2}^{k-1}(2m-2)(2m-3)(k-m)\mathbb{E}\big[A_2^fA_2^b \mathbbm{1}\{k^+ = m+1, k^- = 1\}\big],
\end{aligned}
\end{align}
with the expectations given in \eqref{eq:MP_expA}, \eqref{eq:MP_secA} and \eqref{eq:MP_corA} respectively. The interaction terms satisfy:
\begin{alignat}{2}
\label{eq:MidpointAks}
     &\mathbb{E}\Big[k^+S^f\Big]&&=\sum_{m=2}^{j-1}\frac{1}{2}(k-m)(k+m+1)(m-1)\mathbb{E}\big[A_{2}^f\mathbbm{1}\{k^+ = m+1, k^- = 1\}\big],\\
\label{eq:MidpointAsumP}
     &\mathbb{E}\Big[MS^f\Big] &&=\,
        \sum_{m=2}^{k-1} (k-m)(m-1)(2m-1)\mathbb{E}\bigg[A_2^fN_2^b\mathbbm{1}\{k^+ = m+1, k^- = 1\}\bigg]\nonumber\\
        & &&\quad+\sum_{m=2}^{k-1}(k-m)(m-1)\mathbb{E}\bigg[A_2^fN_2^f\mathbbm{1}\{k^+ = m+1, k^- = 1\}\bigg]\\
        & &&\quad+\sum_{m=2}^{k-1}4(k-m)(m-1)\mathbb{E}\bigg[A_2^fN_1^f\mathbbm{1}\{k^+ = m+1, k^- = 1\}\bigg]\nonumber,
\end{alignat}
with $\mathbb{E}[A_2^fN_2^b\mathbbm{1}\{k^+ = m+1, k^- = 1\}],\mathbb{E}[A_2^fN_2^f\mathbbm{1}\{k^+ = m+1, k^- = 1\}]$ as in \eqref{eq:MP_expNjAi}, \eqref{eq:MP_expNiAi}.
\endgroup
\end{proposition}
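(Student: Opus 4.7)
The plan is to handle all four identities by a common strategy: partition the sample space according to the value of the pair $(k^+,k^-)$, evaluate the conditional expectation on each event using Lemma \ref{lemma:ConditionalMomentsAiMidpoint}, and then exploit the key invariance observed at the end of Section \ref{sec:kplus} that $\hat{P}_{N_i^f}(z;j,l)$, $\hat{P}_{N_i^f,N_i^b}(z,y;j,l)$, and $\hat{P}_{N_i,N_{i^*}}(z,y;j,l)$ depend on $j$ and $l$ only through the gap $m:=j-l$. Consequently, for each such $m\geq 2$ there are exactly $k-m$ pairs $(l,j)$ with $1\leq l<j\leq k$ and $j-l=m$, and every one of these contributes the same value; I may therefore evaluate only the canonical case $l=1,\,j=m+1$ and multiply by $k-m$. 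Events with $k^+=k^-$ or $k^+=k^-+1$ contribute nothing since the inner sums defining $S^f,S^b$ are empty there.

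The formula for $\mathbb{E}[S^f]$ follows immediately by linearity: the inner sum has $(j-l-1)=m-1$ terms, each equal (by the above invariance and by symmetry among middle aisles) to $\mathbb{E}\bigl[A_2^f\mathbbm{1}\{k^+=m+1,k^-=1\}\bigr]$. The formula for $\mathbb{E}[k^+S^f]$ is the same computation weighted by the constant factor $j$ on each event; summing $\sum_{l=1}^{k-m}(l+m)=(k-m)(k+m+1)/2$ for each fixed $m$ produces the stated coefficient.

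For $\mathbb{E}[(S^b+S^f)^2]$, I would expand the square as $\sum_{i,i'=k^-+1}^{k^+-1}\bigl(A_i^f+A_i^b\bigr)\bigl(A_{i'}^f+A_{i'}^b\bigr)$ and sort the four-term product of each summand according to whether the two half-aisles coincide. Two things collapse the bookkeeping dramatically: (i) front/back symmetry inside an aisle identifies $\mathbb{E}[(A_i^f)^2\mathbbm{1}]$ with $\mathbb{E}[(A_i^b)^2\mathbbm{1}]$; (ii) the same inclusion–exclusion argument that produces Lemma \ref{lemma:ShortcutPGF2} also shows that the joint PGF of $(N_i^f,N_{i'}^f)$ for two distinct middle aisles coincides with that of $(N_i^f,N_i^b)$, so in expectation all cross-aisle products $A_i^X A_{i'}^Y$ with $i\neq i'$ equal $\mathbb{E}[A_2^fA_2^b\mathbbm{1}]$. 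Counting $(m-1)$ diagonal indices and $(m-1)(m-2)$ off-diagonal pairs, and multiplying by the appropriate number of half-aisle pairings, yields the two prefactors $(2m-2)$ and $(2m-2)(2m-3)$ in \eqref{eq:MidpointSecA}.

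For $\mathbb{E}[MS^f]$, the main work is a careful decomposition $M=\sum_{i'=1}^{k}(N_{i'}^f+N_{i'}^b)$; on the event $\{k^+=j,k^-=l\}$ only $i'\in[l,j]$ contribute, and for each middle $i$ I split the inner sum into three regimes based on the role of the aisle $i'$: (a) $i'=i$, contributing $\mathbb{E}[N_i^fA_i^f\mathbbm{1}]+\mathbb{E}[N_i^bA_i^f\mathbbm{1}]$; (b) $i'$ a different middle aisle, contributing $\mathbb{E}[(N_{i'}^f+N_{i'}^b)A_i^f\mathbbm{1}]$, which by the half-aisle exchangeability in (ii) above reduces to $2\mathbb{E}[N_2^bA_2^f\mathbbm{1}]$; and (c) $i'\in\{l,j\}$, which by front/back symmetry inside the outer aisle together with the $l\leftrightarrow j$ reflection yields $2\mathbb{E}[N_1^fA_2^f\mathbbm{1}]$. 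Summing over the $m-1$ middle aisles $i$, the $k-m$ pairs $(l,j)$ with $j-l=m$, and $m=2,\dots,k-1$, together with Lemma \ref{lemma:ConditionalMomentsAiMidpoint} to identify each surviving term, gives \eqref{eq:MidpointAsumP}. The main obstacle is precisely this bookkeeping in (iv): tracking exactly how many same-aisle, different-middle-aisle, and outer-aisle interaction terms arise, and invoking the correct symmetry (within-aisle front/back, aisle-level permutation, or $l\leftrightarrow j$ reflection) to collapse each to one of the three canonical expectations appearing in the statement.
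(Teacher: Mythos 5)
Your strategy is exactly the paper's: sum over the events $\{k^+=j,\,k^-=l\}$, invoke the invariance of $\hat{P}_{N_i^f}(z;j,l)$ in everything but the gap $m=j-l$ so that only the canonical case $(l,j)=(1,m+1)$ need be evaluated and multiplied by the $k-m$ placements, and then count multiplicities. For \eqref{eq:MidpointExpA}, \eqref{eq:MidpointSecA} and \eqref{eq:MidpointAks} your counts ($m-1$ middle aisles; $2m-2$ squared half-aisle terms and $(2m-2)(2m-3)$ ordered cross terms, all cross terms having equal expectation by half-aisle exchangeability; $\sum_{j=m+1}^{k}j=\tfrac{1}{2}(k-m)(k+m+1)$) agree with the paper's and are correct.

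The gap is in \eqref{eq:MidpointAsumP}, where you describe the right three-way decomposition but never execute the count, and executing it does not reproduce the displayed coefficient. On $\{k^+=m+1,\,k^-=1\}$ one has $M=\sum_{i'=1}^{m+1}(N_{i'}^f+N_{i'}^b)$, i.e.\ $2(m+1)$ half-aisle terms set against each fixed middle $A_i^f$. Your case (a) contributes one copy of $\mathbb{E}[N_2^fA_2^f\mathbbm{1}]$ and one copy of $\mathbb{E}[N_2^bA_2^f\mathbbm{1}]$; your case (b) contributes two copies of $\mathbb{E}[N_2^bA_2^f\mathbbm{1}]$ for each of the $m-2$ remaining middle aisles; your case (c) contributes four copies of $\mathbb{E}[N_1^fA_2^f\mathbbm{1}]$. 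That gives the middle coefficient $1+2(m-2)=2m-3$, not the $2m-1$ in the statement; note also that $1+(2m-3)+4=2m+2$ matches the required total of $2(m+1)$ half-aisles, whereas $1+(2m-1)+4=2m+4$ does not (already for $k=3$, $m=2$ there are only six half-aisles, but the stated coefficients sum to eight). So either you must exhibit where two extra copies of $\mathbb{E}[N_2^bA_2^f\mathbbm{1}]$ come from, or you should flag that your (otherwise correct) bookkeeping contradicts \eqref{eq:MidpointAsumP} as printed --- the paper's own proof likewise just asserts the three cases and writes $2m-1$ without displaying the count, and appears to count aisle $i$ twice, once in case (a) and once among the ``other'' middle aisles. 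Asserting that the bookkeeping ``gives \eqref{eq:MidpointAsumP}'' without performing it is precisely where the proof is incomplete.
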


\subsection{Largest gap routing} \label{sec:Largest}
The third heuristic we discuss is the largest gap strategy, in which the picker walks up until the largest gap between two items (to pick) in an aisle. An initial analysis of the largest gap in an aisle can be found in Section \ref{sec:order}, yet similarly to the case of midpoint routing, one has to be careful since the picker always completely traverses the first and last aisle in which items have to be picked. In this section we prove the following statement:

\begin{theorem}[Largest gap routing]
\label{thm:LargestGap}
The first moment of $T$ is given by:
\begin{align}
    \mathbb{E}[T] = \mathbb{E}[M]\mathbb{E}[P] + 
    \frac{2l}{v}\mathbb{E}\Big[\sum_{i=k^-+1}^{k^+-1}\big(1-D_i\big)\Big] + \frac{2w_a}{v}\Big(\mathbb{E}\big[k^+\big]-1\Big) + \frac{2l}{v},
\end{align}
with $\mathbb{E}\Big[\sum_{i=k^-+1}^{k^+-1}\big(1-D_i\big)\Big]$ as in \eqref{eq:largestgapExpD} and $\mathbb{E}\big[k^+\big]$ as in \eqref{eq:PRE_expkplus}. The second moment of $T$ is given by:
\begin{alignat}{2}
 &\mathbb{E}[T^2] &&= \mathbb{E}[M(M-1)]\mathbb{E}[P]^2 + \mathbb{E}[M]\mathbb{E}[P^2] + \frac{4l^2}{v^2}\mathbb{E}\bigg[\Big(\sum_{i=k^-+1}^{k^+-1}\big(1-D_i\big)\Big)^2\bigg] \nonumber\\
 & &&\quad+ \frac{4w_a^2}{v^2}\bigg(\mathbb{E}\big[{k^+}^2\big]-2\mathbb{E}\big[{k^+}\big]+1\bigg) + \frac{4l^2}{v^2} + \frac{4l}{v}\mathbb{E}[P]\mathbb{E}\Big[M\sum_{i=k^-+1}^{k^+-1}\big(1-D_i\big)\Big]  \\
 & &&\quad+   \frac{4w_a}{v}\mathbb{E}[P]\bigg(\mathbb{E}[Mk^+]- \mathbb{E}[M]\bigg) + \frac{8lw_a}{v^2}\bigg(\mathbb{E}\Big[k^+\sum_{i=k^-+1}^{k^+-1}(1-D_i)\Big] \nonumber\\
 & &&\quad-\mathbb{E}\Big[\sum_{i=k^-+1}^{k^+-1}(1-D_i)\Big]\bigg)+ \frac{4l}{v}\bigg(\mathbb{E}[T]-\frac{2l}{v}\bigg), \nonumber
\end{alignat}
where $\mathbb{E}\bigg[\Big(\sum_{i=k^-+1}^{k^+-1}\big(1-D_i\big)\Big)^2\bigg]$ is given in \eqref{eq:largestgapSecD}, $\mathbb{E}\big[{k^+}^2\big]$ in \eqref{eq:PRE_expkplus} and $\mathbb{E}\Big[M\sum_{i=k^-+1}^{k^+-1}(1-D_i)\Big]$, $\mathbb{E}[Mk^+]$ and $\mathbb{E}\Big[k^+\sum_{i=k^-+1}^{k^+-1}(1-D_i)\Big]$ are given in \eqref{eq:largestgapAsumP}, \eqref{eq:PRE_Mkplus} and \eqref{eq:largestgapAks} respectively.
\end{theorem}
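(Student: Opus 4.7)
The plan is to follow the same blueprint used for midpoint routing in Theorem \ref{thm:MidpointRouting}. Starting from representation (\ref{eq:LargestGapRouting}) of $T$, the first moment follows immediately by linearity of expectation: the constant $2l/v$ accounts for the two completely traversed aisles, the term $(2w_a/v)(\mathbb{E}[k^+]-1)$ comes directly from Lemma \ref{lemma:kplus}, and only $\mathbb{E}\bigl[\sum_{i=k^-+1}^{k^+-1}(1-D_i)\bigr]$ requires further work, via conditioning on the values of $k^\pm$.

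For the second moment I would expand $T^2$ into its ten diagonal and cross contributions. The standard pieces $\mathbb{E}\bigl[(\sum P_i)^2\bigr]$, $\mathbb{E}[(k^+-1)^2]$ and $\mathbb{E}[P]\mathbb{E}[Mk^+]$ are immediate from Lemmas \ref{lemma:kplus} and \ref{lemma:intMkplus}; the cross term between the $2l/v$ constant and the $D_i$-sum collapses to $(4l/v)(\mathbb{E}[T]-2l/v)$, exactly as in the midpoint derivation. The genuinely new ingredients are the three terms $\mathbb{E}\bigl[(\sum(1-D_i))^2\bigr]$, $\mathbb{E}\bigl[M\sum(1-D_i)\bigr]$ and $\mathbb{E}\bigl[k^+\sum(1-D_i)\bigr]$. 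To handle them I would first prove a conditional analogue of Lemma \ref{corollary:LargGapMoments}, giving closed-form integral expressions for $\mathbb{E}[D_i\mathbbm{1}\{k^+=j,k^-=l\}]$, $\mathbb{E}[D_i^2\mathbbm{1}\{k^+=j,k^-=l\}]$, $\mathbb{E}[D_iD_{i^*}\mathbbm{1}\{k^+=j,k^-=l\}]$ and $\mathbb{E}[N_m D_i\mathbbm{1}\{k^+=j,k^-=l\}]$ in terms of the $\hat{P}$ functions from (\ref{eq:PGFcondDefinition}). The argument mirrors Lemma \ref{corollary:LargGapMoments}: fix $N_i$ (and possibly $N_{i^*}$), invoke the spacing results of Pyke and Holst for the largest gap of a fixed number of uniforms on $[0,1]$, and decondition by substituting the joint conditional PGFs supplied by Lemmas \ref{lemma:ShortcutPGF2} and \ref{lemma:ShortcutPGF3} in place of the unconditional PGF of $N_i$ (or $(N_i,N_{i^*})$).

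Once these conditional moments are in place, exchangeability of the $k^+-k^--1$ intermediate aisles reduces each expectation $\mathbb{E}[f(D,N)\mathbbm{1}\{k^+=j,k^-=l\}]$ to a representative value at $(k^-,k^+)=(1,m+1)$ with $m=j-l$, which I would then sum over $2\le m\le k-1$ with combinatorial weights counting the placements of $k^\pm$ and the ordered pairs of interior indices. This produces formulas entirely analogous to (\ref{eq:MidpointExpA})--(\ref{eq:MidpointAsumP}) of Proposition \ref{prop:MidpointLargeProp}, with $(1-D_i)$ replacing $A_i^f$. The main technical obstacle is the two-aisle term $\mathbb{E}[D_iD_{i^*}\mathbbm{1}\{k^+=j,k^-=l\}]$ arising from $\mathbb{E}\bigl[(\sum(1-D_i))^2\bigr]$: it couples the nested $\log(1-x)\log(1-y)$ integrals from (\ref{eq:corDex}) with the multi-case structure of $\hat{P}_{N_i,N_{i^*}}$ given by Lemma \ref{lemma:ShortcutPGF3}, and keeping the combinatorial bookkeeping straight when summing over all ordered interior pairs $(i,i^*)$ with $i\neq i^*$ is the delicate accounting step of the proof.
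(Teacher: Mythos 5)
Your proposal is correct and follows essentially the same route as the paper: the theorem itself is the direct expansion of (\ref{eq:LargestGapRouting}), the conditional analogue of Lemma \ref{corollary:LargGapMoments} you describe is precisely the paper's Lemma \ref{lemma:LargGapMoments}, and the exchangeability-plus-combinatorial-weights summation is its Proposition \ref{prop:LargestGapbigprop}, whose proof the paper itself dispatches as analogous to the midpoint case. One small wording slip: the term $\frac{4l}{v}\big(\mathbb{E}[T]-\frac{2l}{v}\big)$ is the cross term between the constant $\frac{2l}{v}$ and the whole remainder of $T$, not just the $D_i$-sum, but the expression you wrote is the correct one.
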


Similarly to the midpoint routing policy, we now also have many terms that depend on both $k^+$ and $k^-$. We now apply the reasoning of Lemma \ref{lemma:Interact-A_i/k^+} to Lemma \ref{corollary:LargGapMoments} and find:
\begin{lemma}
\label{lemma:LargGapMoments}
\begingroup
\allowdisplaybreaks
In the case of largest gap routing we have for $l < i,m < j$ and $i\neq m$:
\begin{align}
\label{eq:LG_expD}
    &\mathbb{E}\big[(1-D_i)\mathbbm{1}\{k^+ = j,k^- = l\}\big] = \hat{P}_{N_i}(1;j,l)+\int_{x=0}^1 \hat{P}_{N_i}(x;j,l)\log(1-x)\mathrm{d}x,\\
\label{eq:LG_corD}
    &\mathbb{E}[(1-D_i)(1-D_m)\mathbbm{1}\{k^+ = j,k^- = l\}] = 
        \hat{P}_{N_i}(1;j,l)+2\int_{x=0}^1 \hat{P}_{N_i}(x;j,l)\log(1-x)\mathrm{d}x\nonumber\\
        & \quad+\int_{x=0}^1\int_{y=0}^1 \log(1-y)\log(1-x)\hat{P}_{N_i,N_m}(x,y;j,l) \mathrm{d}y\mathrm{d}x,\\
\label{eq:LG_secD}
    &\mathbb{E}[(1-D_i)^2\mathbbm{1}\{k^+ = j,k^- = l\}] =
    \hat{P}_{N_i}(1;j,l)+2\int_{x=0}^1 \hat{P}_{N_i}(x;j,l)\log(1-x)\mathrm{d}x\\
    &\quad +\int_{x=0}^1x\hat{P}_{N_i}(x;j,l)\int_{y=x}^1 \frac{\log^2(1-y)}{y^2}\dx y\mathrm{d}x\nonumber,
\end{align}
with $\hat{P}_{N_i}(x;j,l)$ as defined in \eqref{eq:PGFcondDefinition} and given in Lemma \ref{lemma:ShortcutPGF3}.\\ Furthermore, for $l<i<j$, $l \leq m \leq j$, $i\neq m$:
\begin{align}
\label{eq:LG_expNjDi}
    &\mathbb{E}\Big[N_m(1-D_i)\mathbbm{1}\{k^+ = j, k^-=l\}\Big] =\hat{P}_{N_m}'(1;j,l)-\int_{x=0}^1 \frac{\hat{P}_{N_i}(1;j,l)-\hat{P}_{N_i}(x;j,l)}{1-x}\mathrm{d}x,\\
\label{eq:LG_expNiDi}
    &\mathbb{E}\Big[N_i(1-D_i)\mathbbm{1}\{k^+ = j, k^-=l\}\Big]=\begin{aligned}[t]
    &\hat{P}_{N_i}'(1;j,l)-\int_{x=0}^1 \log(1-x)\hat{P}_{N_i}(x;j,l)\dx x\\
    &-\int_{x=0}^1\frac{\hat{P}_{N_i}(1;j,l)-x\hat{P}_{N_i}(x;j,l)}{1-x}\mathrm{d}x.
    \end{aligned}
\end{align}
\endgroup
\end{lemma}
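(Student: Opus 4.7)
My plan is to mirror the unconditional derivation of Lemma \ref{corollary:LargGapMoments}, replacing the marginal PGF $P_{N_i}(x)$ throughout by the restricted quantity $\hat{P}_{N_i}(x;j,l)$ from \eqref{eq:PGFcondDefinition}. The justification is a conditional-independence argument: given $N_i$ (and $N_m$), the spacings inside aisle $i$ (and aisle $m$) depend only on the uniform positions of the items within those aisles, which are independent of $(k^+,k^-)$ and of the item counts in all other aisles. From the proof of Lemma \ref{corollary:LargGapMoments} one reads off the fixed-$n$ identities $\mathbb{E}[1-D_i\mid N_i=n]=1+\int_0^1 x^n\log(1-x)\,\mathrm{d}x$, $\mathbb{E}[(1-D_i)^2\mid N_i=n]=1+2\int_0^1 x^n\log(1-x)\,\mathrm{d}x+\int_0^1 x^{n+1}\int_x^1\log^2(1-y)/y^2\,\mathrm{d}y\,\mathrm{d}x$, and (by between-aisle independence) $\mathbb{E}[(1-D_i)(1-D_m)\mid N_i=n,N_m=n']=\mathbb{E}[1-D_i\mid N_i=n]\cdot\mathbb{E}[1-D_m\mid N_m=n']$.

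To establish \eqref{eq:LG_expD}, \eqref{eq:LG_secD} and \eqref{eq:LG_corD}, I would multiply each fixed-$n$ identity by $\mathbbm{1}\{k^+=j,k^-=l\}$, sum against the joint mass of $(N_i,N_m,k^+,k^-)$, and interchange sum and integral. The $x^n$- and $x^n y^{n'}$-weighted sums then collapse to $\hat{P}_{N_i}(x;j,l)$ and $\hat{P}_{N_i,N_m}(x,y;j,l)$ respectively, both explicit by Lemma \ref{lemma:ShortcutPGF3}. The factor $2$ in front of the single integral in \eqref{eq:LG_corD} comes from the marginal symmetry $\hat{P}_{N_i,N_m}(x,1;j,l)=\hat{P}_{N_i,N_m}(1,x;j,l)=\hat{P}_{N_i}(x;j,l)$.

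For the interaction terms \eqref{eq:LG_expNjDi} and \eqref{eq:LG_expNiDi}, I would split $\mathbb{E}[N_m(1-D_i)\mathbbm{1}\{\cdot\}]=\hat{P}'_{N_m}(1;j,l)+\int_0^1\log(1-x)\,\mathbb{E}[N_m x^{N_i}\mathbbm{1}\{\cdot\}]\,\mathrm{d}x$. The key structural observation is that, by Lemma \ref{lemma:ShortcutPGF3}, the joint $\hat{P}_{N_i,N_m}(z,y;j,l)$ depends on $(z,y)$ only through the sum $z+y$, so $\partial_y|_{y=1}$ and $\partial_z|_{y=1}$ coincide; hence $\mathbb{E}[N_m x^{N_i}\mathbbm{1}\{\cdot\}]=\hat{P}'_{N_i}(x;j,l)$ when $m\neq i$ lies strictly between $l$ and $j$. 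For the case $m=i$ in \eqref{eq:LG_expNiDi}, the elementary identity $\sum_n n x^n p_n = x\,\mathrm{d}/\mathrm{d}x\sum_n x^n p_n$ yields instead $\mathbb{E}[N_i x^{N_i}\mathbbm{1}\{\cdot\}]=x\hat{P}'_{N_i}(x;j,l)$.

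The final step is integration by parts. For \eqref{eq:LG_expNjDi}, taking $u=\log(1-x)$ and $v(x)=\hat{P}_{N_i}(x;j,l)-\hat{P}_{N_i}(1;j,l)$ makes the boundary terms vanish, since $v(x)=O(1-x)$ as $x\uparrow 1$ absorbs the logarithmic singularity, and what remains is exactly $-\int_0^1(\hat{P}_{N_i}(1;j,l)-\hat{P}_{N_i}(x;j,l))/(1-x)\,\mathrm{d}x$. For \eqref{eq:LG_expNiDi} the analogous IBP with extra weight $x$, combined with $\int_0^1\log(1-x)\,\mathrm{d}x=-1$, produces both the $-\int\log(1-x)\hat{P}_{N_i}(x;j,l)\,\mathrm{d}x$ and the $-\int(\hat{P}_{N_i}(1;j,l)-x\hat{P}_{N_i}(x;j,l))/(1-x)\,\mathrm{d}x$ pieces after cancellation of a residual $-\hat{P}_{N_i}(1;j,l)$. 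The main obstacle I expect is careful bookkeeping: verifying the vanishing of the boundary terms at $x=1$, treating the endpoint cases $m\in\{l,j\}$ separately (where the $\partial_y$/$\partial_z$ symmetry needs to be re-derived against a suitable extension of Lemma \ref{lemma:ShortcutPGF3}), and tracking the iterated double integral in \eqref{eq:LG_secD}, which is the most delicate algebraic piece.
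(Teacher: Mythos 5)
Your proposal follows essentially the same route as the paper's proof: condition on $N_i$ (and $N_m$), substitute the restricted generating function $\hat{P}_{N_i}(\cdot;j,l)$ into the fixed-$n$ spacing formulas from Lemma \ref{corollary:LargGapMoments}, use that the joint $\hat{P}_{N_i,N_m}$ depends on its arguments only through their sum so that $\partial_y\big\vert_{y=1}$ becomes $\hat{P}'_{N_i}(x)$, and integrate by parts against $\hat{P}_{N_i}(x)-\hat{P}_{N_i}(1)$ to kill the logarithmic boundary term. If anything, you are more explicit than the paper on the one delicate point, namely the endpoint cases $m\in\{l,j\}$ in \eqref{eq:LG_expNjDi}, where the sum-dependence from Lemma \ref{lemma:ShortcutPGF3} does not apply verbatim and the paper's proof is silent.
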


Similar to Proposition \ref{prop:MidpointLargeProp}, we now sum over all possible values of $k^+$ and $k^-$ and find: 
\begin{proposition}
\begingroup \allowdisplaybreaks
\label{prop:LargestGapbigprop}
In the case of largest gap routing we have that:
\begin{align}
\label{eq:largestgapExpD}
  &\mathbb{E}\Big[\sum_{i=k^-+1}^{k^+-1}\big(1-D_i\big)\Big]= \sum_{m=2}^{k-1}(m-1)(k-m) \mathbb{E}\Big[(1-D_2)\mathbbm{1}\{k^+ = m+1, k^- = 1\}\Big],\\
\label{eq:largestgapSecD}
  &\mathbb{E}\bigg[\Big(\sum_{i=k^-+1}^{k^+-1}\big(1-D_i\big)\Big)^2\bigg] = \sum_{m=2}^{k-1}(m-1)(k-m) \mathbb{E}\Big[(1-D_2)^2 \mathbbm{1}\{k^+ = m+1, k^- = 1\}\Big] \\
    & \quad+ \sum_{m=3}^{k-1}(m-1)(m-2)(k-m)\mathbb{E}\big[(1-D_2)(1-D_3) \mathbbm{1}\{k^+ = m+1, k^- = 1\}\big]\nonumber,
\end{align}
where the expectations can be respectively found in \eqref{eq:LG_expD}, \eqref{eq:LG_secD} and \eqref{eq:LG_corD}. Furthermore
\begin{alignat}{2}
\label{eq:largestgapAks}
     &\mathbb{E}\Big[k^+\sum_{i=k^-+1}^{k^+-1}(1-D_i)\Big]&&=\sum_{m=2}^{k-1}\frac{1}{2}(k-m)(k+m+1)(m-1)\mathbb{E}\big[(1-D_{2})\mathbbm{1}\{k^+ = m+1, k^- = 1\}\big].\\
\label{eq:largestgapAsumP}
     &\mathbb{E}\Big[M\sum_{i=k^-+1}^{k^+-1}(1-D_i)\Big] &&=\sum_{m=3}^{k-1} (k-m)(m-1)(m-2)\mathbb{E}\bigg[N_3(1-D_2)\mathbbm{1}\{k^+ = m+1, k^- = 1\}\bigg]\nonumber\\
        & && +\sum_{m=2}^{k-1}(k-m)(m-1)\mathbb{E}\bigg[N_2(1-D_2)\mathbbm{1}\{k^+ = m+1, k^- = 1\}\bigg]\\
        & &&+\sum_{m=2}^{k-1}2(k-m)(m-1)\mathbb{E}\bigg[N_1(1-D_2)\mathbbm{1}\{k^+ = m+1, k^- = 1\}\bigg]\nonumber,
\end{alignat}
with the expectations on the right hand side as in \eqref{eq:LG_expNjDi} and \eqref{eq:LG_expNiDi}.
\endgroup
\end{proposition}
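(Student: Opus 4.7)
The four identities in Proposition \ref{prop:LargestGapbigprop} will be proven by the same template used for midpoint routing in Proposition \ref{prop:MidpointLargeProp}. The plan rests on two observations. First, for $l<i<j$ the conditional expectations appearing in Lemma \ref{lemma:LargGapMoments} depend on $(l,j)$ only through the difference $m^*:=j-l$ and on the relative positions of the aisles involved (interior versus boundary), because the underlying PGFs in Lemmas \ref{lemma:ShortcutPGF} and \ref{lemma:ShortcutPGF3} share this property. Second, the number of ordered pairs $(l,j)$ with $1\leq l<j\leq k$ and $j-l=m^*$ is $k-m^*$. Together these yield the $(k-m)$ factor throughout and let me collapse any double sum over $(l,j)$ into a single sum over $m=m^*$.

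I will start by expanding the target sum using the partition of unity $1=\sum_{1\leq l<j\leq k}\mathbbm{1}\{k^+=j,k^-=l\}$. For \eqref{eq:largestgapExpD} this gives
$$\mathbb{E}\Big[\!\!\sum_{i=k^-+1}^{k^+-1}\!\!(1-D_i)\Big]=\sum_{1\leq l<j\leq k}\sum_{i=l+1}^{j-1}\mathbb{E}\big[(1-D_i)\mathbbm{1}\{k^+=j,k^-=l\}\big],$$
and substituting \eqref{eq:LG_expD} and collapsing equal-$m^*$ blocks (each inner sum has $m^*-1$ terms) yields the result. For \eqref{eq:largestgapSecD} I expand the square into diagonal and off-diagonal contributions: a fixed $(l,j)$ with $j-l=m^*$ contributes $m^*-1$ diagonal terms handled by \eqref{eq:LG_secD} and $(m^*-1)(m^*-2)$ cross terms handled by \eqref{eq:LG_corD}, again collapsing by $m^*$. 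For \eqref{eq:largestgapAks} the extra factor $k^+=j$ alters only the weight attached to each $(l,j)$; a direct computation gives $\sum_{l=1}^{k-m^*}(l+m^*)=\tfrac12(k-m^*)(k+m^*+1)$, matching the stated coefficient.

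The only delicate identity is \eqref{eq:largestgapAsumP}. Conditional on $\{k^-=l,\,k^+=j\}$ we have $M=\sum_{a=l}^{j}N_a$, so
$$\mathbb{E}\Big[M\!\!\sum_{i=k^-+1}^{k^+-1}\!\!(1-D_i)\Big]=\sum_{1\leq l<j\leq k}\sum_{i=l+1}^{j-1}\sum_{a=l}^{j}\mathbb{E}\big[N_a(1-D_i)\mathbbm{1}\{k^+=j,k^-=l\}\big].$$
I will split the $a$-sum into three regimes: (I) $a=i$, handled by \eqref{eq:LG_expNiDi}; (II) $l<a<j$ with $a\neq i$, handled by \eqref{eq:LG_expNjDi} in conjunction with Lemma \ref{lemma:ShortcutPGF3}; (III) $a\in\{l,j\}$, handled by \eqref{eq:LG_expNjDi} combined with a boundary analogue of Lemma \ref{lemma:ShortcutPGF3} (derivable by the same inclusion--exclusion used in Lemma \ref{lemma:ShortcutPGF2}). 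For each $(l,j)$ with $j-l=m^*$ the multiplicities of these three regimes over the $i$-sum are $m^*-1$, $(m^*-1)(m^*-2)$ and $2(m^*-1)$ respectively, so after collapsing in $m^*$ and using symmetry between $a=l$ and $a=j$, one obtains the three canonical expectations $\mathbb{E}[N_2(1-D_2)\mathbbm{1}\{\cdot\}]$, $\mathbb{E}[N_3(1-D_2)\mathbbm{1}\{\cdot\}]$, $\mathbb{E}[N_1(1-D_2)\mathbbm{1}\{\cdot\}]$ with precisely the coefficients stated.

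The main obstacle is the careful bookkeeping in \eqref{eq:largestgapAsumP}, and in particular checking that the boundary case (III) truly produces the same $N_1$-labelled expectation by invariance under shifting aisle indices. Once the interior/boundary classification is verified and the counts of $(i,a)$ pairs tallied, the remaining steps are linear algebra over $m^*$.
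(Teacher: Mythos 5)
Your proposal is correct and follows essentially the same route as the paper, which proves this proposition by declaring it analogous to Proposition \ref{prop:MidpointLargeProp}: partition over the events $\{k^+=j,k^-=l\}$, exploit that the conditional expectations depend only on $j-l$ so the $(k-m)$ pairs with a fixed difference collapse into one term, and count the diagonal, off-diagonal and boundary multiplicities ($m-1$, $(m-1)(m-2)$ and $2(m-1)$ respectively), with the $\sum_j j$ computation giving the $\tfrac12(k-m)(k+m+1)$ weight. Your bookkeeping for \eqref{eq:largestgapAsumP}, including the three-regime split of the $a$-sum, matches the paper's treatment of the corresponding midpoint identity.
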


\subsection{S-shaped routing} \label{sec:Sshaped}
Lastly, we discuss the S-shaped routing policy, where each aisle with items up until the last one is traversed completely, while for the last one the picker walks up until the furthest item if the picker enters this aisle from the front cross-aisle and also traverses it completely if this aisle is entered from the back cross-aisle. In this derivation we thus distinguish between the two different cases, this results in the following:
\begin{theorem}[S-shaped routing]
\label{thm:Sshaped}
\begingroup\allowdisplaybreaks
The first moment of $T$ is given by:
\begin{align}
    \mathbb{E}[T] = \mathbb{E}[M]\mathbb{E}[P] + \frac{l}{v}\mathbb{E}\Big[\sum_{i=1}^k I_i\Big] + \frac{2l}{v}\mathbb{E}[I_\text{odd}A_{k^+}] - \frac{l}{v}\mathbb{E}[I_\text{odd}] + \frac{2w_a}{v}\Big(\mathbb{E}\big[k^+\big]-1\Big),
\end{align}
with $\mathbb{E}\Big[\sum_{i=1}^k I_i\Big]$ as in \eqref{eq:PRE_istar}, $\mathbb{E}[I_\text{odd}A_{k^+}]$ and $\mathbb{E}[I_\text{odd}]$ as in \eqref{eq:SS_IoddA} and \eqref{eq:PRE_iodd} respectively and $\mathbb{E}[k^+]$ as in \eqref{eq:PRE_expkplus}. The second moment of $T$ is given by:
\begin{align}
    \mathbb{E}\big[T^2\big] &= \mathbb{E}[M(M-1)]\mathbb{E}[P]^2 + \mathbb{E}[M]\mathbb{E}[P^2] + \frac{l^2}{v^2}\mathbb{E}\bigg[\Big(\sum_{i=1}^k I_i\Big)^2\bigg] + \frac{4l^2}{v^2}\mathbb{E}\big[I_\text{odd}A_{k^+}^2\big]  \nonumber\\
    &\quad  + \frac{l^2}{v^2}\mathbb{E}[I_\text{odd}] + \frac{4w_a^2}{v^2}\Big(\mathbb{E}\big[{k^+}^2\big]-2\mathbb{E}\big[{k^+}\big]+1\Big)+  \frac{2l}{v}\mathbb{E}[P]\mathbb{E}\Big[M\sum_{i=1}^k I_i\Big] \nonumber\\
    &\quad+\frac{4l}{v}\mathbb{E}[P]\mathbb{E}[MI_\text{odd}A_{k^+}]-\frac{2l}{v}\mathbb{E}[P]\mathbb{E}[MI_\text{odd}]+\frac{4w_a}{v}\mathbb{E}[P]\Big(\mathbb{E}[Mk^+]-\mathbb{E}[M]\Big)\\
    &\quad+\frac{4l^2}{v^2}\mathbb{E}\Big[I_\text{odd}A_{k^+}\sum_{i=1}^k I_i\Big]-\frac{2l^2}{v^2}\mathbb{E}\Big[I_\text{odd}\sum_{i=1}^k I_i\Big]+\frac{4lw_a}{v^2}\bigg(\mathbb{E}\Big[k^+\sum_{i=1}^k I_i\Big]-\mathbb{E}\Big[\sum_{i=1}^k I_i\Big]\bigg)\nonumber\\
    &\quad-\frac{4l^2}{v^2}\mathbb{E}[I_\text{odd}A_{k^+}]+\frac{8lw_a}{v^2}\Big(\mathbb{E}[I_\text{odd}A_{k^+}k^+]-\mathbb{E}[I_\text{odd}A_{k^+}]\Big)-\frac{4lw_a}{v^2}\Big(\mathbb{E}[I_\text{odd}k^+]-\mathbb{E}[I_\text{odd}]\Big)\nonumber,
\end{align}
where $\mathbb{E}\bigg[\Big(\sum_{i=1}^k I_i\Big)^2\bigg]$ is as given in \eqref{eq:PRE_istar2}, $\mathbb{E}\big[I_\text{odd}A_{k^+}^2\big]$ as in \eqref{eq:SS_IoddA2}, $\mathbb{E}\big[{k^+}^2\big]$ as in \eqref{eq:PRE_expkplus}, $\mathbb{E}\Big[M\sum_{i=1}^k, I_i\Big]$, $\mathbb{E}[MI_\text{odd}A_{k^+}]$ and $\mathbb{E}[MI_\text{odd}]$ as in \eqref{eq:SS_MI}, \eqref{eq:SS_IoddAM} and \eqref{eq:SS_IoddM} respectively, $\mathbb{E}[Mk^+]$ is as in \eqref{eq:PRE_Mkplus}, $\mathbb{E}\Big[I_\text{odd}A_{k^+}\sum_{i=1}^k I_i\Big]$ as in \eqref{eq:SS_IoddAI}, $\mathbb{E}\Big[I_\text{odd}\sum_{i=1}^k I_i\Big]$ as in \eqref{eq:SS_IIodd} and $\mathbb{E}\Big[k^+\sum_{i=1}^k I_i\Big]$, $\mathbb{E}[I_\text{odd}A_{k^+}k^+]$, $\mathbb{E}[I_\text{odd}k^+]$ are as given in \eqref{eq:SS_Iks}, \eqref{eq:SS_IoddAk}, \eqref{eq:SS_Ioddk} respectively.
\endgroup
\end{theorem}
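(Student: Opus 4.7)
The plan is to work directly from the S-shaped picking-time formula \eqref{eq:S-shapedRouting}, decomposing $T$ into four pieces: the picking-time sum $S_P = \sum_{i=1}^M P_i$, the full-traversal contribution $S_I = \frac{l}{v}\sum_{i=1}^k I_i$, the last-aisle correction $S_A = \frac{l}{v}I_{\text{odd}}(2A_{k^+}-1)$, and the cross-aisle travel $S_K = \frac{2w_a}{v}(k^+-1)$. For $\mathbb{E}[T]$, linearity together with Wald's identity (using that the $P_i$ are i.i.d.\ and independent of the geometry) immediately produces the desired expression once $\mathbb{E}[\sum I_i]$, $\mathbb{E}[k^+]$, $\mathbb{E}[I_{\text{odd}}]$ and the new quantity $\mathbb{E}[I_{\text{odd}}A_{k^+}]$ are in hand; the first three come from Lemmas \ref{lemma:kplus}, \ref{lemma:sshaped1} and \ref{lemma:Iodd}, while $\mathbb{E}[I_{\text{odd}}A_{k^+}]$ has to be derived separately.

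For $\mathbb{E}[T^2]$ I would expand $(S_P+S_I+S_A+S_K)^2$ into four diagonal terms and six cross terms. Three diagonals are immediate: $\mathbb{E}[S_P^2] = \mathbb{E}[M(M-1)]\mathbb{E}[P]^2 + \mathbb{E}[M]\mathbb{E}[P^2]$ by conditioning on $M$; $\mathbb{E}[S_I^2]$ uses \eqref{eq:PRE_istar2}; $\mathbb{E}[S_K^2]$ uses \eqref{eq:PRE_expkplus}. For $\mathbb{E}[S_A^2]$ the key observation is that $I_{\text{odd}}^2 = I_{\text{odd}}$, so
\[
(2A_{k^+}-1)^2 I_{\text{odd}} \;=\; 4 I_{\text{odd}}A_{k^+}^2 - 4 I_{\text{odd}}A_{k^+} + I_{\text{odd}},
\]
which explains the appearance of both $\tfrac{4l^2}{v^2}\mathbb{E}[I_{\text{odd}}A_{k^+}^2]$ and $-\tfrac{4l^2}{v^2}\mathbb{E}[I_{\text{odd}}A_{k^+}]$ in the stated formula. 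For the three cross products involving $S_P$, independence of $\{P_i\}$ from the geometry yields $\mathbb{E}[S_P X] = \mathbb{E}[P]\,\mathbb{E}[MX]$ after conditioning on $M$; this handles $S_PS_I$, $S_PS_A$, $S_PS_K$ and produces the factors $\mathbb{E}[M\sum I_i]$, $\mathbb{E}[MI_{\text{odd}}A_{k^+}]$, $\mathbb{E}[MI_{\text{odd}}]$ and $\mathbb{E}[Mk^+]$. The three purely-geometric cross products $S_IS_A$, $S_IS_K$, $S_AS_K$ reduce, after expanding $2A_{k^+}-1$ and $k^+-1$, to the mixed moments listed in the theorem.

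The substantive work is to evaluate the mixed moments that involve $A_{k^+}$, namely $\mathbb{E}[I_{\text{odd}}A_{k^+}]$, $\mathbb{E}[I_{\text{odd}}A_{k^+}^2]$, $\mathbb{E}[MI_{\text{odd}}A_{k^+}]$, $\mathbb{E}[I_{\text{odd}}A_{k^+}\sum I_i]$ and $\mathbb{E}[I_{\text{odd}}A_{k^+}k^+]$. The approach is to condition on the set $\mathcal{I}$ of non-empty aisles: on the event $\{\mathcal{I}=\{i_1,\ldots,i_j\}\}$ one has $k^+ = \max\{i_1,\ldots,i_j\}$, $I_{\text{odd}} = \mathbbm{1}\{j\text{ odd}\}$, $\sum I_i = j$, and $A_{k^+}$ is the maximum of $N_{k^+}\ge 1$ i.i.d.\ uniforms on $[0,1]$. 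Combining Corollary \ref{cor:PRE_corSshaped} with Lemma \ref{lemma:SshapedPGF} and the survival-function identity already exploited in Lemma \ref{lemma:Interact-A_i/k^+}, namely
\[
\mathbb{E}\bigl[A_{k^+}\mathbbm{1}\{\mathcal{I}=\{1,\ldots,j\}\}\bigr] \;=\; \mathbb{P}(\mathcal{I}=\{1,\ldots,j\}) - \int_{0}^{1}\mathbb{E}\bigl[z^{N_j}\mathbbm{1}\{\mathcal{I}=\{1,\ldots,j\}\}\bigr]\,\mathrm{d}z,
\]
together with its analogue for $A_{k^+}^2$ involving the weight $z$ in the integrand, reduces each mixed moment to a single integral of $P_M$ or its derivative. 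Summing over the position of $k^+$ and the cardinality of $\mathcal{I}$ via the same binomial identities used in Lemma \ref{lemma:Iodd} then yields the closed forms stored in the appendix.

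The main obstacle is bookkeeping rather than any single hard step: roughly a dozen mixed moments must each be unfolded over the position of $k^+$ and the parity and cardinality of $\mathcal{I}\setminus\{k^+\}$. The subtlest cases are those that couple $A_{k^+}$ with $M$ or with $\sum I_i$, because one must separate the contribution of the items in aisle $k^+$ (which interact nontrivially with $A_{k^+}$) from the items in the other non-empty aisles (which only feed into $\sum I_i$ or $k^+$), and then combine this separation with the parity constraint imposed by $I_{\text{odd}}$. Once these mixed moments have been derived and collected as \eqref{eq:SS_IoddA}--\eqref{eq:SS_Ioddk}, the two-moment theorem follows by direct substitution.
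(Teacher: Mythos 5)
Your proposal is correct and follows essentially the same route as the paper: expand $T$ from \eqref{eq:S-shapedRouting} into the four components, use $I_{\text{odd}}^2=I_{\text{odd}}$ and independence of the $P_i$ from the geometry to reduce everything to the listed mixed moments, and evaluate those by conditioning on the set $\I$ (via Corollary \ref{cor:PRE_corSshaped}, Lemma \ref{lemma:SshapedPGF} and the identity \eqref{eq:SS_meanACond}), exactly as in the proofs of Propositions \ref{prop:Sshaped1} and \ref{prop:Sshaped2}. Your bookkeeping of the diagonal and cross terms reproduces every term of the stated second moment, including the sign pattern coming from $(2A_{k^+}-1)$ and $(k^+-1)$.
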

Remark that the expression consists of many terms that have yet to be derived. Many of these terms can, however, be found by applying similar methods: summing over all possible numbers of aisles in which items have to be picked or considering the case with fixed $M$. 
\begin{proposition}
\label{prop:Sshaped1}
The number of aisles with items to pick satisfy:
\begingroup
\allowdisplaybreaks
\begin{align}
\label{eq:SS_MI}
    &\mathbb{E}\Big[M\sum_{i=1}^{k}I_i\Big] = k\mathbb{E}[M]  -(k-1)P_M'\Big(\frac{k-1}{k}\Big);\\
\label{eq:SS_Iks}
    &\mathbb{E}\Big[k^+\sum_{i=1}^{k}I_i\Big] = k^2 - k(k+1)P_M\Big(\frac{k-1}{k}\Big) + \sum_{j=1}^{k} P_M\Big(\frac{j-1}{k}\Big);\\
\label{eq:SS_IIodd}
    &\mathbb{E}\Big[I_{\text{odd}}\sum_{i=1}^k I_i\Big]
    \begin{aligned}[t]
        &= \bigg[k-k^2P_M\Big(\frac{k-1}{k}\Big)\bigg]\mathbbm{1}\{k \text{ odd}\} + \bigg[k(k-1)P_M\Big(\frac{k-1}{k}\Big)\bigg]\mathbbm{1}\{k \text{ even}\}\\
        &\quad+\sum_{l=0}^{k-2} \binom{k}{l}P_M\Big(\frac{l}{k}\Big)(-1)^{l+1}(k+l)2^{k-l-2}.
    \end{aligned}
\end{align}
Furthermore the interaction terms with $I_{\text{odd}}$ satisfy:
\begin{align}
\label{eq:SS_Ioddk}
    & \mathbb{E}\Big[I_{\text{odd}}k^+\Big]
    \begin{aligned}[t]
        &=\sum_{l=1}^k \sum_{m=0}^{l-2}\binom{l}{m}P_M\Big(\frac{m}{k}\Big)(-1)^{m+1}(m+l)2^{l-m-2}\\
        &\quad + \sum_{l=1, l \, \text{even}}^k l(l-1)P_M\Big(\frac{l-1}{k}\Big) + \sum_{l=1, l \, \text{odd}}^k \Big[lP_M\Big(\frac{l}{k}\Big)-l^2P_M\Big(\frac{l-1}{k}\Big)\Big].
    \end{aligned}\\
\label{eq:SS_IoddM}
     & \mathbb{E}\Big[MI_\text{odd}\Big]
    \begin{aligned}[t]
        &=\sum_{l=0}^{k-2}\binom{k-1}{l}(-1)^l2^{k-2-l
    }P_M'\Big(\frac{1+l}{k}\Big) + \mathbb{E}[M]\mathbbm{1}\{k \text{ odd}\}.
    \end{aligned}
\end{align}
\endgroup
\end{proposition}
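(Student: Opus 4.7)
All five identities in the proposition follow the same template: condition on $M = m$, express the quantity via classical inclusion-exclusion on the occupancy problem, and decondition by replacing $(r/k)^m$ by $P_M(r/k)$ and $m(r/k)^m$ by $(r/k)P_M'(r/k)$. This is exactly the pattern already used in Lemmas \ref{lemma:sshaped1}--\ref{lemma:Iodd}.

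For \eqref{eq:SS_MI} and \eqref{eq:SS_Iks}, linearity and symmetry do most of the work. The quantity $\mathbb{E}[M\sum_{i}I_i]$ reduces to $k\mathbb{E}[MI_1] = k\mathbb{E}[M] - k\mathbb{E}[M\mathbbm{1}\{N_1=0\}]$, and $\mathbb{P}(N_1=0 \mid M=m) = ((k-1)/k)^m$ lets me sum against $mP(M=m)$ to obtain $\tfrac{k-1}{k}P_M'((k-1)/k)$. For $\mathbb{E}[k^+\sum_i I_i]$, items in aisles $i > k^+$ vanish, so the sum runs only up to $k^+$; the $i = k^+$ term contributes $\mathbb{E}[k^+]$, and for $i < k^+$ Lemma \ref{lemma:ShortcutPGF} with $z = 0$ gives $\mathbb{P}(N_i \geq 1, k^+ = j) = P_M(j/k) - 2P_M((j-1)/k) + P_M((j-2)/k)$, which is independent of $i$. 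The resulting weighted second difference $\sum_j j(j-1)[P_M(j/k) - 2P_M((j-1)/k) + P_M((j-2)/k)]$ collapses to the target after an Abel-type reindexing.

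For \eqref{eq:SS_IIodd}, \eqref{eq:SS_Ioddk} and \eqref{eq:SS_IoddM} the $I_\text{odd}$ factor is handled through $\mathbbm{1}\{\text{odd}\} = \tfrac{1}{2}(1 - (-1)^{\sum_i I_i})$, as already implicit in Lemma \ref{lemma:Iodd}. Identity \eqref{eq:SS_IIodd} reduces by linearity and symmetry to $k\mathbb{E}[I_\text{odd}] - k\mathbb{E}[I_\text{odd}\mathbbm{1}\{N_1 = 0\}]$; conditional on $N_1 = 0$ the items are uniformly distributed over the remaining $k-1$ aisles, so the derivation of Lemma \ref{lemma:Iodd} applies with $\binom{k}{l}$ replaced by $\binom{k-1}{l}$ in the outer binomial, and the standard identity $\sum_{l=r}^a\binom{a}{l}\binom{l}{r} = \binom{a}{r}2^{a-r}$ completes the simplification. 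For \eqref{eq:SS_IoddM} I condition on $M = m$, use the inclusion-exclusion representation of $\mathbb{E}[I_\text{odd}\mid M=m]$, and sum $m(r/k)^m$ into $(r/k)P_M'(r/k)$; the identity $\binom{k}{r}r/k = \binom{k-1}{r-1}$ shifts the index, and the $r = k$ term separates off as $\mathbb{E}[M]\mathbbm{1}\{k \text{ odd}\}$.

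The most delicate identity is \eqref{eq:SS_Ioddk}. I expand $\mathbb{E}[I_\text{odd} k^+] = \sum_j j\,\mathbb{E}[I_\text{odd}\mathbbm{1}\{k^+ = j\}]$ and compute each inner term by inclusion-exclusion on $\{1,\ldots,j\}$ with aisle $j$ forced non-empty, giving $\mathbb{P}(k^+ = j,\sum_i I_i = l \mid M = m) = \binom{j-1}{l-1}\sum_{r=0}^l (-1)^{l-r}\binom{l}{r}(r/k)^m$. After deconditioning over $M$ and restricting to odd $l$ via $\tfrac{1}{2}(1-(-1)^l)$, the crucial combinatorial identity needed is $\sum_{l=r}^j \binom{j-1}{l-1}\binom{l}{r} = 2^{j-r-1}\bigl[\binom{j-1}{r-1} + \binom{j}{r}\bigr]$, which I will verify by double counting pairs $(B,S)$ with $B \subseteq S$, $|B| = r$, and $j \in S \subseteq \{1,\ldots,j\}$. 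Writing $\binom{j-1}{r-1} + \binom{j}{r} = \binom{j}{r}(j+r)/j$ cancels the factor of $j$ coming from $\sum_j j\,\mathbb{E}[\cdots]$; the $r = j$ diagonal then yields $l P_M(l/k)$ on odd $l$ and vanishes on even $l$, the $r = j-1$ subdiagonal yields $-l^2 P_M((l-1)/k)$ on odd $l$ and $l(l-1) P_M((l-1)/k)$ on even $l$, and the $r \leq j-2$ terms aggregate into the first triple sum of \eqref{eq:SS_Ioddk}. The main obstacle throughout is precisely this combinatorial bookkeeping --- verifying the non-trivial identity for $\sum_{l=r}^j \binom{j-1}{l-1}\binom{l}{r}$ and correctly interleaving the parity of $j$ with the parity selector on $l$ --- rather than anything conceptually novel.
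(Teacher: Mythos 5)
Your plan is correct and, at its core, follows the same template as the paper: condition on $M=m$, use the inclusion--exclusion occupancy formulas, decondition by turning $(r/k)^m$ into $P_M(r/k)$ and $m(r/k)^m$ into $(r/k)P_M'(r/k)$, and finish with binomial parity sums. For \eqref{eq:SS_MI} and \eqref{eq:SS_Iks} your route is the paper's up to the order of summation (the paper computes $\mathbb{E}[I_ik^+]$ in closed form per aisle and then sums over $i$; you aggregate over $i$ first and telescope the second difference --- I checked that your weighted sum $\sum_j j(j-1)\Delta^2P_M(j/k)$ does collapse to the stated formula). For \eqref{eq:SS_Ioddk} your combinatorial identity $\sum_{l=r}^{j}\binom{j-1}{l-1}\binom{l}{r}=2^{j-r-1}\bigl[\binom{j-1}{r-1}+\binom{j}{r}\bigr]$ is exactly the paper's identity \eqref{eq:oddsum2} in disguise (divide by $\binom{j}{r}$ and use $\binom{j-1}{l-1}\binom{l}{r}=\tfrac{l}{j}\binom{j}{r}\binom{j-r}{l-r}$), and your boundary bookkeeping for $r=j$ and $r=j-1$ reproduces the two parity-split sums correctly. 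The two places where you genuinely diverge are \eqref{eq:SS_IIodd} and \eqref{eq:SS_IoddM}. For \eqref{eq:SS_IIodd} you write $\sum_iI_i=\sum_i(1-\mathbbm{1}\{N_i=0\})$ and reduce to $k\mathbb{E}[I_\text{odd}]-k\mathbb{E}[I_\text{odd}\mathbbm{1}\{N_1=0\}]$, handling the second term as a $(k-1)$-aisle occupancy problem; this avoids the weighted odd-sum identity \eqref{eq:oddsum2} entirely for this item and only needs the unweighted parity sum \eqref{eq:oddsum} plus $k\binom{k-1}{l}=(k-l)\binom{k}{l}$ --- I verified the boundary terms for $k$ odd and even come out right. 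For \eqref{eq:SS_IoddM} you decondition $M\cdot\mathbb{P}(\text{odd}\mid M)$ directly from the conditional version of Lemma \ref{lemma:Iodd}, whereas the paper goes through $\mathbb{E}[N_iI_\text{odd}]$ and Lemma \ref{lemma:SshapedPGF}; your route is shorter and lands on the same expression after the index shift $\binom{k}{l}l/k=\binom{k-1}{l-1}$. Both of your variants are valid and arguably a little cleaner; what they give up is the uniformity of the paper's presentation, which reuses the set-based machinery of Corollary \ref{cor:PRE_corSshaped} and Lemma \ref{lemma:SshapedPGF} across all the $I_\text{odd}$ interaction terms.
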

As for the interaction terms with $A_{k^+}$ we use the following result:
\begin{align}
\label{eq:SS_meanACond}
    \mathbb{E}[A_{k^+}\mathbbm{1}\{\I= \{1,2,...,j\}] = \mathbb{P}(\I= \{1,2,...,j\})-\int_{x=0}^1 \mathbb{E}[x^{N_j}\mathbbm{1}\{\I= \{1,2,...,j\}]\dx x,
\end{align}
which follows from the same derivation as Lemma \ref{lemma:Interact-A_i/k^+}. We can now substitute \eqref{eq:SshapedPGF} in \eqref{eq:SS_meanACond}, eventually resulting in:
\begin{proposition}
\label{prop:Sshaped2}
\begingroup
\allowdisplaybreaks
The first two moments of the term $I_\text{odd}A_{k^+}$ are given by:
\begin{align}
\label{eq:SS_IoddA}
    &\mathbb{E}[I_\text{odd}A_{k^+}]  = \sum_{j=1, j\, \text{odd}}^k \binom{k}{j}\sum_{l=0}^{j-1} \binom{j-1}{l} (-1)^{j-l}\bigg\{\int_{z=0}^1P_M\Big(\frac{z+l}{k}\Big)\mathrm{d}z - P_M\Big(\frac{l+1}{k}\Big)\bigg\},\\
\label{eq:SS_IoddA2}
    &\mathbb{E}\big[I_\text{odd}A_{k^+}^2\big]= \sum_{j=1, j\, \text{odd}}^k \binom{k}{j}\sum_{l=0}^{j-1} \binom{j-1}{l} (-1)^{j-l}\bigg\{2\int_{z=0}^1zP_M\Big(\frac{z+l}{k}\Big)\mathrm{d}z - P_M\Big(\frac{l+1}{k}\Big)\bigg\}.
\end{align}
The interaction terms of $I_\text{odd}A_{k^+}$ are as given below:
\begin{align}
\label{eq:SS_IoddAI}
&\mathbb{E}\Big[I_\text{odd}A_{k^+}\sum_{i=1}^k I_i\Big]    =\sum_{j=1, j\, \text{odd}}^k j\binom{k}{j}\sum_{l=0}^{j-1} \binom{j-1}{l} (-1)^{j-l}\bigg\{\int_{z=0}^1P_M\Big(\frac{z+l}{k}\Big)\mathrm{d}z - P_M\Big(\frac{l+1}{k}\Big)\bigg\},\\
\label{eq:SS_IoddAk}
    &\mathbb{E}\Big[I_\text{odd}A_{k^+}k^+\Big] = \sum_{m=1, \, m \text{ odd}}^k m\bigg\{\int_{z=0}^1P_M\Big(\frac{z+m-1}{k}\Big)\mathrm{d}z - P_M\Big(\frac{m}{k}\Big)\bigg\}\\
    &\quad+    \sum_{m=1}^k\sum_{l=0}^{m-2}\binom{m}{l} \bigg\{(m-l) 2^{m-l-2}(-1)^{l+1}\bigg[\int_{z=0}^1P_M\Big(\frac{z+l}{k}\Big)\mathrm{d}z - P_M\Big(\frac{l+1}{k}\Big)\bigg]\bigg\},\nonumber\\
\label{eq:SS_IoddAM}
&\mathbb{E}\Big[MI_\text{odd}A_{k^+}\Big] =\sum_{j=1, j\, \text{odd}}^k \binom{k}{j}\sum_{l=0}^{j-1} \binom{j-1}{l} (-1)^{j-l}\int_{z=0}^1\frac{z+l}{k}P_M'\Big(\frac{z+l}{k}\Big)\mathrm{d}z \\
&\quad - \sum_{j=1, j\, \text{odd}}^k \binom{k}{j}\sum_{l=0}^{j-1} \binom{j-1}{l} (-1)^{j-l}\frac{l+1}{k}P_M'\Big(\frac{l+1}{k}\Big)\nonumber.
\end{align}
\endgroup
\end{proposition}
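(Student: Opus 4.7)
The plan is to evaluate all five expectations by decomposing $I_\text{odd} = \sum_{j\text{ odd}}\mathbbm{1}\{|\I|=j\}$ and exploiting the permutation symmetry among aisles induced by random storage. Because the joint law of $(N_1,\dots,N_k,A_1,\dots,A_k)$ is invariant under relabelling of aisles, I would first establish two reduction identities: for any function $\phi$ of $(A_j,M)$,
\begin{equation*}
    \mathbb{E}\big[\phi(A_{k^+},M)\,\mathbbm{1}\{|\I|=j\}\big] = \binom{k}{j}\,\mathbb{E}\big[\phi(A_j,M)\,\mathbbm{1}\{\I = \{1,\dots,j\}\}\big],
\end{equation*}
and, when $k^+$ itself appears explicitly, a splitting on $k^+=m$ together with interchangeability within $\{1,\dots,m-1\}$ and the swap $m\leftrightarrow j$:
\begin{equation*}
    \mathbb{E}\big[\phi(A_m,M)\,\mathbbm{1}\{k^+=m,|\I|=j\}\big] = \binom{m-1}{j-1}\,\mathbb{E}\big[\phi(A_j,M)\,\mathbbm{1}\{\I = \{1,\dots,j\}\}\big].
\end{equation*}
After this step, every expectation in the proposition becomes an explicit sum over odd $j$ of an expectation with $\I$ fixed to $\{1,\dots,j\}$.

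For \eqref{eq:SS_IoddA}--\eqref{eq:SS_IoddAI}, only the quantities $\mathbb{E}[A_j^r\mathbbm{1}\{\I=\{1,\dots,j\}\}]$ for $r\in\{1,2\}$ are required. Using the integral representations $A_j = \int_0^1 \mathbbm{1}\{A_j>z\}\,\dx z$ and $A_j^2 = \int_0^1 2z\,\mathbbm{1}\{A_j>z\}\,\dx z$, and observing that $\mathbb{P}(A_j\leq z\mid N_j) = z^{N_j}$ on $\{N_j\geq 1\}$, I would arrive at the identity in \eqref{eq:SS_meanACond} and its $2z$-weighted analogue. Substituting Lemma \ref{lemma:SshapedPGF} and Corollary \ref{cor:PRE_corSshaped} then yields \eqref{eq:SS_IoddA} and \eqref{eq:SS_IoddA2} directly. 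For \eqref{eq:SS_IoddAI}, the identity $\sum_i I_i = |\I|$ simply inserts an extra factor $j$ inside the outer sum over odd $j$.

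For \eqref{eq:SS_IoddAk} I would start from the second reduction above together with the closed form already obtained for $\mathbb{E}[A_j\,\mathbbm{1}\{\I=\{1,\dots,j\}\}]$, producing a triple sum in $m$, $j$ odd, and $l$. Swapping the order of summation to put $l$ outermost, applying the Vandermonde identity $\binom{m-1}{j-1}\binom{j-1}{l} = \binom{m-1}{l}\binom{m-l-1}{j-l-1}$ and the partial alternating binomial identities $\sum_{i\text{ even}}\binom{n}{i}(-1)^i = 2^{n-1}$ and $\sum_{i\text{ odd}}\binom{n}{i}(-1)^i = -2^{n-1}$ for $n\geq 1$ (with separate bookkeeping for the boundary case $m=l+1$) collapses the inner sum over $j$ to a signed power of two. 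After the elementary rewriting $m\binom{m-1}{l} = (m-l)\binom{m}{l}$, the result splits into the two summands appearing in \eqref{eq:SS_IoddAk}.

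The most intricate identity is \eqref{eq:SS_IoddAM}, for which I would employ a PGF-derivative trick. Set $g(w) := \mathbb{E}\big[w^M z^{N_j}\,\mathbbm{1}\{\I=\{1,\dots,j\}\}\big]$ and evaluate it by inclusion-exclusion on the events $\{N_i=0\}$ for $i\leq j$ applied to the joint PGF $\mathbb{E}\big[\prod_i x_i^{N_i}\big] = P_M\big(\tfrac{1}{k}\sum_i x_i\big)$; this reproduces the formula of Lemma \ref{lemma:SshapedPGF} but with $(z+l)/k$ and $l/k$ replaced by $w(z+l)/k$ and $wl/k$ respectively. Differentiating at $w=1$ yields $\mathbb{E}[Mz^{N_j}\,\mathbbm{1}\{\I=\{1,\dots,j\}\}]$, after which the relation $\mathbb{E}[MA_j\,\mathbbm{1}\{\cdots\}] = \mathbb{E}[M\,\mathbbm{1}\{\cdots\}] - \int_0^1 \mathbb{E}[M z^{N_j}\,\mathbbm{1}\{\cdots\}]\,\dx z$ combined with the first reduction step produces \eqref{eq:SS_IoddAM}; the $\tfrac{l}{k}P_M'(l/k)$ contributions cancel cleanly between the two terms, leaving the form displayed in the proposition. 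The main obstacle throughout is the binomial bookkeeping of \eqref{eq:SS_IoddAk}; all other identities reduce, after the symmetry arguments, to direct applications of Lemma \ref{lemma:SshapedPGF}, Corollary \ref{cor:PRE_corSshaped}, and the integral representations.
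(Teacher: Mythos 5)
Your proposal is correct and follows essentially the same route as the paper: decompose $I_\text{odd}$ over odd $|\I|=j$, reduce by aisle symmetry to $\I=\{1,\dots,j\}$ with the factor $\binom{k}{j}$ (and $m\binom{m-1}{j-1}$ when $k^+=m$ is also fixed), apply \eqref{eq:SS_meanACond} with Lemma \ref{lemma:SshapedPGF}, and collapse the inner alternating binomial sum via the $2^{n-1}$ identities for \eqref{eq:SS_IoddAk}. Your $w$-derivative of the joint PGF for \eqref{eq:SS_IoddAM} is just a repackaging of the paper's condition-on-$M$-then-decondition step (both reduce to $\mathbb{E}[Mx^M]=xP_M'(x)$), and your observation that the $\tfrac{l}{k}P_M'(l/k)$ terms cancel is right.
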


\section{Numerical results} \label{sec:Application}
The results derived in Section \ref{sec:Results} can be used to compare choices in the warehousing model. In this section we discuss the  different choices for the example in which we take: $l = 20m, w_a = 2.5m, v = 3km/u$ and $k =5$, unless stated otherwise. \\
\textit{Comparison of routing heuristics}. In Figure \ref{fig:RoutingComp_det} a comparison of the routing heuristics, based on several performance measures, can be found. Here we use $T_W$ to denote the within-aisle travel time, that is:
\begin{align}
    T_W := T-\sum_{i=1}^M P_i - \frac{2w_a}{v}(k^+ - 1).
\end{align}
Note that a comparison of $\mathbb{E}[T_W]$ is equivalent to comparing $\mathbb{E}[T]$ since the difference between these two is the same for all heuristics.\\
We see that the return routing policy is the worst with respect to the expected order picking time, which comes from the fact that $\mathbb{E}[A_i\vert N_i = n] \geq 0.5$, when $n > 0$. On the other hand, for the largest gap we have: $\mathbb{E}[1-D_i\vert N_i = 1] = 0.25$. Also note that the largest gap policy outperforms the midpoint policy for all $M$, since the gap crossing the midpoint is by definition smaller than the largest gap. Furthermore, for large $M$ the S-shaped routing heuristic outperforms the other heuristics. We can explain this by the fact that walking back to the cross-aisle is sub-optimal if $M$ is large. \\
From the standard deviation we see that the S-shaped heuristic again is outperforming other policies if $M$ is large. An explanation for this is that the within-aisle travel distance has less variance, since the picker either traverses an aisle or not and there is no variance of the distance within the aisles.

\begin{figure}[t]
    \centering
    \makebox[\textwidth][c]{\includegraphics[width = \textwidth]{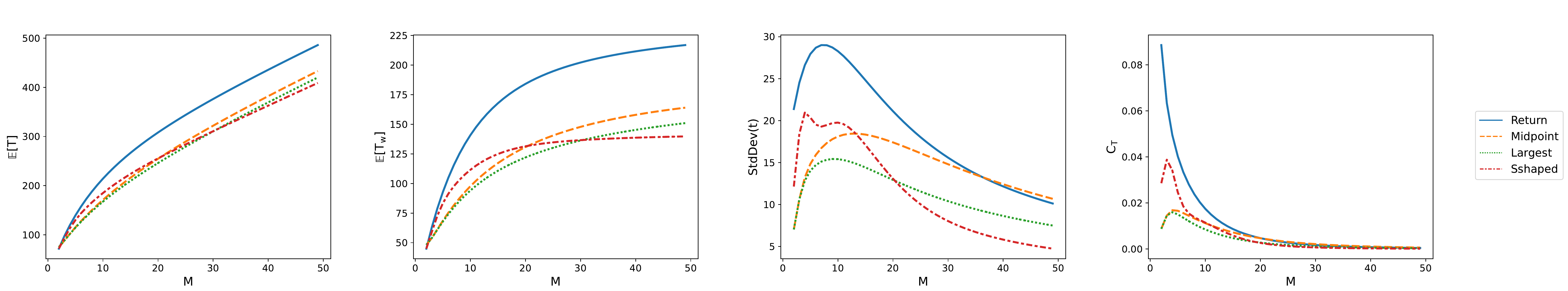}}
    \caption{Comparison of different routing heuristics for fixed $M$.}
    \label{fig:RoutingComp_det}
\end{figure}

\textit{Comparison of order size distributions}. In Figures \ref{fig:DistributionComp_ET} and \ref{fig:DistributionComp_Std} we illustrate the effect of the order size distribution, where $T_{Tr}$ denotes the total travel time:
\begin{align}
    T_{Tr} := T-\sum_{i=1}^M P_i.
\end{align}
In these figures we consider the following distributions with the same mean: deterministic, shifted Poisson, geometric and negative binomial with 7 successes. We see that the geometric distribution outperforms the other heuristics based on the expected value, but at the cost of a high variance. We also saw this previously in the examples in Section \ref{sec:Prelim}. One can explain this by the higher variance of the geometric distribution. For large $M$, we namely have that the variance of the total picking time is almost completely driven by the variance in the sum of picking times (not much variation in the travel time). The lower expectation comes from the fact that relatively (w.r.t. the mean) large order sizes are less punishing than that relatively low order sizes are rewarding. For example: $\mathbb{E}[A_i\vert N_i = n]$ for $n=5$ and $n = 9$ are close in value ($0.83$ and $0.9$) while for $n=1$ and $n=5$ we have a much larger difference ($0.5$ and $0.83$). The same reasoning can be used for the other differences, e.g. negative binomial has a rather large variance compared to the shifted Poisson distribution.

\begin{figure}[t]
    \centering
    \makebox[\textwidth][c]{\includegraphics[width = \textwidth]{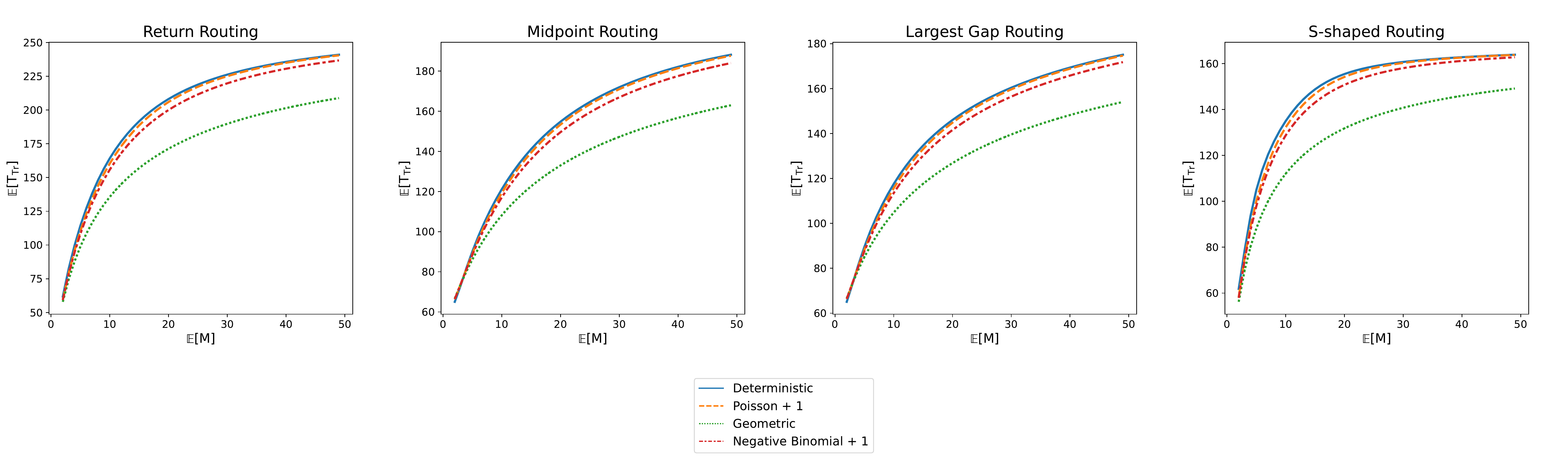}}
    \caption{Expected travel time for different order-size distributions.}
    \label{fig:DistributionComp_ET}
     \makebox[\textwidth][c]{\includegraphics[width = \textwidth]{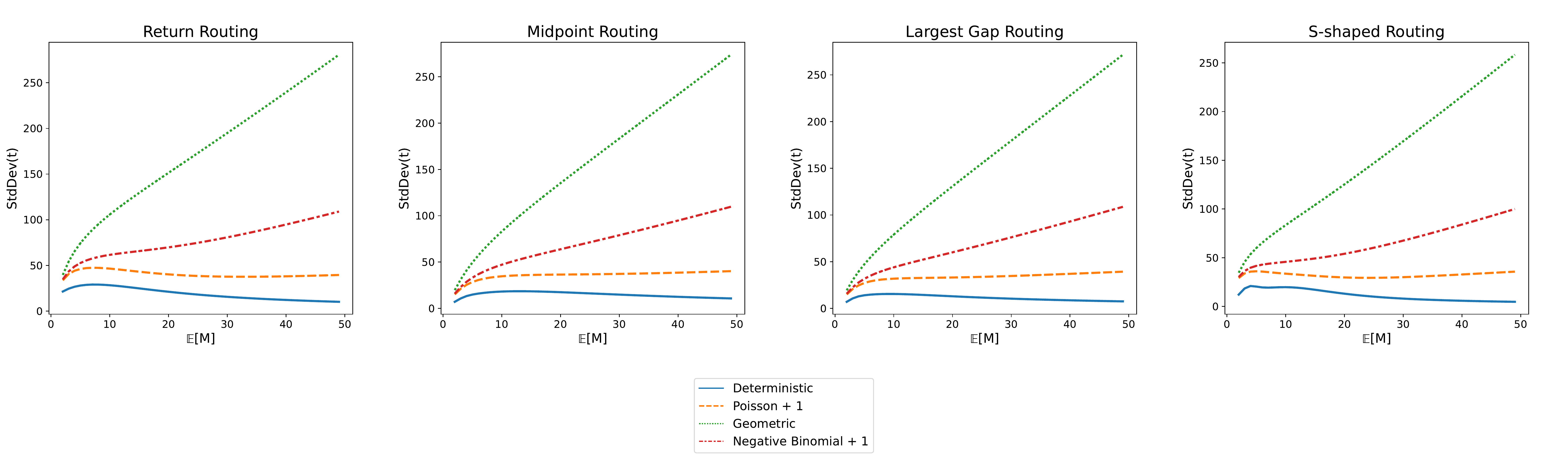}}
    \caption{Standard deviation of the total picking time for different order-size distributions.}
    \label{fig:DistributionComp_Std}
\end{figure}

\textit{Comparison of warehouse layout}. Another interesting application of these results is the \textit{layout} design of the warehouse: what shape should the warehouse have? For this we now consider the same example, but where we vary $k$ and $l$ such that $k\cdot l$ is fixed (the total aisle length is fixed). In Figure \ref{fig:layoutexamp} we have illustrated the effect on the expected travel time in the warehouse. We see that the different routing heuristics result in different optimal layouts. For instance, the return routing policy prefers wider warehouses, while the midpoint and largest gap policies prefer narrow warehouses. The S-shaped heuristic displays similar behaviour, yet also reveals the preference of an even number of aisles. This makes it more likely that the picker will enter the last aisle from the back-cross aisle, especially in the case of large order sizes. This highlights that entering the last aisle from the front cross-aisle is sub-optimal.

\begin{figure}[h]
    \centering
    \includegraphics[width = \textwidth]{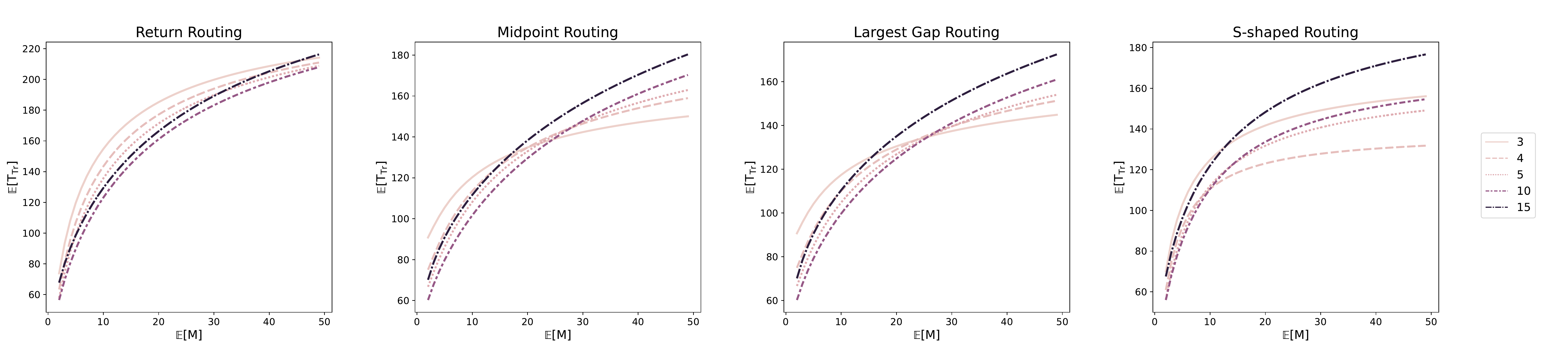}
    \caption{Comparison of the expected travel time for warehouse layouts.}
    \label{fig:layoutexamp}
\end{figure}

\section{Order lead time analysis} \label{sec:Queueing}
In this section we discuss the end-to-end delay of an order in the warehouse, defined as the order-lead time. Throughout this section, we assume that the order size follows a geometric distribution. For this we consider the warehouse with $c$ pickers as a queuing system, where orders arrive according to a Poisson process and a First Come First Serve (FCFS) policy is used for the picking order. Besides we assume that the pickers do not hinder each other while picking orders. We thus model the warehouse as an M/G/c queue, where $c$ denotes the number of pickers in the warehouse. Similar representations of order picking systems can be found in \citet{Petersen2004} and \citet{Chew1999}.\\
The order-lead time is equivalent to the sojourn time of a customer in the M/G/c queue. Using the queuing model, we can therefore approximate the average order-lead time using the well-known two moment approximation of the average delay (sojourn time) in an M/G/c queue, see \citet[page 269]{Tijms}:
\begin{align}
    \mathbb{E}[R] \approx \frac{Q}{c(1-\rho)}\cdot \frac{1}{2}\big(1+C_T^2\big)\mathbb{E}[T] + \mathbb{E}[T].
\end{align}
where $Q$ is the probability of waiting in the corresponding $M/M/c$ queue (that is with mean service time $\mathbb{E}[T]$) and $\rho = \lambda\mathbb{E}[T]/c$. \\
From this approximation we see that the average order-lead time is affected by both the mean order picking time and the coefficient of variation of the total picking time. This means, for instance, that the geometric distribution causes a high average order-lead time, while giving rise to relatively short picking times, see Figures \ref{fig:DistributionComp_ET} and \ref{fig:DistributionComp_Std}.\\
We thus see that the optimal choice in the warehouse can change based on the performance heuristic. To illustrate this further, consider the example in line with that in Section \ref{sec:Results} with $5$ pickers, $\mathbb{E}[M] = 32$ and $\lambda = 51$ orders per hour ($\rho \approx 0.85$) and the case with a lower server-utilisation: $\mathbb{E}[M] = 31$ and $\lambda = 48$ orders per hour ($\rho \approx 0.8$). The results of these examples are shown in Tables \ref{tab:MGCres1} and \ref{tab:MGCres11}. Both examples show that the largest gap policy slightly outperforms the S-shaped heuristic with respect to $\mathbb{E}[T]$, but at the cost of an increased average end-to-end delay of the customers.

\begin{minipage}{0.49\textwidth}
\begin{table}[H]
    \centering
\begin{tabular}{c|c|c}
    Routing Policy & $\mathbb{E}[T]$ (sec) & $\mathbb{E}[R]$ (sec)\\\hline
    Return &  352.53 & NA ($\rho \approx 1$)\\
    Midpoint & 309.38 & 556.85 \\
    Largest gap & 301.54 & 493.07 \\
    S-shaped & 302.00 & 490.19 \\
\end{tabular}
    \caption{Results of M/G/$5$ queue\\ with $\mathbb{E}[M] = 32,\lambda = 51$ orders per hour.}
    \label{tab:MGCres1}
\end{table}
\end{minipage}
\hspace{0.01\textwidth}
\begin{minipage}{0.49\textwidth}
\begin{table}[H]
    \centering
\begin{tabular}{c|c|c}
    Routing Policy & $\mathbb{E}[T]$ & $\mathbb{E}[R]$\\\hline
    Return &  346.20 &  835.49\\
    Midpoint & 303.31 & 428.81 \\
    Largest gap & 295.57 & 398.63 \\
    S-shaped & 296.39 & 398.25 \\
\end{tabular}
    \caption{Results of M/G/$5$ queue\\ with $\mathbb{E}[M] = 31,\lambda = 48$ orders per hour.}
    \label{tab:MGCres11}
\end{table}
\end{minipage}

\vspace{3mm}
Similarly, we can use these results to find an optimal layout of the warehouse. Consider the case in which a warehouse with 10 pickers receives many small orders: $\mathbb{E}[M] = 18$ at a rate of $\lambda = 145$ orders per hour. In this case we get the results as given in Table \ref{tab:MGCresfull}, where we see that the optimal choice affects both the layout and routing policy used: largest gap with $8$ aisles vs S-shaped with $2$ aisles. These results also highlight some differences between the routing heuristics and preferences of the layout of the warehouse.

\begin{landscape}
\begin{table}
    \centering
\begin{tabular}{c|cc|cc|cc|cc}
Policy & \multicolumn{2}{c|}{Return} &  \multicolumn{2}{c|}{Midpoint} &\multicolumn{2}{c|}{Largest gap} & \multicolumn{2}{c}{S-shaped}\\\hline
& & & & & & & &\\
$k$ & $\mathbb{E}[T]$ & $\mathbb{E}[R]$  & $\mathbb{E}[T]$ & $\mathbb{E}[R]$  & $\mathbb{E}[T]$ & $\mathbb{E}[R]$  & $\mathbb{E}[T]$ & $\mathbb{E}[R]$\\
\hline 2 & 283.27 & NA ($\rho > 1$))& 216.19 &\cellcolor{lightgray}\textbf{ 273.21}& 216.19 & 273.21& \cellcolor{lightgray}\textbf{211.18} & \cellcolor{lightgray}\textbf{256.22}\\
 3 & 271.0 & NA ($\rho > 1$))& 222.75 & 311.51& 218.57 & 287.82& 229.5 & 369.69\\
 4 & 262.38 & NA ($\rho > 1$))& 221.64 & 308.01& 216.17 & 279.14& 211.55 & 259.37\\
5 & 256.2 & NA ($\rho > 1$))& 219.18 & 295.99& 213.46 & 269.29& 219.14 & 293.27\\
 6 & 251.8 & NA ($\rho > 1$))& 216.99 & 286.07& 211.42 & 262.41& 213.33 & 267.12\\
 7 & 248.75 & NA ($\rho > 1$))& 215.47 & 279.69& 210.21 & 258.57& 217.78 & 287.43\\
 8 & 246.77 & 2940.13& 214.67 & 276.52& \cellcolor{lightgray}\textbf{209.76} & \cellcolor{lightgray}\textbf{257.27}& 216.38 & 280.56\\
 9 & 245.65 & 1759.73& \cellcolor{lightgray}\textbf{214.55} & 276.09& 210.0 & 258.14& 219.8 & 298.36\\
 10 & \cellcolor{lightgray}\textbf{245.24} & \cellcolor{lightgray}\textbf{1542.92}& 215.02 & 278.06& 210.82 & 260.94& 220.49 & 301.88\\
 11 & 245.41 & 1624.65& 216.01 & 282.31& 212.12 & 265.58& 223.63 & 322.54\\
 12 & 246.08 & 2065.22& 217.45 & 288.87& 213.85 & 272.11& 225.47 & 336.22\\
 13 & 247.17 & 3938.91& 219.27 & 297.99& 215.94 & 280.76& 228.63 & 366.39\\
 14 & 248.64 & NA ($\rho > 1$))& 221.42 & 310.16& 218.34 & 291.92& 231.17 & 397.31\\
 15 & 250.42 & NA ($\rho > 1$))& 223.86 & 326.27& 221.0 & 306.28& 234.46 & 454.32\\
 16 & 252.48 & NA ($\rho > 1$))& 226.56 & 347.84& 223.89 & 325.0& 237.46 & 534.68\\
 17 & 254.78 & NA ($\rho > 1$))& 229.47 & 377.57& 226.99 & 350.09& 240.91 & 707.07\\
 18 & 257.31 & NA ($\rho > 1$))& 232.59 & 420.71& 230.26 & 385.25& 244.25 & 1150.03\\
 19 & 260.03 & NA ($\rho > 1$))& 235.87 & 488.6& 233.69 & 438.0& 247.87 & 9817.46\\
 20 & 262.93 & NA ($\rho > 1$))& 239.31 & 611.23& 237.27 & 526.36& 251.46 & NA ($\rho > 1$))\\
 21 & 265.98 & NA ($\rho > 1$))& 242.88 & 901.38& 240.96 & 706.3& 255.23 & NA ($\rho > 1$))\\
 22 & 269.17 & NA ($\rho > 1$))& 246.57 & 2472.66& 244.77 & 1283.17& 259.02 & NA ($\rho > 1$))\\
 23 & 272.49 & NA ($\rho > 1$))& 250.38 & NA ($\rho > 1$))& 248.68 & NA ($\rho > 1$))& 262.93 & NA ($\rho > 1$))\\
 24 & 275.93 & NA ($\rho > 1$))& 254.28 & NA ($\rho > 1$))& 252.68 & NA ($\rho > 1$))& 266.87 & NA ($\rho > 1$))\\
\end{tabular}
    \caption{Results of the layout-problem for the M/G/10 queue with $\lambda = 145$ orders per hour and $\mathbb{E}[M] = 18$.}
    \label{tab:MGCresfull}
\end{table}
\end{landscape}
\section{Conclusion} \label{sec:Conclusion}
In this paper we have presented and proven exact formulas for the first two moments of the total picking time for return, midpoint, largest gap and S-shaped routing. These formulas are based on the random storage assumption and highlight dependencies in the warehousing model. These exact formulas give rise to a straightforward comparison of the heuristics and provide an exact optimization framework with respect to the layout of the warehouse.\\
Furthermore, we have presented an analysis of the average order-lead time, where we modeled the warehouse as an M/G/c queue. Here, we saw that the comparison of the routing heuristics, and the optimization of the warehouse layout, changed depending on the performance measure used. One quite particular result is that for the S-shaped routing heuristic an even number of aisles is optimal. This implies that the probability of having an odd number of aisles with items to pick depends on the number of aisles in the warehouse.

The analysis is based upon the assumption of random storage, which is quite restrictive. However, the framework presented in this paper thoroughly discusses all dependencies, conditional distributions of the number of items to pick in each aisle and preliminary results on the core elements of each routing heuristic. Consequently, one can extend this analysis to more general routing/storage policies and even more complex warehouse layouts (multi-block warehouses for instance). \\
The numerical results underline the importance of the second moment for an analysis of the order-lead time. The second moment can also be used in more complex settings, such as the zone picking setting in \citet{DeKoster1994, Gaast2018} (under the assumption of independent zones) or the order batching as in \citet{Chew1999, Le-Duc2007}.
\textbf{Acknowledgement} The research of Tim Engels and Onno Boxma is partly funded by the NWO Gravitation project NETWORKS, grant number 024.002.003..

\setlength{\bibsep}{0pt plus 0.3ex}
\bibliography{ManualOrder}

\newpage
\begin{appendix}
\begingroup
\allowdisplaybreaks
\section{Proofs of statements: Section \ref{sec:Prelim}}
\label{app:ProofsPrelim}
\subsection{Section \ref{sec:kplus}: Order statistics of discrete uniform random variables} \label{app:kplus}
\begin{proof}[Proof of Lemma \ref{lemma:kplus}]
Given $M = m$ we have
\begin{align*}
    \mathbb{P}(k^+ \leq j \, | \, M=m) = \Big(\frac{j}{k}\Big)^m \mbox{ and } 
    \mathbb{P}(k^+ = j \, | \, M=m) = \Big(\frac{j}{k}\Big)^m-\Big(\frac{j-1}{k}\Big)^m,
\end{align*}
and hence we directly have:
\begin{align}
\label{eq:PRE_kplusprob}
    \mathbb{P}(k^+ \leq j) = \sum_{m=0}^M \Big(\frac{j}{k}\Big)^m\mathbb{P}(M = m) =P_M\Big(\frac{j}{k}\Big).
\end{align}
The moments now follow by applying:
\begin{align*}
    &\mathbb{E}[k^+] = \sum_{j=0}^{k-1}\mathbb{P}(k^+ > j) = k - \sum_{j=0}^{k-1} P_M\Big(\frac{j}{k}\Big),\\
    &\mathbb{E}\big[{k^+}^2\big] = \sum_{j=0}^{k-1} (2j+1)\mathbb{P}(k^+ > j) = k^2 -\sum_{j=0}^{k-1}(2j+1)P_M\Big(\frac{j}{k}\Big). \qedhere
\end{align*}
\end{proof}

\begin{proof}[Proof of Lemma \ref{lemma:intMkplus}]
We have
\begin{align}
    \mathbb{E}[Mk^+] &= \sum_{m=0}^\infty m \mathbb{E}[k^+\vert M = m]\mathbb{P}(M= m) = \sum_{m=0}^\infty m\bigg[k-\sum_{j=0}^{k-1}\Big(\frac{j}{k}\Big)^m\bigg]\mathbb{P}(M= m) \nonumber \\
    &=k\mathbb{E}[M] - \sum_{j=0}^{k-1}\sum_{m=0}^\infty m\Big(\frac{j}{k}\Big)^m\mathbb{P}(M= m).
\end{align}
Now the lemma follows from the fact that
$\sum_{m=0}^\infty mx^m\mathbb{P}(M= m) = x P_M'(x)$.
\end{proof}

\begin{proof}[Proof of Lemma \ref{lemma:ShortcutPGF}.]
Recall \eqref{eq:jointmassfunctioNiKs}, then
the proof follows from summing over the possible values of $k^+$, we illustrate this for $j > i$:
\begin{align*}
     \mathbb{E}\Big[z^{N_i}\mathbbm{1}\{k^+ = j\}\Big] &=  \mathbb{E}\bigg[\mathbb{E}\Big[z^{N_i}\mathbbm{1}\{k^+ = j\}\Big\vert M\Big] \bigg] \\
     &= \mathbb{E}\bigg[\sum_{n=0}^M z^n\binom{M}{n}\Big(\frac{1}{k}\Big)^n\cdot \bigg\{\Big(\frac{j-1}{k}\Big)^{M-n}-\Big(\frac{j-2}{k}\Big)^{M-n}\bigg\} \bigg]\\
     &=\mathbb{E}\bigg[\Big(\frac{z}{k}+\frac{j-1}{k}\Big)^M-\Big(\frac{z}{k}+\frac{j-2}{k}\Big)^M\bigg],
\end{align*}
where one can now use the definition of the PGF to conclude.
\end{proof}

\begin{proof}[Proof of Lemma \ref{lemma:ShortcutPGF2}.]
By similar reasoning as in \eqref{eq:jointmassfunctioNiKs}:
\begin{align*}
    \mathbb{P}(N_i^f = n, N_i^b = h,k^+ = j, k^- = l\vert M = m) &= \underbrace
    {\frac{m!}{n!h!(m-n-h)!}\Big(\frac{1}{2k}\Big)^{n+h}}_{\mathrm{I}}\\
    &\hspace{-20mm}\cdot \underbrace{\bigg[\Big(\frac{j-l}{k}\Big)^{m-n-h}-2\Big(\frac{j-l-1}{k}\Big)^{m-n-h}+\Big(\frac{j-l-2}{k}\Big)^{m-n-h}\bigg]}_{\mathrm{II}}.
\end{align*}
This follows from the fact that $n,h$ out of $m$ items should be in the front and back half of aisle $i$, leading to I. The remaining $m-n-h$ items should now come from the aisles $l,...,j$, furthermore at least one of these $m-n-h$ items should come from $l$ and at least one should come from $j$, this leads to term II.\\
The proof can now be finished by following the exact same reasoning as in the proof of Lemma \ref{lemma:ShortcutPGF}.
\end{proof}

\begin{proof}[Proof of Lemma \ref{lemma:ShortcutPGF3}]
Following similar steps as in Lemma \ref{lemma:ShortcutPGF} and Lemma \ref{lemma:ShortcutPGF2}, our main observation is:
\begin{align*}
    \mathbb{P}(N_i = n, N_{i^*} = h, k^+ = j, k^- = l\vert M = m) &= \frac{m!}{n!h!(m-n-h)!}\Big(\frac{1}{k}\Big)^{n+h}\\
    &\hspace{-25mm}\cdot \bigg[\Big(\frac{j-l-1}{k}\Big)^{m-n-h}-2\Big(\frac{j-l-2}{k}\Big)^{m-n-h}+\Big(\frac{j-l-3}{k}\Big)^{m-n-h}\bigg].
\end{align*}
The proof can now be finished similarly.
\end{proof}

\subsection{Section \ref{sec:order}: Order statistics of continuous uniform random variables} \label{app:order}
\begin{proof}[Proof of Lemma \ref{lemma:PRE_momentsA}]
Given that $n$ items have to be picked in aisle $i$, we know that $\mathbb{P}(A_i \leq x\vert N_i = n) = x^n$.
Hence, 
\begin{align*}
    \mathbb{P}(A_i \leq x) &= \sum_{n=0}^\infty \mathbb{P}(A_i \leq x\vert N_i = n)\mathbb{P}(N_i = n)\\
    &= \sum_{n=0}^\infty x^n \mathbb{P}(N_i = n) = P_{N_i}(x)= P_M\Big(1-\frac{1}{k}+\frac{x}{k}\Big).
\end{align*}
The moments follow by realizing that
\begin{alignat*}{2}
    &\mathbb{E}[A_i] &&= \int_{x=0}^1 \mathbb{P}(A_i \geq x)\dx x, \quad \mathbb{E}[A_i^2] = 2\int_{x=0}^1 x\mathbb{P}(A_i \geq x)\dx x.    \end{alignat*}
Furthermore, $\mathbb{P}(A_i \leq x, A_j \leq y\vert N_i = m, N_j=n) = x^m y^n$ and hence $$\mathbb{P}(A_i \leq x, A_j \leq y) = P_M\Big(\frac{k-2+x+y}{k}\Big).$$ The formula for $\mathbb{E}[A_iA_j]$ then follows from
    \begin{alignat*}{2}
    &\mathbb{E}[A_iA_j] &&= \int_{x=0}^1\int_{y=0}^1 \mathbb{P}(A_i \geq x, A_j \geq y)\dx y \dx x \\
    & &&= \int_{x=0}^1\int_{y=0}^1\bigg[ 1-2\mathbb{P}(A_i \leq x)+\mathbb{P}(A_i \leq x, A_j \leq y)\bigg]\dx y \dx x. \qedhere
\end{alignat*}
\end{proof}

\begin{proof}[Proof of Lemma \ref{lemma:Interact-A_i/k^+}]
We can rewrite the expectation as:
\begin{align}
\mathbb{E}[A_i\mathbbm{1}\{k^+=j\}] &= \int_{x=0}^1 \mathbb{P}(A_i > x, k^+ = j)\dx x \nonumber \\
&= \mathbb{P}(k^+ = j) - \int_{x=0}^1 \mathbb{P}(A_i \leq x, k^+ = j) \dx x \nonumber \\
&= \mathbb{P}(k^+ = j) - \int_{x=0}^1 \sum_{n=0}^{\infty} x^n \mathbb{P}(N_i=n, k^+=j) \dx x \nonumber \\
&=\mathbb{P}(k^+ = j)-\int_{x=0}^1 \mathbb{E}[x^{N_i}\mathbbm{1}\{k^+ = j\}]\dx x.\qedhere
\end{align}
\end{proof}

\begin{proof}[Proof of Lemma \ref{corollary:LargGapMoments}]
We use the theory in \citet{Pyke1965}[Sections 4.1-4.4] and \citet{Holst1980}[Theorem~2.2] for fixed number of items in an aisle. In this, the authors relate the moments of the largest gap with uniformly placed items to the moments of the maximum of exponential random variables. Let $X_1,X_2,...\overset{i.i.d.}{\sim}\mathrm{Exp}(1)$, then
\begin{align*}
    \mathbb{E}\left[\big(D_i\big)^r\vert N_i = n\right] = \frac{n!}{(n+r)!}\mathbb{E}\left[\Big(\max_{1\leq i\leq n+1}X_i\Big)^r\right],
\end{align*}
and furthermore we know that
\begin{align*}
    \max_{1\leq i\leq n+1}X_i \overset{d}{=} \frac{X_{1}}{1} + ... + \frac{X_{n+1}}{n+1}.
\end{align*}
It now follows that:
\begin{align*}
   \mathbb{E}\left[D_i\right] &= \mathbb{E}\left[\frac{1}{N_i+1}\sum_{i=j}^{N_i+1} \frac{1}{j}\right],\\
   \mathbb{E}\left[D_i^2\right] &= \mathbb{E}\left[\frac{2}{(N_i+1)(N_i+2)}\sum_{j=1}^{N_i+1}\sum_{l=1}^j \frac{1}{l}\right],\\
   \mathbb{E}\left[D_iD_j\right] &= \mathbb{E}\left[\frac{1}{(N_i+1)(N_j+1)}\left\{\sum_{l=1}^{N_i+1} \frac{1}{l}\right\}\left\{\sum_{m=1}^{N_j+1} \frac{1}{m}\right\}\right].
\end{align*}
The expectations can be rewritten by replacing the harmonic number (the sum of the first $n$ reciprocals) by its integral representation,
\begin{align*}
    H_n := \sum_{k=1}^n \frac{1}{k} = \int_{x=0}^1 \frac{1-x^n}{1-x}\mathrm{d}x.
\end{align*}
Applying this to the formula for $\mathbb{E}[D_i]$ results in
\begin{align}
\label{eq:PRE_di}
    \mathbb{E}[D_i] 
    &= \mathbb{E}\bigg[\frac{1}{N_i+1}\int_{x=0}^1 \frac{1-x^{N_i+1}}{1-x}\dx x\bigg] \nonumber\\
    &= \mathbb{E}\bigg[\Big\{-\log(1-x)\frac{1-x^{N_i+1}}{N_i+1}\Big\}\bigg\vert_{x=0}^{x=1} - \int_{x=0}^1 \log(1-x)x^{N_i}\dx x\bigg]\nonumber\\
    &=-\int_{x=0}^1\log(1-x) P_{N_i}(x)\dx x = - \int_{x=0}^1 P_M\Big(1-\frac{1}{k}+\frac{x}{k}\Big)\log(1-x)\mathrm{d}x,
\end{align}
where we used partial integration and the fact that
\begin{align*}
    \frac{\dx}{\dx x}\frac{1-x^{N_i+1}}{N_i+1} = -x^{N_i} \quad \text{and } \int \frac{1}{1-x}\dx x = -\log(1-x).
\end{align*}
Similarly, for the interaction term we find:
\begin{align}
    \mathbb{E}[D_iD_j] &= \mathbb{E}\bigg[\frac{1}{N_i+1}H_{N_i+1}\frac{1}{N_j+1}H_{N_j+1}\bigg] = \mathbb{E}\bigg[\int_{x=0}^1\int_{y=0}^1\log(1-x)\log(1-y)x^{N_i}y^{N_j}\mathrm{d}y\mathrm{d}x\bigg]\nonumber\\
    \label{eq:PRE_didj}
    &=\int_{x=0}^1\int_{y=0}^1 \log(1-y)\log(1-x)P_{N_i,N_j}(x,y)\mathrm{d}y\mathrm{d}x,
\end{align}
where we can apply $P_{N_i,N_j}(x,y) = P_M((k-2+x+y)/k)$.
The second moment requires some extra steps, yet the essence remains the same: 
\begin{align*}
        \mathbb{E}[D_i^2] &= \mathbb{E}\bigg[\frac{2}{(N_i+1)(N_i+2)}\sum_{i=1}^{N_i+1}\frac{1}{i}\int_{x=0}^1 \frac{1-x^{i}}{1-x}\mathrm{d}x\bigg]\\
        &= \mathbb{E}\bigg[\frac{2}{(N_i+1)(N_i+2)}\int_{x=0}^1 \frac{1}{1-x}\int_{y=x}^1 \frac{1-y^{N_i+1}}{1-y}\mathrm{d}y\mathrm{d}x\bigg]\\
         &=-2\mathbb{E}\bigg[\frac{1}{(N_i+1)(N_i+2)}\int_{y=0}^1 \frac{1-y^{N_i+1}}{1-y}\log(1-y)\mathrm{d}y\bigg].
\end{align*}
We can write $(1-y^{N_i+1})/(N_i+1)$ as an integral, which gives:
\begin{align*}
        \mathbb{E}[D_i^2]      &=-2\mathbb{E}\bigg[\frac{1}{N_i+2}\int_{y=0}^1 \frac{1}{1-y}\log(1-y)\int_{z=y}^1 z^{N_i}\dx z \dx y\bigg]\\
        &= -2\mathbb{E}\bigg[\int_{z=0}^1 \frac{1}{z^2}\frac{z^{N_i+2}}{N_i+2}\int_{y=0}^z \frac{\log(1-y)}{1-y} \dx y\dx z\bigg]\\
        &=\mathbb{E}\bigg[\int_{z=0}^1 \frac{1}{z^2}\bigg\{\int_{u=0}^zu^{N_i+1}\dx u\bigg\}\log^2(1-z)\dx z\bigg].
\end{align*}
For the last step we interchange the order of the integrals once more:
\begin{align}
\label{eq:PRE_di2}
        \mathbb{E}[D_i^2]  &=\mathbb{E}\bigg[\int_{u=0}^1 u^{N_i+1}\int_{z=u}^1\frac{\log^2(1-z)}{z^2}\dx z\dx u\bigg]\\
        &=\int_{u=0}^1 uP_{N_i}(u)\int_{z=u}^1\frac{\log^2(1-z)}{z^2}\dx z\dx u. \nonumber\qedhere
\end{align}

\end{proof}

\subsection{Section \ref{sec:Multinomial}: Classical occupancy problem} \label{app:Multinomial}
\begin{proof}[Proof of Lemma \ref{lemma:sshaped1}]
\begingroup\allowdisplaybreaks
Given $M=m$, we have, see e.g. \citet{Johnson1977},
\begin{align}
    &\mathbb{P}\Big(\sum_{i=1}^k I_i = j \, | \, M=m\Big) = \binom{k}{j}\frac{1}{k^m}\sum_{l=0}^{j} (-1)^{j-l}\binom{j}{l}l^m,\nonumber\\
    \label{eq:PRE_istarCond}
    &\mathbb{E}\Big(\sum_{i=1}^k I_i \, | \, M=m\Big) = k - k\Big(1-\frac{1}{k}\Big)^m,\\
    &\mathbb{V}ar\Big(\sum_{i=1}^k I_i \, | \, M=m\Big) = k(k-1)\Big(1-\frac{2}{k}\Big)^{m} + k\Big(1-\frac{1}{k}\Big)^m - k^2\Big(1-\frac{1}{k}\Big)^{2m}\nonumber.
\end{align}
The first and second statement of the lemma now simply follow from deconditioning w.r.t. to $M$. For the third statement of the lemma we use that:
\begin{align*}
     \mathbb{E}\Big[\Big(\sum_{i=1}^k I_i\Big)^2 \, | \, M=m\Big] &= \mathbb{V}ar\Big(\sum_{i=1}^k I_i \, | \, M=m\Big) + \Big[\mathbb{E}\Big(\sum_{i=1}^k I_i \, | \, M=m\Big)\Big]^2,
\end{align*}
and then decondition w.r.t. $M$. 
\endgroup
\end{proof}

\begin{proof}[Proof of Corollary \ref{cor:PRE_corSshaped}]
The proof of the statement follows from \eqref{eq:istarprobJacques} and the fact that each set $\I$ containing $j$ elements is equally likely and that there are $\binom{k}{j}$ possible sets with size $j$. 
\end{proof}

\begin{proof}[Proof of Lemma \ref{lemma:Iodd}]
By \eqref{eq:istarprobJacques} we have that:
\begin{align*}
    \mathbb{E}\big[I_{\text{odd}}\big] =  \sum_{j=1, j \,\text{odd}}^{k}\binom{k}{j}\sum_{l=0}^j \binom{j}{l}(-1)^{j-l}P_M\Big(\frac{l}{k}\Big),
\end{align*}
where we can interchange the orders of the sum and find:
\begin{align*}
    \mathbb{E}\big[I_{\text{odd}}\big] = \mathbb{E}\big[I_{\text{odd}}^2\big] =  \sum_{l=0}^k \binom{k}{l}P_M\Big(\frac{l}{k}\Big) \sum_{j=l, j \,\text{odd}}^{k} \frac{(k-l)!}{(j-l)!(k-j)!}(-1)^{j-l}.
\end{align*}
We remark that the inner sum is empty (and thus $0$) for $l = k$ and $k$ even, and that the sum is known for $l < k$ by \citet{Riordan1958}[page 9], which states:
\begin{align}
    \label{eq:oddsum}
    \sum_{j = 1, j\text{ odd}}^k \binom{k}{j} = \sum_{j = 0, j\text{ even}}^k \binom{k}{j} = 2^{k-1}\quad for: k \geq 1.
 \end{align}
Shifting the indices in the sum therefore results in:
\begin{align*}
    \mathbb{E}\big[I_{\text{odd}}\big] &=\mathbb{E}\big[I_{\text{odd}}^2\big] =  \sum_{l=0}^{k-1} \binom{k}{l}(-1)^{l+1}2^{k-l-1}P_M\Big(\frac{l}{k}\Big) + \mathbbm{1}\{k \text{ odd}\}. \qedhere
\end{align*}
\end{proof}

\begin{proof}[Proof of Lemma \ref{lemma:SshapedPGF}]
Firstly remark that due to the interchangeability of aisles:
\begin{align*}
    \mathbb{E}\Big[z^{N_i}\mathbbm{1}\big\{\I = \{1,2,...,j\}\big\}\Big] =  \mathbb{E}\Big[z^{N_j}\mathbbm{1}\big\{\I = \{1,2,...,j\}\big\}\Big].
\end{align*}
We now overestimate the expectation on the RHS by instead considering the condition $k^+ = j$. By doing so we also consider cases in which the set $\I$ does not have size $j$, hence we have to correct for this. We do this by first subtracting all cases where $k^+ = j$ \emph{and} one extra aisle is empty, for this we remark due to symmetry:
\begin{align*}
    \mathbb{E}\Big[z^{N_j}\mathbbm{1}\big\{k^+ = j, \text{aisle } j-1 \text{ empty}\big\}\Big] = \mathbb{E}\Big[z^{N_{j-1}}\mathbbm{1}\big\{k^+ = j-1\big\}\Big].
\end{align*}
By doing this, we however subtract too much. In fact we subtract some events twice, for instance the event in which $k^+ = j$ and both aisles $j-1$ and $j-2$ are empty. We thus have to add back these cases once, where we can again use symmetry.
\begin{align*}
    \mathbb{E}\Big[z^{N_j}\mathbbm{1}\big\{k^+ = j, \text{aisles } j-1, j-2 \text{ empty}\big\}\Big] = \mathbb{E}\Big[z^{N_{j-2}}\mathbbm{1}\big\{k^+ = j-2\big\}\Big].
\end{align*}
We need to add back $\binom{j-1}{2}$ such cases, in which we thus have two extra empty aisles.\\
By adding this term we now add too much again, since we now also add the cases with $3$ more aisles empty. Repeating the arguments above ultimately results in:
\begin{align*}
    \mathbb{E}\Big[z^{N_j}\mathbbm{1}\big\{\I = \{1,2,...,j\}\big\}\Big] &= \mathbb{E}\Big[z^{N_{j}}\mathbbm{1}\big\{k^+ = j\big\}\Big] - \binom{j-1}{1} \mathbb{E}\Big[z^{N_{j-1}}\mathbbm{1}\big\{k^+ = j-1\big\}\Big] \\
    &\quad + \binom{j-1}{2} \mathbb{E}\Big[z^{N_{j-2}}\mathbbm{1}\big\{k^+ = j-2\big\}\Big]-...\\
    &=\sum_{l=0}^{j-1}\binom{j-1}{l} (-1)^{j-l}\mathbb{E}\Big[z^{N_{l}}\mathbbm{1}\big\{k^+ = l\big\}\Big].
\end{align*}
The proof can now be finished by substituting \eqref{eq:PRE_PGFkplus} in the formula above.
\end{proof}

\section{Proofs of statements: Section \ref{sec:Results}}\label{app:ProofsRes}
\subsection{Section \ref{sec:Return}: Return routing} \label{app:Return}
\begin{proof}[Proof of Proposition \ref{prop:ReturnCorP}]
Because $M = N_1 + ... + N_k$ and the maximum of $n$ uniform$(0,1)$ random variables has mean $n/(n+1)$, we have that
\begin{align*}
   \mathbb{E}\Big[MA_i\Big] &= \mathbb{E}[M] - \mathbb{E}\Big[M(1-A_i)\Big] = \mathbb{E}[M] - \mathbb{E}\Big[\frac{M}{N_i+1}\Big]\nonumber\\
   &= \mathbb{E}[M] + \mathbb{E}\Big[\frac{1}{N_i+1}-1\Big] - \sum_{j=1, j \neq i}^k \mathbb{E}\Big[\frac{N_j}{N_i+1}\Big]. 
   \end{align*}
Next, we use
\begin{align*}
    \mathbb{E}\Big[\frac{1}{N_i+1}\Big] = \int_{x=0}^1 P_{N_i}(x) \dx x = \int_{x=0}^1 P_M\Big(1 - \frac{1}{k} + \frac{x}{k}\Big) \dx x,
\end{align*}
and 
\begin{align*}
    \mathbb{E}\Big[\frac{N_j}{N_i+1}\Big] &= \frac{\dx}{\dx x} \Big(  \int_{y=0}^1 P_{N_j,N_i}(x,y)\dx y\Big)\Big\vert_{x=1} = \frac{\dx}{\dx x} \Big(  \int_{y=0}^1 P_M\Big(1 - \frac{2}{k} + \frac{x+y}{k} \Big)\dx y\Big)\Big\vert_{x=1} \nonumber \\
    &= \frac{1}{k} \int_{y=0}^1 P'_{M}\Big(1 - \frac{1}{k} + \frac{y}{k}\Big) \dx y = 1 - P_M\Big(1 - \frac{1}{k}\Big),
    \nonumber 
\end{align*}
to prove the first statement. Secondly, we use the fact that $A_i=0$ if $k^+=j < i$, and using Lemma \ref{lemma:Interact-A_i/k^+} we obtain
\begin{align}
\label{eq:interact-A_i/k^+}
    \mathbb{E}[A_ik^+] &= \sum_{j=i}^k j\mathbb{E}[A_i\mathbbm{1}\{k^+=j\}] = \sum_{j=i}^k j\bigg\{\mathbb{P}(k^+ = j) -\int_{x=0}^1 \mathbb{E}[x^{N_i}\mathbbm{1}\{k^+ = j\}]\dx x\bigg\}.
\end{align}
The statement now follows from substituting \eqref{eq:PRE_kplusprob} and \eqref{eq:PRE_PGFkplus} in \eqref{eq:interact-A_i/k^+}, ultimately resulting in
\eqref{eq:ReturnCorShortcut}.
\end{proof}

\subsection{Section \ref{sec:Midpoint}: Midpoint routing} \label{app:Midpont}
\begin{proof}[Proof of Lemma \ref{lemma:ConditionalMomentsAiMidpoint}]
The statements about the moments of $A_i^f$ follow immediately from the reasoning in Lemma \ref{lemma:Interact-A_i/k^+}, since:
\begin{align*}
    \mathbb{E}[A_i^f\mathbbm{1}\{k^+ = j, k^- = l\}] = \mathbb{P}(k^+ = j, k^- = l) - \int_{x=0}^1\mathbb{E}\Big[x^{N_i^f}\mathbbm{1}\{k^+ = j, k^- = l\}\Big]\dx x.
\end{align*}
For the interaction with $N_m^b$ we use the same reasoning as in Proposition \ref{prop:ReturnCorP}.
\begin{align*}
     \mathbb{E}\Big[N_m^bA_i^f\mathbbm{1}\{k^+ = j, k^- = l\}\Big] &= \mathbb{E}\Big[\frac{N_i^fN_m^b}{N_i^f+1}\mathbbm{1}\{k^+ = j, k^- = l\}\Big].
\end{align*}
For $m \neq i$ this results in:
\begin{align*}
      \mathbb{E}\Big[N_m^bA_i^f\mathbbm{1}\{k^+ = j, k^- = l\}\Big] &=\mathbb{E}\bigg[\Big(N_m^b-\frac{N_m^b}{N_i^f+1}\Big)\mathbbm{1}\{k^+ = j, k^- = l\}\bigg]\\
     &=\hat{P}_{N_m^b}'(1;j,l) - \int_{z=0}^1 \frac{\dx}{\dx x}\hat{P}_{N_i^f,N_m^b}(z,x;j,l)\bigg\vert_{x=1}\dx z.
\end{align*}
We now remark that \[\int_{z=0}^{1} \frac{\dx}{\dx x}\hat{P}_{N_i^f,N_m^b}(z,x;j,l)\bigg\vert_{x=1}\dx z = \hat{P}_{N_m^b}(1;j,l)-\hat{P}_{N_m^b}(0;j,l).\] One can use this to prove the first statement in the lemma. For the interaction with the number of items in the same half aisle we find:
\begin{align*}
      \mathbb{E}\Big[N_i^fA_i^f&\mathbbm{1}\{k^+ = j, k^- =l\}\Big] =\mathbb{E}\bigg[\Big(N_i^f-1+\frac{1}{N_i^f+1}\Big)\mathbbm{1}\{k^+ = j, k^- =l\}\bigg]\\
     &=\hat{P}_{N_i^f}'(1;j,l) - \hat{P}_{N_i^f}(1;j,l) + \int_{z=0}^1 \hat{P}_{N_i^f}(z;j,l)\dx z.\qedhere
\end{align*}
\end{proof}
\begin{proof}[Proof of Proposition \ref{prop:MidpointLargeProp}]
\begingroup \allowdisplaybreaks
We use that the function $\hat{P}_{N_i}(z;j,l)$ is the same for all $l < i < j$ and solely depends on the difference between $l$ and $j$ if $l < i < j$. Therefore, the expectations in Lemma \ref{lemma:ConditionalMomentsAiMidpoint} only depend on the difference $j-l$. Conditioning on this difference and using that a difference of $m$ can happen at $k-m$ places, we find \eqref{eq:MP_expNiAi}.
\begin{align*}
     \mathbb{E}\Big[\sum_{i=k^-+1}^{k^+-1}A_i^f\Big] &=\sum_{l=1}^k\sum_{j=1}^l  \mathbb{E}\Big[\sum_{i=k^-+1}^{k^+-1}A_i^f\mathbbm{1}\{k^+ = j, k^- = l\}\Big]\\
     &=\sum_{m=2}^{k-1}(k-m)(m-1)\mathbb{E}\Big[A_i^f\mathbbm{1}\{k^+ = m+1, k^- = 1\}\Big].
\end{align*}
For \eqref{eq:MidpointSecA} we use the same methods, where we realize that there are $(2m-2)$ terms with squares and $(2m-2)(2m-3)$ interaction terms between aisles:
\begin{align*}
    \mathbb{E}\Big[(S^f+S^b)^2\Big] &= \sum_{m=2}^{l-1}(2m-2)(k-m)\mathbb{E}\Big[\big(A_2^f\big)^2\mathbbm{1}\{k^+ = 1+m, k^- = 1\}\Big]\\
    &+ \sum_{m=2}^{l-1}(2m-2)(2m-3)(k-m)\mathbb{E}\Big[\big(A_2^fA_2^b\big)\mathbbm{1}\{k^+ = 1+m, k^- = 1\}\Big].
\end{align*}
For the interaction term with $k^+$ we use a similar reasoning, where we also sum over $k+$.
\begin{align*}
    \mathbb{E}\Big[k^+\sum_{i=k^-+1}^{k^+-1}A_i^f\Big]&= \sum_{m=2}^{k-1}(m-1)\sum_{j=m+1}^{k}j\mathbb{E}\big[A_{j-1}^f\mathbbm{1}\{k^+ = j, k^- = j-m\}\big].
\end{align*}
We now use the fact that this expectation is the same for all values of $j$, due to properties of $\hat{P}_{N_i}(z;j,l)$:
\begin{align*}
     \mathbb{E}\Big[k^+\sum_{i=k^-+1}^{k^+-1}A_i^f\Big]&=\sum_{m=2}^{k-1}\bigg\{(m-1)\mathbb{E}\big[A_{2}^f\mathbbm{1}\{ k^+ = m+1, k^- = 1\}\big]\sum_{j=m+1}^{k}j\bigg\},
\end{align*}
lastly, since the inner sum is simply a sum of consecutive integers we find:
\begin{align*}
     \mathbb{E}\Big[k^+\sum_{i=k^-+1}^{k^+-1}A_i^f\Big]=\sum_{m=2}^{j-1}\bigg\{\frac{1}{2}(k-m)(k+m+1)(m-1)\mathbb{E}\big[A_{2}^f\mathbbm{1}\{k^+ = m+1, k^- = 1\}\big]\bigg\}.
\end{align*}
To prove \eqref{eq:MidpointAsumP} we again sum over $k^+$ and $k^-$:
\begin{align*}
     \mathbb{E}\Big[M\sum_{i=k^-+1}^{k^+-1}A_i^f\Big]&=\sum_{m=2}^{k-1} (k-m)(m-1)\sum_{j=1}^k\mathbb{E}\Big[(N_j^f+N_j^b)\sum_{i=k^-+1}^{k^+-1}A_i^f\mathbbm{1}\{k^+ = m+1, k^- = 1\}\Big].
\end{align*}
Remark that there are 3 options that give different (positive) results: $j = i$ or $j\neq i, 1 < j < m+1$ or $j\neq i, j=1$ or $j = m+1$.
\begin{align*}
     \mathbb{E}\Big[M\sum_{i=k^-+1}^{k^+-1}A_i^f\Big] &=\sum_{m=2}^{k-1} (k-m)(m-1)(2m-1)\mathbb{E}\Big[A_2^f N_2^b\mathbbm{1}\{k^+ = m+1, k^- = 1\}\Big]\\
    &\quad+ \sum_{m=2}^{k-1} (k-m)(m-1)\mathbb{E}\Big[A_2^fN_2^f\mathbbm{1}\{k^+ = m+1, k^- = 1\}\Big]\\
    &\quad+ \sum_{m=2}^{k-1}4 (k-m)(m-1)\mathbb{E}\Big[A_2^fN_1^f\mathbbm{1}\{k^+ = m+1, k^- = 1\}\Big].\qedhere
\end{align*}
\endgroup
\end{proof}

\subsection{Section \ref{sec:Largest}: Largest gap routing} \label{app:Largest}
\begin{proof}[Proof of Lemma \ref{lemma:LargGapMoments}]
We can prove \eqref{eq:LG_expD}, \eqref{eq:LG_corD} and \eqref{eq:LG_secD} by using \eqref{eq:PRE_di}, \eqref{eq:PRE_didj} and \eqref{eq:PRE_di2} in combination with the reasoning in Lemma \ref{lemma:Interact-A_i/k^+}. For instance, for the first moment we have:
\begin{align*}
    \mathbb{E}[D_i\mathbbm{1}\{k^+ =j, k^- = l\}] &= \mathbb{E}\bigg[-\int_{x=0}^1 \log(1-x)x^{N_i}\mathbbm{1}\{k^+ =j, k^- = l\}\dx x\bigg]\\
    &=-\int_{x=0}^1 \log(1-x)\mathbb{E}[x^{N_i}\mathbbm{1}\{k^+ =j, k^- = l\}\dx x].
\end{align*}
For the interaction of $D_i$ and $N_m$, we first condition on $N_i,N_m$ and use the known expectation of the largest gap:
\begin{align*}
    \mathbb{E}\Big[N_mD_i\mathbbm{1}\{k^+ = j , k^- = l\}\Big] &= \mathbb{E}\Big[\frac{1}{N_i+1}H_{N_i+1}N_m\mathbbm{1}\{k^+ = j , k^- = l\}\Big]\\
    &=\mathbb{E}\Big[-\int_{x=0}^1 \log(1-x)N_mx^{N_i}\mathrm{d}x\mathbbm{1}\{k^+ = j , k^- = l\}\Big]\\
    &=\mathbb{E}\Big[-\int_{x=0}^1 \log(1-x)\frac{\mathrm{d}}{\mathrm{d}y} y^{N_m}\Big\vert_{y=1}x^{N_i}\mathrm{d}x\mathbbm{1}\{k^+ = j , k^- = l\}\Big].
\end{align*}
Interchanging the order of the expectation and integral and using partial integration gives:
\begin{align*}
    \mathbb{E}\Big[N_mD_i\mathbbm{1}\{k^+ = j , k^- = l\}\Big] &= 1-\int_{x=0}^1 \log(1-x)\frac{\dx}{\dx y}\hat{P}_{N_i,N_m}(x,y;j,l)\bigg\vert_{y=1}\dx x\\
    &=\bigg\{
    \begin{aligned}[t]
    &\Big[-\log(1-x)\big(\hat{P}_{N_i}(x;j,l)-\hat{P}_{N_i}(1;j,l)\big)\Big]_{x=0}^1 \\
    &\quad+\int_{x=0}^1 \frac{\hat{P}_{N_i}(1;j,l)-\hat{P}_{N_i}(x;j,l)}{1-x}\dx x\bigg\}
    \end{aligned}\\
    &=\int_{x=0}^1 \frac{\hat{P}_{N_i}(1;j,l)-\hat{P}_{N_i}(x;j,l)}{1-x}\dx x.
\end{align*}
For the correlation with the picking time in the same aisle we instead find:
\begin{align*}
     \mathbb{E}\Big[N_iD_i\mathbbm{1}\{k^+ = j , k^- = l\}\Big] &= \mathbb{E}\Big[\Big(H_{N_i+1}-\frac{1}{N_i+1}H_{N_i+1}\Big)\mathbbm{1}\{k^+ = j , k^- = l\}\Big]\\
     &=\mathbb{E}\bigg[\int_{x=0}^1\Big[\frac{1-x^{N_i+1}}{1-x} +\log(1-x)x^{N_i}\Big]\mathrm{d}x\mathbbm{1}\{k^+ = j , k^- = l\}\bigg].
\end{align*}
Interchanging the expectation and the integral gives:
\begin{align*}
     \mathbb{E}\Big[N_iD_i\mathbbm{1}\{k^+ = j , k^- = l\}\Big]&=\int_{x=0}^1\bigg[\frac{\hat{P}_{N_i}(1;j,l)-x\hat{P}_{N_i}(x;j,l)}{1-x} +\log(1-x)\hat{P}_{N_i}(x;j,l)\bigg]\dx x.
\end{align*}
The proof can now be finished by realizing that: $\mathbb{E}[N_i\mathbbm{1}\{k^+ = j , k^- = l\}] =  \frac{\dx}{\dx x} \hat{P}_{N_i}(x;j,l)\Big\vert_{x=1}$.
\end{proof}

\begin{proof}[Proof of Proposition \ref{prop:LargestGapbigprop}]
The proof is analogous to that of Proposition \ref{prop:MidpointLargeProp}.
\end{proof}

\subsection{Section \ref{sec:Sshaped}: S-shaped routing} \label{app:Sshaped}
\begin{proof}[Proof of Proposition \ref{prop:Sshaped1}.]
Using \eqref{eq:PRE_istarCond}, we have:
\begin{align}
\label{eq:SS_preProp}
    \mathbb{E}[M\sum_{i=1}^{k}I_i] = \mathbb{E}\bigg[kM-kM\Big(\frac{k-1}{k}\Big)^M\bigg] = k\mathbb{E}[M] - k\cdot \frac{k-1}{k}\frac{\dx}{\dx x}\mathbb{E}[x^M]\bigg\vert_{x=(k-1)/k},
\end{align}
hence:
\[
    \mathbb{E}[M\sum_{i=1}^{k}I_i] =k\mathbb{E}[M] - (k-1)P_M'\Big(\frac{k-1}{k}\Big).
\]
For the interaction term with $k^+$, we use \eqref{eq:PRE_kplusprob} combined with \eqref{eq:PRE_PGFkplus} to see that:
\begin{align*}
    \mathbb{E}[I_ik^+] &= \sum_{j=1}^k j\mathbbm{E}[\mathbbm{1}\{N_i > 0, k^+ = j\}]=\sum_{j=i}^k \bigg\{j\mathbb{P}(k^+ = j) - j\mathbbm{E}[0^{N_i}\mathbbm{1}\{k^+ = j\}]\bigg\}\\
    &=- iP_M\Big(\frac{i-1}{k}\Big) +  \sum_{j=i}^k jP_M\Big(\frac{j}{k}\Big)- \sum_{j=i}^{k-1}2(j+1)P_M\Big(\frac{j}{k}\Big) + \sum_{j=i-1}^{k-2}(j+2)P_M\Big(\frac{j}{k}\Big)\\
     &=k - (k+1)P_M\Big(\frac{k-1}{k}\Big) + P_M\Big(\frac{i-1}{k}\Big).
\end{align*}
The statement now follows by realizing that $\mathbb{E}[\sum_{i=1}^k I_i k^+] = \sum_{i=1}^k \mathbb{E}[I_ik^+]$.\\
The interaction terms with $I_\text{odd}$ can be found by using \eqref{eq:istarprobJacques} and summing over all odd values for $\sum I_i$:
\begin{align*}
    \mathbb{E}\Big[I_{\text{odd}}\sum_{i=1}^{k} I_i\Big] &= \sum_{l=1, l \text{ odd}}^{k} l\binom{k}{l}\sum_{m=0}^{l}\binom{l}{m}(-1)^{l-m}P_M\Big(\frac{m}{k}\Big)\\
    &=\sum_{m=0}^{k}\binom{k}{m}P_M\Big(\frac{m}{k}\Big)\sum_{l=m, l \text{ odd}}^{k}l\binom{k-m}{l-m}(-1)^{l-m}.
\end{align*}
Similarly to the result in \cite{Riordan1958}[page 9] we have that:
\begin{align}
\label{eq:oddsum2}
    \sum_{l=m, l \text{ odd}}^{k}l\binom{k-m}{l-m}(-1)^{l-m} = (-1)^{m+1}(k+m)2^{k-m-2}, \quad \text{for } m < k-2.
\end{align}
A proof for this statement follows from the observation:
\begin{alignat*}{2}
l\binom{k-m}{l-m} = \begin{dcases}
(k-m)\binom{k-m-1}{l-m-1} + m\binom{k-m}{l-m}, &\text{for } m < l \leq k;\\
m\binom{k-m}{l-m} , &\text{for } l = m;
\end{dcases}
\end{alignat*} 
We now use \eqref{eq:oddsum} for both terms separately and find:
\begin{align*}
     \mathbb{E}\Big[I_{\text{odd}}\sum_{i=1}^{k} I_i\Big]   &=\sum_{m=0}^{k-2}(-1)^{m+1}\binom{k}{m}P_M\Big(\frac{m}{k}\Big)(k+m)2^{k-m-2} \\
     &\quad+ \sum_{m=k-2}^{k}\binom{k}{m}P_M\Big(\frac{m}{k}\Big)\sum_{l=m, l \text{ odd}}^{k}l\binom{k-m}{l-m}(-1)^{l-m},
\end{align*}
we can now conclude the proof of \eqref{eq:SS_IIodd} by considering both $k$ odd and even separately.\\
For the interaction terms of $\I_\text{odd}$ and $k^+$ we use Equation \eqref{eq:PRE_setIprob} in Corollary \ref{cor:PRE_corSshaped}:
\begin{align*}
     \mathbb{E}\Big[I_{\text{odd}}k^+\Big] &= \sum_{l=1}^k l\sum_{j=1, \, j \text{ odd}}^l \binom{l-1}{j-1}  \mathbb{P}\Big(\I = \{1,2,...,j\}\Big)\\
     &=  \sum_{l=1}^k \sum_{m=0}^l \binom{l}{m}P_M\Big(\frac{m}{k}\Big)\sum_{j=m, \, j \text{ odd}}^l j\binom{l-m}{j-m} (-1)^{j-m},
\end{align*}
where we interchanged the sums and used that $l\binom{l-1}{j-1} = j\binom{l}{j}$. We can now use \eqref{eq:oddsum2}:
\begin{align*}
     \mathbb{E}\Big[I_{\text{odd}}k^+\Big] &=\sum_{l=2}^k\sum_{m=0}^{l-2}\binom{l}{m}P_M\Big(\frac{m}{k}\Big)\Big[(l+m)(-1)^{m+1}2^{l-m-2}\Big]\\
     &\quad + \sum_{l=1, \, l \text{ odd}}^k\Big[lP_M\Big(\frac{l}{k}\Big)-l^2P_M\Big(\frac{l-1}{k}\Big)\Big]  + \sum_{l=1, \, l \text{ even}}^kl(l-1)P_M\Big(\frac{l-1}{k}\Big).
\end{align*}
Lastly, we prove \eqref{eq:SS_IoddM} by summing over all possible sets $\I$ and find for an arbitrary aisle $i$:
\[
     \mathbb{E}[N_iI_\text{odd}] = \sum_{j=1,j \,\text{odd}}^k \binom{k-1}{j-1}\mathbb{E}\big[N_1\mathbbm{1}\big\{\I = \{1,2,...,j\}\big\}\big],
\]
where the factor $\binom{k-1}{j-1}$ is the number of combinations such that there are $j$ aisles with items, of which aisle $1$ is one. Writing the expectation as the derivative of the probability generating function evaluated at $1$ and applying Lemma \ref{lemma:SshapedPGF} gives:
\begin{align*}
    \mathbb{E}[N_iI_\text{odd}] &= \sum_{j=1,\, j \text{ odd}}^k \binom{k-1}{j-1}\sum_{l=0}^{j-1}\binom{j-1}{l}(-1)^{j-1-l}\frac{1}{k}P_M'\Big(\frac{1+l}{k}\Big)\\
    &=\frac{1}{k}\sum_{l=0}^{k-1}\binom{k-1}{l}P_M'\Big(\frac{1+l}{k}\Big)
    \sum_{j=l+1, j\, \text{odd}}^k \binom{k-l-1}{j-l-1}(-1)^{j-1-l},
\end{align*}
where we can now apply \eqref{eq:oddsum}, yielding:
\begin{align*}
\begin{aligned}
    \mathbb{E}[N_iI_\text{odd}] =\frac{1}{k}\sum_{l=0}^{k-2}\binom{k-1}{l}(-1)^l2^{k-2-l
    }P_M'\Big(\frac{1+l}{k}\Big) + \frac{\mathbb{E}[M]}{k}\mathbbm{1}\{k \text{ is  odd}\}.
\end{aligned}
\end{align*}
\eqref{eq:SS_IoddM} is then found by multiplying using $\mathbb{E}[MI_\text{odd}] = \sum_{i=1}^k \mathbb{E}[N_iI_\text{odd}]$.
\end{proof}

\begin{proof}[Proof of Proposition \ref{prop:Sshaped2}]
The interaction terms with $I_\text{odd}A_{k+}$ all follow from similar methods, we namely sum over the set of aisles in which items have to be picked:
\begin{align*}
     \mathbb{E}[I_\text{odd}A_{k+}] =  \sum_{j=1, j\, \text{odd}}^k \binom{k}{j}\mathbb{E}\Big[A_j\mathbbm{1}\big\{\I = \{1,2,...,j\}\big\}\Big],
\end{align*}
using \eqref{eq:istarprobJacques}, \eqref{eq:SshapedPGF} and \eqref{eq:SS_meanACond} we have:
\begin{align*}
\mathbb{E}[I_\text{odd}A_{k+}] 
      &=\sum_{j=1, j\, \text{odd}}^k \binom{k}{j}\Bigg\{\sum_{l=0}^{j}\binom{j}{l}(-1)^{j-l}P_M\Big(\frac{l}{k}\Big)\\
    &\quad-\int_{z=0}^1\sum_{l=0}^{j-1} \binom{j-1}{l} (-1)^{j-1-l}\bigg\{P_M\Big(\frac{z+l}{k}\Big)-P_M\Big(\frac{l}{k}\Big)\bigg\}\mathrm{d}z\Bigg\}\\
    &= \sum_{j=1, j\, \text{odd}}^k \binom{k}{j}\sum_{l=0}^{j-1} \binom{j-1}{l} (-1)^{j-l}\bigg\{\int_{z=0}^1P_M\Big(\frac{z+l}{k}\Big)\mathrm{d}z - P_M\Big(\frac{l+1}{k}\Big)\bigg\}.\\
\intertext{Similarly the second moment satisfies:}
\mathbb{E}\Big[I_\text{odd}^2A_{k+}^2\Big] 
    &= \sum_{j=1, j\,     \text{odd}}^k \binom{k}{j}\Bigg\{\sum_{l=0}^{j}\binom{j}{l}(-1)^{j-l}P_M\Big(\frac{l}{k}\Big)\\
    &\quad-2\int_{z=0}^1z\sum_{l=0}^{j-1} \binom{j-1}{l} (-1)^{j-1-l}\bigg\{P_M\Big(\frac{z+l}{k}\Big)-P_M\Big(\frac{l}{k}\Big)\bigg\}\mathrm{d}z\Bigg\}\\
    &=\sum_{j=1, j\, \text{odd}}^k \binom{k}{j}\sum_{l=0}^{j-1} \binom{j-1}{l} (-1)^{j-l}\bigg\{2\int_{z=0}^1zP_M\Big(\frac{z+l}{k}\Big)\mathrm{d}z - P_M\Big(\frac{l+1}{k}\Big)\bigg\}.
\end{align*}
The interaction term: $\mathbb{E}[I_\text{odd}A_{k+}\sum_{i} I_i]$ follows from the exact same reasoning. For $\mathbb{E}[I_\text{odd}A_{k+}k^+]$ we sum over both $k^+$ and the number of aisles with items:
\begin{align*}
\mathbb{E}\Big[I_\text{odd}A_{k+}k^+\Big]
    &=\sum_{m=1}^k\sum_{j=1, j\,     \text{odd}}^m m\binom{m-1}{j-1}\Bigg\{\sum_{l=0}^{j}\binom{j}{l}(-1)^{j-l}P_M\Big(\frac{l}{k}\Big)\\
    &\quad-\int_{z=0}^1\sum_{l=0}^{j-1} \binom{j-1}{l} (-1)^{j-1-l}\bigg\{P_M\Big(\frac{z+l}{k}\Big)-P_M\Big(\frac{l}{k}\Big)\bigg\}\mathrm{d}z\Bigg\}\\
     &=\sum_{m=1}^k\sum_{j=1, j\,     \text{odd}}^m m\binom{m-1}{j-1} \sum_{l=0}^{j-1} \binom{j-1}{l} (-1)^{j-l}\bigg\{\int_{z=0}^1P_M\Big(\frac{z+l}{k}\Big)\mathrm{d}z - P_M\Big(\frac{l+1}{k}\Big)\bigg\},\\
\intertext{interchanging the sums now allows us to apply \eqref{eq:oddsum}:}
\mathbb{E}\Big[I_\text{odd}A_{k+}k^+\Big]     &=\sum_{m=1}^k\sum_{l=0}^{m-2}\bigg\{\int_{z=0}^1P_M\Big(\frac{z+l}{k}\Big)\mathrm{d}z - P_M\Big(\frac{l+1}{k}\Big)\bigg\}\binom{m}{l} \bigg[(m-l)\cdot 2^{m-l-2}(-1)^{l+1}\bigg]\\
     &\quad + \sum_{m=1, \, m \text{ odd}}^k m\bigg\{\int_{z=0}^1P_M\Big(\frac{z+m-1}{k}\Big)\mathrm{d}z - P_M\Big(\frac{m}{k}\Big)\bigg\}.
\end{align*}
The derivation of \eqref{eq:SS_IoddAM} can be done by first considering the case of fixed $M= m$, for this we use \eqref{eq:SS_IoddA} as derived above. In this case we have:
\begin{align*}
    \mathbb{E}[MI_\text{odd}A_{k+}\vert M = m] = \sum_{j=1, j\, \text{odd}}^k \binom{k}{j}\sum_{l=0}^{j-1} \binom{j-1}{l} (-1)^{j-l}\bigg\{\int_{z=0}^1m\Big(\frac{z+l}{k}\Big)^m\mathrm{d}z - m\Big(\frac{l+1}{k}\Big)^m\bigg\},
\end{align*}
hence we find the general result by deconditioning w.r.t. $M$ and interchanging the sums and integrals:
\begin{align*}
    \mathbb{E}[MI_\text{odd}A_{k+}] &= \sum_{m=0}^\infty \mathbb{E}[MI_\text{odd}A_{k+}\vert M = m]\mathbb{P}(M = m)\\
    &=\sum_{j=1, j\, \text{odd}}^k \binom{k}{j}\sum_{l=0}^{j-1} \binom{j-1}{l} (-1)^{j-l}\bigg\{\int_{z=0}^1\mathbb{E}\Big[M\Big(\frac{z+l}{k}\Big)^M\Big]\mathrm{d}z - \mathbb{E}\Big[M\Big(\frac{l+1}{k}\Big)^M\Big]\bigg\}.
\end{align*}
Observe now that the expectations can be seen as derivatives of the probability generating function: \[\mathbb{E}\Big[M\Big(\frac{z+l}{k}\Big)^M\Big] = \frac{z+l}{k}\frac{\dx}{\dx x}\mathbb{E}\big[x^M\big]\vert_{x=(z+l)/k};\]
and therefore we conclude that:
\begin{equation*}
     \mathbb{E}[MI_\text{odd}A_{k+}] = \sum_{j=1, j\, \text{odd}}^k \binom{k}{j}\sum_{l=0}^{j-1} \binom{j-1}{l} (-1)^{j-l}\bigg\{\int_{z=0}^1\frac{z+l}{k}P_M'\Big(\frac{z+l}{k}\Big)\mathrm{d}z - \frac{l+1}{k}P_M'\Big(\frac{l+1}{k}\Big)\bigg\}.\qedhere
\end{equation*}

\end{proof}
\endgroup

\end{appendix}
	
\end{document}